%% Template for the submission to:
%%   The Annals of Applied Probability [AAP]
%%
%%%%%%%%%%%%%%%%%%%%%%%%%%%%%%%%%%%%%%%%%%%%%%
%% In this template, the places where you   %%
%% need to fill in your information are     %%
%% indicated by '???'.                      %%
%%                                          %%
%% Please do not use \input{...} to include %%
%% other tex files. Submit your LaTeX       %%
%% manuscript as one .tex document.         %%
%%%%%%%%%%%%%%%%%%%%%%%%%%%%%%%%%%%%%%%%%%%%%%

\documentclass[preprint]{imsart}

%% Packages
\usepackage{amsmath,amssymb}
\usepackage{parskip,csquotes}
\usepackage{marvosym}
\usepackage[hang,small,bf]{caption}
\usepackage{mathtools}
\usepackage{enumitem}
\usepackage[colorlinks]{hyperref}
\hypersetup{
	colorlinks,%
	citecolor=blue,%
	filecolor=black,%
	linkcolor=red,%
	urlcolor=blue
}

\linespread{1}

\RequirePackage[OT1]{fontenc}
\usepackage[top=2.5cm, bottom=2.5cm, left=2.5cm, right=2.5cm]{geometry}

\usepackage{amsthm}
\usepackage{amsmath}
\usepackage{amssymb}
\usepackage{thmtools} % required to make autoref work
\usepackage{subcaption}
\usepackage{textcomp}

\usepackage{graphicx}
\usepackage{verbatim}
\usepackage{array, float}
\usepackage{fontenc}
\usepackage[toc,page]{appendix}
\usepackage[nottoc]{tocbibind}
\usepackage[english]{babel}
\usepackage{marvosym}
%\usepackage{titlesec}
%\titlespacing*{\section}{0pt}{0.6ex plus 0.6ex minus .2ex}{0.6ex plus .2ex}
%\titlespacing*{\subsection}{0pt}{0.6ex plus 0.6ex minus .2ex}{0.6ex plus .2ex}
%\titlespacing*{\section}{0pt}%\titlespacing*{\subsection}{0pt}
\usepackage[pagewise]{lineno}
% \usepackage{lineno}
% \linenumbers
% use \linelabel and \getrefnumber

%\usepackage[numbers]{natbib}
%\usepackage{tikz}
\usepackage{mathtools}
%\mathtoolsset{showonlyrefs}
\allowdisplaybreaks
\sloppy
\usepackage{bbm}
\usepackage[noabbrev,capitalize]{cleveref}

\theoremstyle{exampstyle}

\newtheorem{theorem}{Theorem}
\newtheorem{example}{Example}
\newtheorem{lemma}{Lemma}
\newtheorem{corollary}{Corollary}
\newtheorem{remark}{Remark}

\newtheorem{prop}{Proposition}

\newtheorem{definition}{Definition}

\numberwithin{equation}{section}
\numberwithin{example}{section}
\numberwithin{theorem}{section}
\numberwithin{lemma}{section}
\numberwithin{corollary}{section}
\numberwithin{prop}{section}
\numberwithin{definition}{section}
\numberwithin{remark}{section}

\usepackage{tikz}
\tikzset{
	treenode/.style = {shape=rectangle, rounded corners,
		draw, align=center,
		top color=white, bottom color=blue!20},
	root/.style     = {treenode, font=\Large, bottom color=yellow},
	env/.style      = {treenode, font=\ttfamily\normalsize},
	con/.style      = {treenode, font=\ttfamily, bottom color=green!25},
	nocon/.style    = {treenode, font=\ttfamily, bottom color=red!30},
	dummy/.style    = {circle,draw}
}

%\usepackage{amsthm,amsmath,natbib}
%\RequirePackage[colorlinks,citecolor=blue,urlcolor=blue]{hyperref}

% provide arXiv number if available:
%\arxiv{arXiv:0000.0000}

% put your definitions there:

\startlocaldefs

\newcommand{\eat}[1]{}

% Billy's brackets
% \parens*{xyz} is the same as \left( xyz \right)

% Other math things

\DeclareMathOperator*{\argmax}{\arg\!\max}

\renewcommand{\bar}[1]{\overline{#1}}

\renewcommand{\tilde}[1]{\widetilde{#1}}
\newcommand{\E}{\mathbb{E}}
\renewcommand{\P}{\mathbb{P}}

\newcommand{\BN}{\mathcal{A}_N}

\newcommand{\R}{\mathbb{R}}
\newcommand{\ut}{t}
\newcommand{\tbm}{\tilde{\mathbf{m}}}

\newcommand{\vo}{\mathbf{1}}

\newcommand{\tw}{\tilde{W}}

\@addtoreset{proofpart}{theorem}

\makeatletter
\newcommand*{\rom}[1]{\expandafter\@slowromancap\romannumeral #1@}
\makeatother

\def\argmax{\mathop{\rm argmax}}

% vector

% \newcommand{\LSet}{\vec{L}}

\newcommand{\mmx}{\mathbf{x}}

%\newcommand{:=vec}{\vec{q}}

% statistics
\newcommand{\ms}{\boldsymbol{\sigma}}
\newcommand{\oms}{\overline{\boldsymbol{\sigma}}}
\newcommand{\mm}{\overline{\boldsymbol{m}}(\boldsymbol{\sigma})}
\newcommand{\mq}{\boldsymbol{q}}

\newcommand{\fa}{\lVert A_N\rVert_F^2}

\newcommand{\fd}{\lVert D_N\rVert_F^2}
\newcommand{\ta}{\textbf{(A1)}}

\newcommand{\tg}{\textbf{(A5)}}
\newcommand{\thh}{\textbf{(A6)}}

\newcommand{\mmp}{\mathbbm{P}}
\newcommand{\mmq}{\mathbbm{Q}}

\newcommand{\mme}{\mathbbm{E}}
\newcommand{\mE}{\mathcal{E}}
\newcommand{\tnp}{T_{N,\P}}
\newcommand{\tnpp}{T_{N,\P}'}
\newcommand{\tnq}{T_{N,\mmq}}
\newcommand{\tnqp}{T_{N,\mmq}'}
\newcommand{\mc}{\mathbf{c}}

%

%

% \newcommand{:=Volume}[1]{\Delta^{(#1)}}
%\newcommand{:=Volume}{\Delta}

% estimation

% \newcommand{\LossTwo}{\tilde{\mathcal{L}}}

% indicator, sets

% \newcommand{\Rect}{\mathcal{R}}

% \newcommand{\IncJumpClass}{\rpc_\sparbas}

% \newcommand{:=plain}{Q}
% \newcommand{:=ul}{:=plain_1}
% \newcommand{:=br}{:=plain_2}
% \newcommand{:=ur}{:=plain_3}
% \newcommand{:=bl}{:=plain_4}

% \newcommand{\GapSet}{\mathcal{I}}
% \newcommand{\SumClass}{\mathcal{S}}
% \newcommand{\LClass}{\mathcal{I}}

% Matrix

% misc

\newcommand{\sech}{sech}

\definecolor{LightCyan}{rgb}{0.88,1,1}
\definecolor{Gray}{gray}{0.9}

\endlocaldefs

\usepackage{abstract}
\RequirePackage[numbers]{natbib}
\usepackage{xr}

\begin{document}

\renewcommand{\abstractname}{}    % clear the title
\renewcommand{\absnamepos}{empty}

\begin{frontmatter}
%%%%%%%%%%%%%%%%%%%%%%%%%%%%%%%%%%%%%%%%%%%%%%
%%                                          %%
%% Enter the title of your article here     %%
%%                                          %%
%%%%%%%%%%%%%%%%%%%%%%%%%%%%%%%%%%%%%%%%%%%%%%
\title{Fluctuations in Mean-Field Ising Models}
%\title{A sample article title with some additional note\thanksref{T1}}
\runauthor{Deb and Mukherjee}
\runtitle{Fluctuations in Mean-Field Ising Models}
%\thankstext{T1}{A sample of additional note to the title.}

\begin{aug}
	
	\author{\fnms{Nabarun} \snm{Deb}\ead[label=e1]{nd2560@columbia.edu}}
	\and
	\author{\fnms{Sumit} \snm{Mukherjee} \thanksref{t1}\ead[label=e2]{sm3949@columbia.edu}}
	\thankstext{t1}{Research partially supported by NSF grant DMS-1712037}
	
	%\runauthor{Deb and Mukherjee}
	
	%\affiliation{Columbia University}
	
	%\address{Department of Statistics\\ Stanford University,\\ California, USA,\\ CA-94305\\ \printead{e1}}
	
	%\address{Department of Mathematics and Statistics\\ Stanford University,\\ California, USA,\\ CA-94305\\ \printead{e2}}
	
	\address{Department of Statistics, Columbia University\\ \printead{e1,e2}}
%%%%%%%%%%%%%%%%%%%%%%%%%%%%%%%%%%%%%%%%%%%%%%%
%% Only one address is permitted per author. %%
%% Only division, organization and e-mail is %%
%% included in the address.                  %%
%% Additional information can be included in %%
%% the Acknowledgments section if necessary. %%
%% ORCID can be inserted by command:         %%
%% \orcid{0000-0000-0000-0000}               %%
%%%%%%%%%%%%%%%%%%%%%%%%%%%%%%%%%%%%%%%%%%%%%%%
%\author[A]{\fnms{Nabarun}~\snm{Deb}\ead[label=e1]{nd2560@columbia.edu}}
%\and
%\author[B]{\fnms{Sumit}~\snm{Mukherjee}\ead[label=e2]{sm3949@columbia.edu}}
%%%%%%%%%%%%%%%%%%%%%%%%%%%%%%%%%%%%%%%%%%%%%%
%% Addresses                                %%
%%%%%%%%%%%%%%%%%%%%%%%%%%%%%%%%%%%%%%%%%%%%%%
%\address[A]{Department of Statistics, Columbia University\printead[presep={,\ }]{e1}}

%\address[B]{Department of Statistics, Columbia University\printead[presep={,\ }]{e2}}
\end{aug}

\begin{abstract}
	\vspace{0.1in}
In this paper, we study the fluctuations of the average magnetization in an Ising model on an approximately $d_N$ regular graph $G_N$ on $N$ vertices. In particular, if $G_N$ satisfies a \enquote{spectral gap} condition, we show that whenever $d_N\gg \sqrt{N}$, the fluctuations are universal and same as that of the Curie-Weiss model in the entire Ferromagnetic parameter regime. We give a counterexample to demonstrate that the condition $d_N\gg \sqrt{N}$ is tight, in the sense that the limiting distribution changes if $d_N\sim \sqrt{N}$ except in the high temperature regime. By refining our argument, we extend universality in the high temperature regime up to $d_N\gg N^{1/3}$. Our results conclude universal fluctuations of the average magnetization in Ising models on regular graphs, Erd\H{o}s-R\'enyi graphs (directed and undirected), stochastic block models, and sparse regular graphons. In fact, our results apply to general matrices with non-negative entries, including Ising models on a Wigner matrix, and the block spin Ising model. As a by-product of our proof technique, we obtain Berry-Esseen bounds for these fluctuations, exponential concentration for the average of spins, tight error bounds for the Mean-Field approximation of the partition function, and tail bounds for various statistics of interest.
\end{abstract}

\begin{keyword}[class=MSC]
\kwd[Primary ]{82B20}
\kwd[; secondary ]{82B26}
\end{keyword}

\begin{keyword}
\kwd{Berry-Esseen bound}
\kwd{Ising model}
\kwd{Regular graphs}
\kwd{Mean-Field}
\kwd{Partition function}
\end{keyword}

\end{frontmatter}
%%%%%%%%%%%%%%%%%%%%%%%%%%%%%%%%%%%%%%%%%%%%%%
%% Please use \tableofcontents for articles %%
%% with 50 pages and more                   %%
%%%%%%%%%%%%%%%%%%%%%%%%%%%%%%%%%%%%%%%%%%%%%%
%\tableofcontents

%%%%%%%%%%%%%%%%%%%%%%%%%%%%%%%%%%%%%%%%%%%%%%
%%%% Main text entry area:
\section{Introduction}\label{sec:intro}

The Ising model is a discrete Markov random field which was initially introduced as a mathematical model of Ferromagnetism in Statistical Physics, and has received extensive attention in Probability~(c.f.~\citep{Rados2019,Bresler2019,Cha2011, AmirAndrea2010, Gheissari2018, giardina2015annealed, Lowe2018, Sly2014, kirsch2020two} and references therein)~and Statistics~(c.f.~\cite{Berthet2019, Comets1991, Ellis1978, jain2019mean,Mukherjee2018, mukherjee2019testing,Ravikumar2010} and references therein). % as well as finding applications in Computer Science, spatial modeling, signal processing, and neural networks. 
The model can be described by the following probability mass function in $\ms:=(\sigma_1,\cdots,\sigma_N)\in \{-1,1\}^N$:% $\ms^{\top}:= (\sigma_1,\sigma_2,\ldots ,\sigma_N)$ is a vector of dependent $\pm 1$ valued random variables where the dependence is modeled through an exponential family of distributions given by, 
\begin{equation}\label{eq:model}
\mmp(\ms):=\frac{1}{Z_N(\beta,B)}\exp\left(\frac{\beta}{2}\ms^{\top} A_N\ms+B\sum\limits_{i=1}^N \sigma_i\right).
\end{equation}
%for $\mt\in \{-1,1\}^N$, where
Here $A_N$ is a symmetric $N\times N$ matrix with non-negative entries, and has zeroes on its diagonal, and $\beta>0$ and $B\in \R$ are scalar parameters often referred to in the Statistical Physics literature as \textit{inverse temperature} and \textit{external magnetic field} respectively. The factor $Z_N(\beta,B)$ is the normalizing constant/partition function of the model. The most common choice of the coupling matrix $A_N$ is the adjacency matrix of a graph $G_N$ on $N$ vertices, scaled by the average degree $\bar{d}_N:=\frac{1}{N}\sum_{i,j=1}^NG_N(i,j)$. Here and throughout the rest of the paper, we use the notation $G_N$ to denote both a graph and its adjacency matrix.
%
%For example, one can choose $A_N$ to be the (scaled) adjacency matrix of a simple graph with $N$ vertices [cf.~\cite{Cha2011,kabluchko2019fluctuations}]. In Ising model literature, it is also called the \textit{coupling matrix}.
%	\item $\beta$ and $B$ are the parameters of the model, often referred to as \textit{inverse temperature} and \textit{external magnetic field} respectively. We will assume $(\beta,B)\in [0,\infty)\times (-\infty,\infty)$.
%	\item $Z_N(\beta,B)$ is the normalizing constant, which is a function of both $\beta$ and $B$. It is also referred to as the \textit{partition function}. 
%	\item We also assume that we are given one realization from this model [cf.~\cite{Gidas1988}], which is so often the case in real world networks.
%\end{itemize}
A pivotal quantity of interest which has attracted extensive attention in the literature is the average sum of spins/magnetization density, defined by	
\begin{equation*}\label{eq:pivot}
\oms:=\frac{\sum_{i=1}^N \sigma_i}{N}.
\end{equation*}
%It is often referred to as the \textit{global magnetization density}. As defined above, the fluctuations of $\Theta_N$ have close connections to the estimation of $(\beta,B)$ [\cite{ghosal2018joint}], sparse signal detection [\cite{Mukherjee2018,mukherjee2019testing}], understanding phase transition phenomena [\cite{Ellis1978,kabluchko2019fluctuations}], etc. 
The fluctuations for $\oms$ are mostly known for very few choices of the graph $G_N$, including the complete graph (see e.g.,~\cite{Cha2011,Peter2010,Ellis1978}), the directed Erd\H{o}s-R\'enyi graph %on a restricted range for $(\beta\in (0, 1], B=0)$ %[see~\cref{eg:erg}~and~\cref{rem:ergmod}] where fluctuations have been studied only when $B=0$ and $\beta\leq 1$
(see~\cite{Kabluchko2019fluctuations}), sparse Erd\H{o}s-R\'enyi graphs (see~\cite{giardina2015annealed}). In this paper, we focus on studying fluctuations of $\oms$, when $A_N$ is the scaled adjacency matrix of an approximately regular graph $G_N$. The motivation for this work is the recent paper \cite{Basak2017}, where the authors show universal asymptotics of the partition function $Z_N(\beta,B)$ on {\it any} sequence of approximately regular graphs with diverging average degree, which is governed by the Mean-Field prediction formula. % (see \cite[Theorem 1.1]{Basak2017} for an exact statement).
In particular, it follows from \cite[Theorem 2.1]{Basak2017} that the Mean-Field prediction formula is asymptotically universal in the sense that $$\frac{1}{N}\log{Z_N(\beta,B)}\overset{N\to\infty}{\longrightarrow}\sup_{\mathbf{x}\in [-1,1]} \left\{\frac{\beta x^2}{2}+Bx-\frac{1+x}{2}\log{\frac{1+x}{2}}-\frac{1-x}{2}\log{\frac{1-x}{2}}\right\}$$ for any sequence of approximately $d_N$ regular graphs $G_N$ with $d_N\rightarrow\infty$. A natural follow up question is to what extent this universality extends to other properties of such \enquote{Mean-Field} Ising models. In this paper we try to address this question by studying the universal behavior of the statistic $\oms$.

Our main results (see Theorems~\ref{theo:uniq2}~---~\ref{theo:uniq1}) show that $\oms$ exhibits universal fluctuations for a large class of ``approximately regular" graphs with $\bar{d}_N$ diverging ``fast enough", across all parameter regimes for $(\beta,B)$. Our proof techniques yield tight error bounds for the Mean-Field approximation of the partition function (see~\cref{thm:part}), exponential concentration for the average of spins (see~\cref{thm:conc}~and~\cref{cor:expconc}) and tail bounds for various statistics of interest (see~Lemmas~\ref{lem:qformuni}~---~\ref{lem:linfbdgen}). One of our main contributions is that our results hold even if the minimum and maximum eigenvalue of $A_N$ have the same magnitude asymptotically (see~\cref{rem:connected}). Our assumptions on $A_N$ are thus significantly weaker than the expander type assumptions prevalent in the literature. For ease of exposition, in~\cref{sec:pfideas}, we outline our proof techniques in the special case where $G_N$ is regular.

\subsection{Main results}\label{sec:mres}

We begin with a definition which partitions the parameter set $\{(\beta,B):\beta>0, B\in \R\}$ into different domains.
\begin{definition}
	Let \begin{align*}\Theta_{11}:=\{(\beta,0):0<\beta<1\},&\qquad  \Theta_{12}:=\{(\beta,B):\beta>0, B\ne 0\},\\ \Theta_2:=\{(\beta,0):\beta>1\}, &\qquad \Theta_3:=(1,0).\end{align*} Finally, let $\Theta_1:=\Theta_{11}\cup \Theta_{12}$. We will refer to $\Theta_1$ as the uniqueness regime, $\Theta_2$ as the non uniqueness regime, and $\Theta_3$ as the critical point.
	The names of the different regimes are motivated by the next lemma, the proof of  which follows from simple calculus (see for e.g. ~\cite[Page 144, Section 1.1.3]{AmirAndrea2010}).
	\\
	
\end{definition}
\begin{lemma}\label{lem:fixsol}
	Consider the fixed point equation 
	\begin{equation}\label{eq:mainfix}
	\phi(x)=0,\text{ where } \phi(x):=x-\tanh(\beta x+B).
	\end{equation}
	\begin{enumerate}
		\item[(a)] If $(\beta,B)\in \Theta_{11}$, then~\eqref{eq:mainfix} has a unique solution at $t=0$, and $\phi'(0)>0$.
		\item[(b)] If $(\beta,B)\in \Theta_{12}$, then~\eqref{eq:mainfix} has a unique root $t$ with the same sign as that of $B$, and $\phi'(t)>0$.
		\item[(c)] If $(\beta,B)\in \Theta_2$, then \eqref{eq:mainfix} has two non zero roots $\pm t$ of this equation, where $t>0$, and $\phi'(\pm t)>0$.
		\item[(d)] If $(\beta,B)\in \Theta_3$,  then~\eqref{eq:mainfix} has a unique solution at $t=0$, and $\phi'(0)=0$.
	\end{enumerate} 
	% Further, with $t$ defined as above, we have $\beta(1-t^2)\le1$, with equality iff $(\beta,B)\in \Theta_3$.
\end{lemma}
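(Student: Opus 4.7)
\emph{Proof proposal.} My plan is to analyze $\phi$ through its first two derivatives. Direct computation gives $\phi'(x)=1-\beta\operatorname{sech}^2(\beta x+B)$ and $\phi''(x)=2\beta^2\tanh(\beta x+B)\operatorname{sech}^2(\beta x+B)$; since $\operatorname{sech}^2>0$, the sign of $\phi''$ matches that of $\beta x+B$, so $\phi$ is convex on $\{x:\beta x+B\geq 0\}$ and concave on its complement. Using the symmetry $\phi_{(\beta,-B)}(-x)=-\phi_{(\beta,B)}(x)$, I would reduce to $B\geq 0$ throughout; then $\phi$ is convex on $[0,\infty)$, and in fact strictly so whenever $\beta x + B > 0$, since $\operatorname{sech}^2(\beta x+B)$ is strictly decreasing in $x \geq 0$.

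The ``uniformly monotone'' regimes should be direct. If $\beta<1$, the bound $\phi'(x)\geq 1-\beta>0$ makes $\phi$ strictly increasing on $\R$; its unique root, located via $\phi(0)=-\tanh(B)$, sits at $0$ in case (a) and at a point with the sign of $B$ (where automatically $\phi'>0$) in the $\beta<1$ part of (b). If $\beta=1$, then $\phi'(x)=\tanh^2(x+B)$ vanishes only at the isolated point $x=-B$, so $\phi$ remains strictly increasing; this yields (d) (with $\phi'(0)=0$) and the $\beta=1$ part of (b), where the root $t$ shares the sign of $B$, so $t+B\neq 0$ and $\phi'(t)>0$.

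The $\beta>1$ cases are the substantive ones, and rely on strict convexity of $\phi$ on $[0,\infty)$. For (b) with $B>0$, I would restrict to $[0,\infty)$: here $\phi(0)=-\tanh(B)<0$ and $\phi(x)\to\infty$, so at least one positive root exists. If two roots $0<a<b$ existed, strict convexity would give $0=\phi(a)<(1-a/b)\phi(0)+(a/b)\phi(b)=(1-a/b)\phi(0)<0$, a contradiction; so the positive root $t$ is unique. For $\phi'(t)>0$, I would use a supporting-line argument: if $\phi'(t)\leq 0$, then $\phi(0)\geq\phi(t)+\phi'(t)(0-t)\geq 0$, contradicting $\phi(0)<0$.

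For (c), $B=0$ and $\beta>1$: $\phi(0)=0$ with $\phi'(0)=1-\beta<0$, so $\phi$ decreases off the origin. Since $\phi'$ is strictly increasing on $[0,\infty)$ from $1-\beta<0$ up to $1$, it has a unique zero $x_0>0$, so $\phi$ strictly decreases on $[0,x_0]$ to $\phi(x_0)<0$ and then strictly increases to $+\infty$; this forces a unique positive root $t>x_0$, at which $\phi'(t)>0$. The oddness $\phi(-x)=-\phi(x)$ at $B=0$ transfers both the root and the derivative sign to $-t$. The main obstacle overall is that when $\beta>1$ the equation typically has roots of both signs, so no global monotonicity or uniqueness is available; the lemma demands only uniqueness among roots sharing the sign of $B$, and one-sided (strict) convexity on $[0,\infty)$ (for $B\geq 0$) is exactly the tool that delivers this cleanly.
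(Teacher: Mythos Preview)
Your argument is correct and complete. The paper does not supply its own proof of this lemma, simply remarking that it ``follows from simple calculus'' with a citation; your convexity/monotonicity analysis of $\phi$ via $\phi'$ and $\phi''$ is precisely the kind of argument intended, and your reading of part~(b) as asserting uniqueness among roots sharing the sign of $B$ (rather than global uniqueness, which indeed fails for $\beta>1$ and small $|B|$) is the correct one in context.
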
 
We will use $t$ as defined in the above lemma throughout the paper, noting that $t$ does depend on $(\beta,B)$. The following result summarizes the fluctuations of $\oms$ in the Curie-Weiss model (see \cite{Ellis1978}), which is the Ising model on the complete graph.
\begin{lemma}\label{lem:cw_known}
	Suppose $\ms$  is a random vector from the Curie Weiss model $\P^{CW}$ with p.m.f.
	\begin{align}\label{eq:cw_new}
	\P^{CW}(\ms)=\frac{1}{Z_N^{CW}(\beta,B)}\exp\Big(\frac{N\beta}{2}\bar{\ms}^2+B\sum_{i=1}^N\sigma_i\Big).
	\end{align}
	Let $Z_\tau\sim N(0,\tau)$ with $\tau:=\frac{1-t^2}{1-\beta(1-t^2)}$ for $(\beta,B)\notin \Theta_3$, and let $W$ be a continuous random variable with density proportional to $e^{-x^4/12}$. Then the following holds:
	\begin{eqnarray*}%\label{eq:highs}
		\sqrt{N}\Big(\oms-t\Big)\stackrel{d}{\rightarrow}&Z_\tau&\text{ if }(\beta,B)\in \Theta_1,\\
		%\sqrt{N}(\oms-t)\stackrel{d}{\rightarrow}&Z&\text{ if }B\ne 0,\\
		\sqrt{N}\Big(\oms-M(\ms)\Big)\stackrel{d}{\rightarrow}&Z_\tau&\text{ if }(\beta,B)\in \Theta_2,\\
		N^{1/4}\oms\stackrel{d}{\rightarrow}&W&\text{ if }(\beta,B)\in \Theta_3.
	\end{eqnarray*}
	Here $M(\ms)$ is a random variable which equals $t$ if~$\oms\ge 0$, and $-t$ otherwise, whenever $(\beta,B)\in\Theta_2$.
\end{lemma}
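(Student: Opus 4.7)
The plan is to apply the classical Hubbard--Stratonovich transform, which decouples the quadratic interaction $\tfrac{N\beta}{2}\oms^{2}$ at the cost of introducing an auxiliary Gaussian. Concretely, let $W \sim \mathcal{N}\bigl(0, (\beta N)^{-1}\bigr)$ be independent of $\ms$ under $\mmp^{CW}$, and set $x := W + \oms$. A short density computation shows that (i) the marginal of $x$ is proportional to $\exp(N F(x))$, where
\[
F(x) := -\tfrac{\beta x^{2}}{2} + \log\bigl(2\cosh(\beta x + B)\bigr),
\]
and (ii) conditional on $x$, the spins $\sigma_{1},\ldots,\sigma_{N}$ are i.i.d.\ with $\E(\sigma_{i}\mid x) = \tanh(\beta x + B)$. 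This is the structural identity that drives the rest of the argument.

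Next, I would relate $F$ to the fixed-point equation $\phi(x)=0$. Direct calculation gives $F'(x) = -\beta\phi(x)$ and, at any root $t$ of $\phi$, $F''(t) = -\beta + \beta^{2}(1 - t^{2}) = -\beta\,\phi'(t)$ (using $1 - \tanh^{2}(\beta t + B) = 1 - t^{2}$). \Cref{lem:fixsol} then pins down the global geometry of $F$: a unique strict global maximum at $t$ with $F''(t) < 0$ if $(\beta,B)\in\Theta_{1}$; two symmetric strict global maxima at $\pm t$ if $(\beta,B)\in\Theta_{2}$ (thanks to the $B=0$ symmetry); and a degenerate maximum at $0$ with $F(x) = \log 2 - x^{4}/12 + O(x^{6})$ if $(\beta,B)\in\Theta_{3}$, which follows from the Taylor expansion of $\log\cosh$ about $0$.

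The third step is a classical Laplace-method analysis of the density $\propto\exp(NF(x))$. On $\Theta_{1}$ the standard quadratic Laplace approximation gives $\sqrt{N}(x - t)\stackrel{d}{\to}\mathcal{N}(0, -1/F''(t))$; on $\Theta_{2}$, conditioning on $\sign(x)$ yields $\sqrt{N}(x - \sign(x)\,t)\stackrel{d}{\to}\mathcal{N}(0, -1/F''(t))$; on $\Theta_{3}$, the quartic rescaling $x = u/N^{1/4}$ turns $NF(x) - N\log 2$ into $-u^{4}/12 + o(1)$, giving $N^{1/4}x\stackrel{d}{\to} W$ with density $\propto e^{-w^{4}/12}$. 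The one technical hurdle is verifying that the mass assigned by the density to the complement of a small neighborhood of the maxima is negligible; this reduces to the uniform bound $\sup_{|x-t|>\delta} F(x) < F(t)$, which is immediate from the continuity of $F$ and $\lim_{|x|\to\infty} F(x) = -\infty$.

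Finally, I would transfer the limit from $x$ back to $\oms$ via $\oms = x - W$. Since $W\indep\ms$ and $\sqrt{N}W\sim\mathcal{N}(0, 1/\beta)$, characteristic functions factor:
\[
\E\,\exp\bigl(iu\sqrt{N}(\oms - t)\bigr) \;=\; \frac{\E\,\exp\bigl(iu\sqrt{N}(x - t)\bigr)}{\exp\bigl(-u^{2}/(2\beta)\bigr)}.
\]
On $\Theta_{1}$ (respectively $\Theta_{2}$ after conditioning on $\sign(x)$, which equals $M(\ms)/t$ with high probability because $\oms$ concentrates near $\pm t\ne 0$) the right-hand side converges to the characteristic function of $\mathcal{N}(0, \tau)$, since
\[
-\frac{1}{F''(t)} - \frac{1}{\beta} \;=\; \frac{1}{\beta(1 - \beta(1-t^{2}))} - \frac{1}{\beta} \;=\; \frac{1 - t^{2}}{1 - \beta(1 - t^{2})} \;=\; \tau.
\]
On $\Theta_{3}$, $N^{1/4}W = O_{P}(N^{-1/4}) = o_{P}(1)$, so $N^{1/4}\oms$ inherits the limit $W$ from $N^{1/4}x$, completing the proof.
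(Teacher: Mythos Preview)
Your proof is correct and is essentially the classical Ellis--Newman argument. The paper does not give its own proof of this lemma; it is stated with a citation to \cite{Ellis1978}. The Gaussian decoupling you set up in your first step is precisely the paper's Proposition~\ref{prop:CWbasic} (their auxiliary variable $W_N$ is your $x$, and your independent Gaussian $W$ is their $W_N-\oms$), and the Laplace-type localization estimates underlying your third step are the content of Proposition~\ref{prop:CWresmain}. So your argument is fully consonant with the auxiliary tools the paper assembles for the Curie--Weiss model.

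If instead one regards the paper's main fluctuation theorems (Theorems~\ref{theo:uniq2},~\ref{theo:nouniq},~\ref{theo:crit}) as re-deriving this lemma in the complete-graph special case, that route is genuinely different: it proceeds through Chatterjee's exchangeable-pair Stein method and bounds $d_{KS}$ directly, never using the auxiliary density $\exp(NF)$ or characteristic functions. Your Hubbard--Stratonovich approach is shorter and more self-contained for Curie--Weiss, because it exploits that the Hamiltonian depends on $\ms$ only through $\oms$; the Stein-method approach sacrifices this simplicity but extends to general coupling matrices $A_N$ where no such sufficient statistic exists. One minor remark: you use the symbol $W$ both for your auxiliary Gaussian and for the quartic-density limit on $\Theta_3$; renaming one of them would avoid ambiguity.
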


We will now explore to what extent the fluctuations of $\bar{\ms}$ are universal. We need the following notations to state our main results.
\\

\begin{definition}
	\begin{enumerate}
		\item[(i)]
		Given two positive sequences $x_N,y_N$, we use the notation $x_N\lesssim y_N$ to denote the existence of a finite constant $C$ free of $N$, such that $x_N\le C y_N$.
		
		\item[(ii)]Given a symmetric matrix $A_N$, 
		let $
		R_i:=\sum_{j=1}^NA_N(i,j)$ denote the row sums of $A_N$, and let $(\lambda_1(A_N), \cdots,\lambda_N(A_N))$ denote its eigenvalues arranged in decreasing order. Let $\lVert A_N\rVert_F$ and $\lVert A_N\rVert_{\text{op}}$ denote the Frobenius norm and the operator norm of $A_N$ respectively.

		\item[(iii)]Given two real valued random variables $X,Y$, define the Kolmogorov-Smirnov distance between $X$ and $Y$ by
		\[d_{KS}(X,Y):=\sup_{x\in \R}|\P(X\le x)-P(Y\le x)|.\]
	\end{enumerate}
\end{definition}

\begin{theorem}\label{theo:uniq2}
	Suppose that $(\beta,B)\in \Theta_1$. Assume further that the sequence of matrices $A_N$ satisfies the following two conditions:
	%\item[\tz] $\sum_{i=1}^N R_i=N+o(v_N)$.
	\begin{align}\label{eq:A1}
	\max_{1\leq i\leq N}R_i\lesssim 1,\\
	\label{eq:A2}\lim_{N\rightarrow\infty}\lambda_1(A_N)=1.
	\end{align}
	%\item[\tb] $\sum_{i} (R_i-1)^2=o(k_N)$ where $k_N$ will be specified depending on the theorem.
	%\item[\tc] $\lambda_1(A_N)\to 1$ as $N\to\infty$.
	%\item[\td] $\fa=o(r_N)$ where $r_N$ will be specified depending on the theorem.
	If $\ms$ is a random vector from the Ising model \eqref{eq:model}, then we have
	\begin{equation}\label{eq:uniq2main}
	d_{KS}\Big(\sqrt{N}(\oms-t), Z_\tau\Big)\lesssim \frac{1}{\sqrt{N}}\left(\fa+\sum_{i=1}^N (R_i-1)^2+t\bigg|\sum_{i=1}^N (R_i-1)\bigg|\right),
	%\sup\limits_{x\in\mathbb{R}}\Big|\mmp \Bigg(\sqrt{N}(\bar{\sigm-t)\leq x\Big)-\mmp \Big(\sqrt{\frac{1-t^2}{1-\beta(1-t^2)}}\cdot Z\leq x\Big)\Big|.
	\end{equation}
	where $Z_{\tau}$ is defined as in~\cref{lem:cw_known}.
	%By a simple symmetry argument, if $B<0$, the above result holds with $\sle$ instead of $\sg$. In fact, the above theorem also holds for $B=0$ and $\beta<1$ with $\sg$ replaced by $0$.
\end{theorem}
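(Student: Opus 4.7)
The plan is to use Stein's method of exchangeable pairs, following the framework of Chatterjee and Chen--Fang--Shao. Set $W := \sqrt{N}(\oms - t)$ and construct $(\ms, \ms')$ by one step of heat-bath Glauber dynamics: draw a site $I$ uniformly on $[N]$ and resample $\sigma_I'$ from its conditional distribution given $\ms_{-I}$; let $W' := \sqrt{N}(\oms' - t)$. Writing $m_i := (A_N\ms)_i = \sum_j A_N(i,j)\sigma_j$, the local conditional mean is $\E[\sigma_i' \mid \ms] = \tanh(\beta m_i + B)$, so
\[
\E[W - W' \mid \ms] \;=\; \frac{1}{N^{3/2}}\sum_{i=1}^N \bigl(\sigma_i - \tanh(\beta m_i + B)\bigr).
\]

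The next step is to linearize. Lemma \ref{lem:fixsol} gives $\tanh(\beta t + B) = t$, so Taylor's theorem yields $\tanh(\beta m_i + B) = t + \beta(1-t^2)(m_i - t) + r_i$ with $|r_i| \le C_\beta (m_i - t)^2$. Using the identity $\sum_i m_i = \sum_j R_j \sigma_j$ and rearranging,
\[
\E[W - W' \mid \ms] \;=\; \lambda W + E(\ms), \qquad \lambda := \frac{1 - \beta(1 - t^2)}{N},
\]
where $E(\ms)$ splits into three explicit pieces: $E_1 = \frac{\beta(1-t^2)}{N^{3/2}}\sum_j (R_j-1)(\sigma_j - t)$, $E_2 = \frac{\beta(1-t^2) t}{N^{3/2}}\sum_j (R_j - 1)$, and $E_3 = -\frac{1}{N^{3/2}}\sum_i r_i$. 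In the uniqueness regime $\Theta_1$, Lemma \ref{lem:fixsol} gives $\phi'(t) = 1 - \beta(1-t^2) > 0$, so $\lambda > 0$ as required by the method.

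Now apply the Chen--Fang--Shao quantitative Berry--Esseen theorem for exchangeable pairs, which bounds $d_{KS}(W, Z_\tau)$ by $\lambda^{-1}$ times the sum of $\E|E(\ms)|$, $\sqrt{\Var \E[(W - W')^2 \mid \ms]}$, $|\E\E[(W-W')^2\mid\ms] - 2\lambda\tau|$, and $\E|W - W'|^3$. The cubic-moment term is $O(N^{-3/2})$ since $|W-W'| \le 2/\sqrt N$. For the conditional second moment, $\sigma_i^2 = (\sigma_i')^2 = 1$ reduces it to $\frac{2}{N^2}\sum_i(1 - \sigma_i \tanh(\beta m_i + B))$, whose mean tends to $2(1-t^2)/N = 2\lambda\tau$ and whose variance is $O(N^{-3})$ under exponential concentration of $\oms$ at $t$ in $\Theta_1$. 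For $E(\ms)$, decomposing $m_i - t = \sum_j A_N(i,j)(\sigma_j - t) + (R_i-1)t$ leads to
\[
\E \sum_i (m_i - t)^2 \;\lesssim\; \E\bigl[(\ms - t\vo)^\top A_N^2 (\ms - t\vo)\bigr] + t^2 \sum_i (R_i - 1)^2 \;\lesssim\; \fa + t^2 \sum_i (R_i - 1)^2,
\]
using near-decorrelation of spins around $t$. Pieces $E_1$ and $E_2$ contribute the linear row-sum terms $\sum_j(R_j-1)^2$ and $t|\sum_j (R_j-1)|$ respectively (the former after a Cauchy--Schwarz combined with the row-sum bound \eqref{eq:A1}), and $E_3$ contributes $\fa$. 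After dividing by $\lambda = \Theta(1/N)$, the estimate \eqref{eq:uniq2main} follows.

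The main obstacle is the quadratic-form estimate $\E\bigl[(\ms - t\vo)^\top A_N^2(\ms - t\vo)\bigr] \lesssim \fa$, a uniform decorrelation statement asserting that $\E[(\sigma_j - t)(\sigma_k - t)] \approx (1-t^2)\ind(j=k)$ up to an error suitably controlled by the off-diagonal entries of $A_N$. This requires a quantitative concentration of $\oms$ at $t$, and both hypotheses \eqref{eq:A1}--\eqref{eq:A2} play essential roles here: the row-sum bound \eqref{eq:A1} permits a Dobrushin-type or log-Sobolev approach with uniformly bounded couplings, while $\lambda_1(A_N) \to 1$ identifies the correct Curie--Weiss fixed point $t$ and matches the limiting variance $\tau$. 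Once this decorrelation estimate is in place, the Taylor, variance, and conditional-moment calculations feed systematically into the Chen--Fang--Shao framework.
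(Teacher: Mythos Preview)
Your overall strategy matches the paper's: Stein's method with the heat-bath exchangeable pair, the Taylor linearization of $\tanh(\beta m_i+B)$ around $t$, and the same three-piece decomposition of the remainder (the paper labels them $H_1,H_2,H_3$ and applies \cite[Theorem~1.2]{Cha2011} rather than Chen--Fang--Shao, a minor difference). The structure is correct.

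The gap is exactly where you place it, and your proposed fix does not work. In $\Theta_{12}$ one has $B\ne 0$ but $\beta$ is unconstrained, so $\beta>1$ is permitted; the Dobrushin condition $\beta\max_i R_i<1$ then fails, and the standard spectral-gap or log-Sobolev bounds for Glauber dynamics degenerate once $\beta\lVert A_N\rVert_{\text{op}}\ge 1$. The paper's argument avoids correlation decay entirely. It compares $\P$ to the \emph{product} measure $\mmq$ of \eqref{eq:realdist1} by H\"older, and controls the Radon--Nikodym factor via a Hanson--Wright inequality for centered $\pm 1$ variables (Lemma~\ref{lem:tailsubG}). The key observation is that throughout $\Theta_1$ the sub-Gaussian norm $s_t=t/\tanh^{-1}(t)$ of $\sigma_i-t$ satisfies $s_t\beta=\beta t/(\beta t+B)<1$, so Hanson--Wright applies to the quadratic form with matrix $\beta A_N$ even when $\beta>1$. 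This gives the full exponential moment $\log\E\exp\big(\tfrac{\delta}{2}\sum_i(m_i-t)^2\big)\lesssim \fa+t^2\sum_i(R_i-1)^2$ (Lemma~\ref{lem:qformuni}(a)), and simultaneously the partition-function estimate (Theorem~\ref{thm:part}(a)) that closes the H\"older comparison.

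Two smaller points. First, you will also need the a priori bound $\E W^2\lesssim 1$ (the paper's \eqref{lem:secmomoth_2}), whose proof requires the \emph{second} moment of $\sum_i(m_i-t)^2$, not just the first; the exponential moment bound is what delivers this for free. Second, the variance of $\E[(W-W')^2\mid\ms]$ is not handled by concentration of $\oms$ alone: the paper splits $\sum_i\sigma_i\tanh(\beta m_i+B)$ into $\sum_i(\sigma_i-\tanh(\beta m_i+B))\tanh(\beta m_i+B)$ plus a remainder, and the first piece is controlled by a separate second-moment estimate (Lemma~\ref{lem:auxtail}(a)) coming from an exchangeable-pair martingale argument rather than from concentration of $\oms$.
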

%Thus  \eqref{eq:uniques} holds, as soon as the right hand side of \eqref{eq:uniq2main} converges to $0$. Using this result, it follows that \eqref{eq:uniques} holds for any sequence of $d_N$ regular graphs with $d_N\gg \sqrt{N}$ (see section \ref{sec:exmp}). 
Note that Theorem \ref{theo:uniq2} leaves out the parameter regime $\Theta_2\cup \Theta_3$. The following example shows that such a universal behavior is not expected in this parameter regime, unless we assume some notion of connectivity for $A_N$.
\\

\begin{example}\label{eg:connectedness}
	With $N$ even, let $A_N$ be the adjacency matrix of two disjoint complete graphs $K_{N/2}$, scaled by $N/2$. Then the following holds:
	\begin{enumerate}
		\item[(a)]
		If $(\beta,B)\in \Theta_2$, then
		$\oms\stackrel{d}{\rightarrow}\frac{1}{2}\delta_0+\frac{1}{4}(\delta_t+\delta_{-t})$.
		
		\item[(b)]
		If $(\beta,B)\in \Theta_3$, then
		$N^{1/4}\overline{\ms}\stackrel{d}{\rightarrow}(W_1+W_2)/2^{3/4}$, where $W_1,W_2$ are i.i.d. with the same distribution as that of $W$, with $W$ defined as in~\cref{lem:cw_known}.
	\end{enumerate}
	
\end{example}
The above example shows that if we want universal fluctuations in the regimes $\Theta_2\cup \Theta_3$, the matrix $A_N$ needs to be \enquote{connected} in some asymptotic sense. If $A_N$ is exactly the adjacency matrix of a $d_N$ regular graph $G_N$ scaled by $d_N$, then $\lambda_1(A_N)=1$, and it is easy to check that the graph $G_N$ is connected iff there is a spectral gap, i.e., $\lambda_2(A_N)<1$. Motivated by this, we propose the following asymptotic notion of a spectral gap.
\\

%In~\cref{eg:connectedness}, we show that the above assumptions are not enough to yield universal fluctuations for $\Theta_N$ when $B=0$ and 
\begin{definition}%$\beta\geq 1$. A notion of ``well-connectedness" is of essence in that regime. Therefore, we add in the following assumption:
	We say a sequence of symmetric matrices $\{A_N\}_{N\ge 1}$ with non- negative entries satisfies the spectral gap condition, if
	\begin{align}\label{eq:well_connect}
	\limsup\limits_{N\to\infty}\frac{\lambda_2(A_N)}{\lambda_1(A_N)}<1.
	\end{align}
\end{definition}

%It is well-known that the second eigenvalue of the adjacency matrix captures the strength of connectedness in a regular graph. The above assumption aims to mimic the same. It is worth pointing out that~\te~is quite robust, 
We note that assumption \eqref{eq:well_connect} is somewhat weak in the sense that it does not imply connectivity in general. In particular this allows the existence of small disconnected sub-graphs in $G_N$, as shown in the following example.
\begin{example}\label{prop:robseceg}
	Let $G_N$ denote a graph which is the disjoint union of a $d_N$ regular graph $G_{1,N_1}$ on $N_1$ vertices, and an arbitrary graph $G_{2,N_2}$ on $N_2$ vertices, with $N_1+N_2=N$ and $N_2=o(d_N)$. Then the average degree of the whole graph $G_N$ is $\tilde{d}_N=d_N(1+o(1))$. It is easy to check that if $G_{1,N_1}$ satisfies \eqref{eq:well_connect}, then $G_N$ satisfies \eqref{eq:well_connect}, even though $G_N$ is disconnected.
\end{example}
%if $A_N$ denotes the (scaled) adjacency matrix of a simple graph (see~\cref{def:ScaledAdj}) which has a ``smaller" disconnected component compared to a ``larger" connected component, even then the assumption continues to hold (see~\cref{prop:robseceg} for details). 
Under the assumption of a spectral gap, our next result shows universal fluctuations in the non-uniqueness regime.
\begin{theorem}\label{theo:nouniq}

	Suppose that  $(\beta,B)\in \Theta_2$. Assume further that the sequence of matrices $A_N$ satisfies \eqref{eq:A1},\eqref{eq:A2}, and \eqref{eq:well_connect}. %the following two conditions:
	%\item[\tz] $\sum_{i=1}^N R_i=N+o(v_N)$.
	%	\begin{align}\label{eq:A1}
	%	\max_{1\leq i\leq N}R_i\lesssim 1,\\
	%	\label{eq:A2}\lim_{N\rightarrow\infty}\lambda_1(A_N)=1.
	%	\end{align}	
	%\item[\tb] $\sum_{i} (R_i-1)^2=o(k_N)$ where $k_N$ will be specified depending on the theorem.
	%\item[\tc] $\lambda_1(A_N)\to 1$ as $N\to\infty$.
	%\item[\td] $\fa=o(r_N)$ where $r_N$ will be specified depending on the theorem.
	If $\ms$ is a random vector from the Ising model \eqref{eq:model}, then we have
	%Suppose $\ms$ is drawn according to~\eqref{eq:model} with $B=0$,  $\beta>1$ and $A_N$ satisfies~\ta,~\tc~and~\te. Then we have:
	\begin{equation}\label{eq:non_uniq}
	%\sup\limits_{x\in\mathbb{R}}\Big|\mmp \Big(\sqrt{N}(\bar{\sigma}_N-M(\ms))\leq x\Big)-\mmp \Big(\sqrt{\frac{1-t^2}{1-\beta(1-t^2)}}\cdot Z\leq x\Big)\Big
	d_{KS}\Big(\sqrt{N}(\oms-M(\ms)),Z_\tau\Big)\lesssim \frac{1}{\sqrt{N}}\left(\fa+\sum_{i=1}^N (R_i-1)^2+\bigg|\sum_{i=1}^N (R_i-1)\bigg|\right).
	\end{equation}
	where $M(\ms)$ and $Z_{\tau}$ are defined as in Lemma \ref{lem:cw_known}.
	
	%=\so\mmo(T_N\geq 0)-\so\mmo(T_N < 0)$. By symmetry $\mmp (\sqrt{N}(\Theta_N-M(\ms))\leq x)$ in the above display can be replaced by $\mmp (\sqrt{N}(\Theta_N-\ut|\Theta_N>0)\leq x)$.
\end{theorem}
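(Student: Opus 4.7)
The plan is to reduce Theorem~\ref{theo:nouniq} to the uniqueness-regime bound Theorem~\ref{theo:uniq2} via symmetry and a localization-tilting argument. Since $B=0$ in $\Theta_2$, the Ising measure \eqref{eq:model} is invariant under the global spin flip $\ms \mapsto -\ms$, so the law of $\oms$ is symmetric about $0$. Writing $E_\pm := \{\pm \oms > 0\}$, so that $M(\ms) \equiv \pm t$ on $E_\pm$, it suffices by symmetry to bound the Kolmogorov-Smirnov distance between the conditional law of $\sqrt{N}(\oms - t)$ given $E_+$ and the Gaussian $Z_\tau$ by the right-hand side of \eqref{eq:non_uniq}; the analogous bound on $E_-$ follows identically, and the boundary event $\{\oms = 0\}$ (possible only for $N$ even) has exponentially small probability since $\oms$ concentrates near $\pm t$, and hence contributes negligibly on the scale $N^{-1/2}$.

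The first key step is to establish exponential concentration of $\oms$ near $\pm t$ under the well-connectedness assumption \eqref{eq:well_connect}, namely
\begin{equation*}
\P\bigl(\min(|\oms-t|, |\oms+t|) > \delta\bigr) \le C_1 e^{-C_2 N}
\end{equation*}
for any fixed small $\delta > 0$. This is a consequence of the Mean-Field approximation for $Z_N(\beta,0)$ from \cite{Basak2017} combined with the fact that in $\Theta_2$ the Mean-Field free-energy functional $\beta x^2/2 - I(x)$ (with $I$ the Bernoulli rate function on $[-1,1]$) is strictly maximized precisely at $\pm t$, with a macroscopic energy gap to any other value. Well-connectedness is essential here: \eqref{eq:well_connect} rules out the pathological splittings of $A_N$ (as in Example~\ref{eg:connectedness}) in which different spectral components magnetize in opposite directions and thereby produce spurious modes of $\oms$ away from $\pm t$.

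To bound the conditional distance on $E_+$, I would introduce the tilted Ising model $\P_{B_N}$ on the same coupling matrix $A_N$, at inverse temperature $\beta$ and small positive field $B_N \downarrow 0$. For every $B_N > 0$ the parameter $(\beta, B_N)$ lies in $\Theta_{12}$, with unique positive root $t_{B_N} \to t$ and variance $\tau_{B_N} \to \tau$, both bounded away from $0$ and $\infty$ by a compactness/continuity argument. For $B_N$ chosen in the window $N^{-1} \ll B_N \ll N^{-1/2}$, the previous concentration bound yields $\P_{B_N}(E_-) \le e^{-\Omega(N)}$, while the Radon--Nikodym derivative $d\P_{B_N}/d\P \propto \exp\!\bigl(B_N \sum_i \sigma_i\bigr)$ reweights each configuration with $|\oms - t| \lesssim N^{-1/2}$ by the essentially constant factor $e^{N B_N t}(1+o(1))$. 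Consequently, the laws of $\sqrt{N}(\oms - t_{B_N})$ under $\P_{B_N}$ and of $\sqrt{N}(\oms - t)$ under $\P(\cdot \mid E_+)$ agree up to $o(N^{-1/2})$ in Kolmogorov-Smirnov distance. Applying Theorem~\ref{theo:uniq2} to $\P_{B_N}$ with constants uniform in $B_N$ then yields \eqref{eq:non_uniq}; the factor $t$ in \eqref{eq:uniq2main} is absorbed into the $\lesssim$ in \eqref{eq:non_uniq} since $t$ is a positive constant depending only on $\beta$.

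The main obstacle will be making this tilting step quantitative: $B_N$ must be large enough that $\P_{B_N}(E_-)$ is negligible compared to $N^{-1/2}$, yet small enough that the shift $t_{B_N} - t = O(B_N)$ does not corrupt the Gaussian target at the Berry-Esseen scale; simultaneously, one has to verify that the constants inherent in Theorem~\ref{theo:uniq2} are genuinely uniform as $B_N \to 0^+$ inside $\Theta_{12}$ (this reduces to the continuity of $\phi'(t_{B_N})$ and $\tau_{B_N}$ at $B_N=0$, which holds throughout $\Theta_2$). A cleaner alternative that avoids tuning $B_N$ is to repeat the Stein-method or exchangeable-pair computation underlying Theorem~\ref{theo:uniq2} directly on the conditional measure $\P(\cdot \mid E_+)$, treating the hard constraint $\oms > 0$ as a boundary correction of exponentially small order thanks to the concentration step above.
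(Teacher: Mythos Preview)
Your tilting approach has a genuine gap: the implicit constants in Theorem~\ref{theo:uniq2} are \emph{not} uniform as $B_N\to 0^+$ inside $\Theta_{12}$ when $\beta>1$. The problem is not in $\phi'(t_{B_N})$ or $\tau_{B_N}$---those are indeed continuous---but in the comparison of the Ising measure to the i.i.d.\ product measure $\mmq$. That comparison (entering through Lemma~\ref{lem:qformuni}(a) and Theorem~\ref{thm:conc}(a)) rests on the Hanson--Wright bound Lemma~\ref{lem:tailsubG} applied with $D_N=\beta A_N$; the hypothesis there is $s_t\,\beta\,\lambda_1(A_N)<1$, where $s_t=t/\tanh^{-1}(t)=t/(\beta t+B)$. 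As $B\to 0^+$ with $\beta>1$ fixed, $s_t\beta\to 1$ while $\lambda_1(A_N)\to 1$, the spectral gap in Lemma~\ref{lem:tailsubG} closes, and its constant diverges. Equivalently, the H\"older exponent $p$ in the proof of Theorem~\ref{thm:conc}(a) is forced to $0$, rendering that bound vacuous. This is precisely why the paper needs well-connectedness \eqref{eq:well_connect} for $\Theta_2$: it permits replacing $A_N$ by $\BN=A_N-N^{-1}\vo\vo^\top$, whose top eigenvalue is strictly below $1$, reopening the gap (Lemma~\ref{lem:qformuni}(b), Theorem~\ref{thm:conc}(b)). You cannot access this mechanism by tilting into $\Theta_{12}$, because Theorem~\ref{theo:uniq2} as stated neither assumes nor uses \eqref{eq:well_connect}.

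The paper's proof is essentially your ``cleaner alternative'', but without conditioning: it runs the exchangeable-pair Stein argument directly on $T_N=\sqrt{N}(\oms-M(\ms))$ with the \emph{random} centering $M(\ms)\in\{-t,+t\}$. Compared with the $\Theta_1$ computation the only new contribution is the term $\sqrt{N}\,\E[M(\ms)-M(\ms')\mid T_N]$, nonzero only on the event that a single-spin flip changes the sign of $\oms$; this event is shown to have exponentially small probability by the $\Theta_2$-specific comparison to the Curie--Weiss model (Theorem~\ref{thm:conc}(b)), which is where well-connectedness is consumed. So the correct reduction is not ``$\Theta_2\to\Theta_{12}$ on $A_N$'' but ``$\Theta_2$ Ising on $A_N\to\Theta_2$ Curie--Weiss'', and that step genuinely requires \eqref{eq:well_connect}.
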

%As before, it follows  from the above theorem that \eqref{eq:lows} holds for any sequence of regular graphs $G_N$ with $d_N\gg \sqrt{N}$ (c.f.~\ref{sec:expm}).
%Our next theorem deals with the critical regime $\Theta_3$. % and gives the convergence rates in terms of $\ell^\infty$ norm, as opposed to the $\ell^2$ norm used in the previous two theorems. %In both the theorems above, the error bounds are in terms of $\ell^2$ norm of the vector $R:=A_N{\bf 1}-{\bf 1}$. However, in the critical regime $\beta=1, B=0$, because of a cancellation we inherently need to work with $\ell^3$ norm of $R$, which we estimate by the $\ell^\infty$ norm. Thus the error bounds 
%A typical feature of this combination of parameters is that the $\sigma_i$'s are very strongly dependent thereby yielding non-Gaussian limits for $\Theta_N$ as demonstrated below. 
% we will work under the following extra assumptions:
%\begin{itemize}
%	\item[\tf] $\max_{1\leq i\leq N} |R_i-1|\lesssim o(\omega_N)$.
%	\item[\tg] $\max_{1\leq i\leq N}\sum_{j} A_N(i,j)^2\lesssim o(\pi_N)$.
%\end{itemize}
%Detailed comments on the above assumptions can be found in~\cref{rem:supctrl}. Now let us state the the corresponding theorem.
To prove universal fluctuations in the critical regime, we need a stronger notion of regularity on $A_N$, i.e.,
\begin{equation}\label{eq:A4}
\limsup\limits_{N\to\infty} N^{1/4}\max\limits_{1\leq i\leq N} |R_i-1|\lesssim 1.
\end{equation}
\begin{theorem}\label{theo:crit}
	
	Suppose that $(\beta,B)\in \Theta_3$. %Assume further that the sequence of matrices $A_N$ satisfies the following two conditions:
	%\item[\tz] $\sum_{i=1}^N R_i=N+o(v_N)$.
	%	\begin{align}\label{eq:A1}
	%	\max_{1\leq i\leq N}R_i\lesssim 1,\\
	%	\label{eq:A2}\lim_{N\rightarrow\infty}\lambda_1(A_N)=1.
	%	\end{align}
	%\item[\tb] $\sum_{i} (R_i-1)^2=o(k_N)$ where $k_N$ will be specified depending on the theorem.
	%\item[\tc] $\lambda_1(A_N)\to 1$ as $N\to\infty$.
	%\item[\td] $\fa=o(r_N)$ where $r_N$ will be specified depending on the theorem.
	If $\ms$ is a random vector from the Ising model \eqref{eq:model} where $A_N$ satisfies~\eqref{eq:well_connect}~and~\eqref{eq:A4}. Then we have %setting $\alpha_N:=\max_{1\le i\le N}\sum_{j=1}^NA_N(i,j)^2$we have
	%	Suppose $\ms$ is drawn according to~\eqref{eq:model} with $B=0$,  $\beta=1$ and $A_N$ satisfies~\ta,~\te,~\tf~and~\tg~with $\pi_N=N^{-1/2}$ and $\omega_N=N^{-1/4}$. Then we have:
	\begin{comment}
	\begin{equation}
	d_{KS}\Big(N^{1/4}\oms,W\Big)\lesssim \frac{1}{\sqrt{N}}+\frac{(\log N)^2}{N^{1/4}}\Big[\sum_{i=1}^N	
	%N^{1/2}\alpha_N\log N+\Big(\frac{\log N}{\sqrt{N}}\Big)^2\sqrt{N\max_{1\le i\le N}(R_i-1)^2+|\sum_{i=1}^N(R_i-1)|}.
	%\sup\limits_{x\in\mathbb{R}}\Bigg|\mmp \Bigg(N^{1/4}\Theta_N\leq x\Bigg)-\mmp (W\leq x)\Bigg|\lesssim \frac{\sqrt{\log{N}\log{\log{N}}}}{\sqrt{N}}\fa+\frac{(\log{N})^2}{N^{1/4}}\sqrt{\left(\sum_{i=1}^N (R_i-1)^2+\bigg|\sum_{i=1}^N (R_i-1)\bigg|\right)}
	\end{equation}
	\end{comment}
	\begin{equation}\label{eq:critmain}
	d_{KS}\Big(N^{1/4}\oms,W\Big)\lesssim \frac{\varepsilon_N}{\sqrt{N}}+\frac{\varepsilon_Nr_N}{{N^{1/4}}}+\frac{(\log N)^2}{N^{1/4}}\sqrt{\sum_{i=1}^N (R_i-1)^2+N^{-1/2}\Big[\sum_{i=1}^N(R_i-1)\Big]^2},
	%\sup\limits_{x\in\mathbb{R}}\Bigg|\mmp \Bigg(N^{1/4}\Theta_N\leq x\Bigg)-\mmp (W\leq x)\Bigg|\lesssim \frac{\sqrt{\log{N}\log{\log{N}}}}{\sqrt{N}}\fa+\frac{(\log{N})^2}{N^{1/4}}\sqrt{\left(\sum_{i=1}^N (R_i-1)^2+\bigg|\sum_{i=1}^N (R_i-1)\bigg|\right)}
	\end{equation}
	where
	\begin{small}
		\begin{align*}
		r_N:=&\sqrt{(\log{N})^3\max_{1\le i\le N}\sum_{j=1}^NA_N(i,j)^2}+\log N\max\limits_{1\leq i\leq N}|R_i-1|,\\
		\varepsilon_N:=&\fa+\frac{1}{N}\Big[\sum_{i=1}^N (R_i-1)\Big]^2+\frac{1}{N}\sum_{i=1}^N (R_i-1)^2+\log{N},
		\end{align*}
	\end{small}
	and $W$ is as in Lemma \ref{lem:cw_known}. %has density proportional to $\exp\left(-w^4/12\right)$. 	
\end{theorem}

\begin{remark}
	%It follows from the above theorems that for a sequence of well connected  regular matrices (i.e. $R_i\equiv 1$), the condition $\lVert A_N\rVert_F^2\ll \sqrt{N}$ is sufficient for universal fluctuations if $(\beta,B)\in \Theta_1\cup \Theta_2$, and $\max_{1\le i\le N}\sum_{j=1}^NA_N(i,j)^2\ll (\sqrt{N}\log N)^{-1}$ is sufficient for universal fluctuations of $\oms$ if $(\beta,B)\in \Theta_3$. 
	Using these results, in section \ref{sec:exmp} we will show that for any sequence of $d_N$ regular graphs satisfying the spectral gap condition (see~\eqref{eq:well_connect}), the fluctuation of $\oms$ is universal in $\Theta_1\cup \Theta_2$ if $d_N\gg \sqrt{N}$ ,  and in $\Theta_3$ if $d_N\gg \sqrt{N}\log N$. We now give an example to show that the above conditions are actually tight (up to $\log$ factor in the critical regime). The proof of this example will appear in an upcoming draft \cite{Manuel2020}.
\end{remark}

\begin{example}\label{eg:sparsity}
	Let $G_N$ denote the line graph of the complete graph $K_n$, so that $N={n\choose 2}=\frac{n^2}{2}(1+o(1))$. This is a regular graph with degree $d_N=2(n-2)=2\sqrt{2N}(1+o(1))$, and its top two eigenvalues are $\lambda_1(G_N)=2(n-2)$ and $\lambda_2(G_N)=n-2$ (see \cite[Lemma 2]{Chuang2009}). It follows that $A_N=\frac{1}{d_N}G_N$ does satisfy \eqref{eq:well_connect}, and 
	$$\lim_{N\rightarrow\infty}\frac{1}{\sqrt{N}}\lVert A_N\rVert_F^2=\sqrt{N}\max_{1\le i\le N}\sum_{j=1}^NA_N(i,j)^2=\frac{1}{2\sqrt{2}}\ne 0.$$
	%but every other term in the RHS of Theorems \ref{theo:uniq2}, \ref{theo:nonuniq}, \ref{theo:critic
	In this case we have the following limiting distributions across different regimes:
	%$G_{M}^{\mathrm{com}}$ denote the complete graph with $\sqrt{N}$ vertices. Denote the set of edges in $G_{M}^{\mathrm{com}}$ as $S_{M}^{\mathrm{com}}$ and fix an arbitrary enumeration of $S_{M}^{\mathrm{com}}$, say, $\{s_1,s_2,\ldots ,s_L\}$ where $L=M(M-1)/2$. Therefore, $L/N\to 1/2$. Next, define $A_N(i,j):= (2(M-2))^{-1}$ if $s_i$ and $s_j$ share exactly one common neighbor and $0$ otherwise. As defined above, $A_N$ is the \emph{scaled adjacency matrix of the line graph of a complete graph}. In this case, $R_i=1$ for $1\leq i\leq N$. Also $\lambda_2(A_N)\longrightarrow 1/2$; see e.g.,~\cite[Lemma 2]{Chuang2009}. In particular, all the required assumptions for Theorems~\ref{theo:uniq2},~\ref{theo:nouniq}~and~\ref{theo:crit} are satisfied, apart from~\td. In this example, it is easy to verify that $\fa/\sqrt{N}\to 1/2$. In an upcoming paper (\cite{Manuel2020}), the following theorems will be shown:
	\begin{eqnarray*}
		\sqrt{N}(\overline{\ms}_N-t)+\mu\overset{w}{\longrightarrow}&Z_\tau& \mbox{if }(\beta,B)\in \Theta_1,\\
		\sqrt{N}(\overline{\ms}_N-M(\ms))+\mathrm{sgn}(M(\ms))\mu\overset{w}{\longrightarrow}&Z_\tau&\mbox{if }(\beta,B)\in \Theta_2,\\
		N^{1/4}\overline{\ms}_N\overset{w}{\longrightarrow}&\tilde{W}& %\mbox{with density }\propto \exp\left(-c_1 w^4-c_2 w^2\right)\qquad 
		\mbox{ if } (\beta,B)\in \Theta_3,
	\end{eqnarray*}
	where $\mu:=\frac{\beta t}{\sqrt{2}(1-\beta(1-t^2))\cdot (2-\beta (1-t^2))}$ is strictly larger than $0$ if $(\beta,B)\in \Theta_{12}\cup \Theta_2\cup \Theta_3$, and $\tilde{W}$ has density proportional to $\exp(-\frac{w^4}{12}-\frac{w^2}{\sqrt{2}})$. Therefore, the fluctuations do not match that of the Curie-Weiss model unless  $(\beta,B)\in \Theta_{11}$.
	
\end{example}
%Note that the above theorems demonstrate the universal behavior and phase transitions of $T_N$ (the fluctuation) for $r_N=\sqrt{N}$ (up to logarithmic factors only if $B=0$ and $\beta=1$). In general, unless $B=0$ and $\beta<1$, this condition on $r_N$ is tight; see~\cref{eg:sparsity}. However, when $B=0$, $\beta<1$, the above condition is not tight and it is possible to come up with an improved bound, which proceeds under the following assumption:
%\begin{itemize}
%	\item[\thh] For any fixed $\epsilon>0$, we have $\#\{k:|R_k-1|\geq \epsilon\}=o\bigg(\big(\max_{1\leq i\leq N} \sum_j A_N(i,j)^2\big)^{-1}\bigg)$
%\end{itemize}
%which is again a necessary assumption (see~\cref{eg:CWweakreg}). 
Note that in the above example, $\oms$ has a different limit compared to the Curie-Weiss model in $\Theta_{12}\cup \Theta_2\cup \Theta_3$, but continues to have universal fluctuations in the high parameter regime $\Theta_{11}$. We now state a modified theorem for the regime $\Theta_{11}$, which shows that in this regime we can do better.
\\

\begin{theorem}\label{theo:uniq1}
	Suppose that  $(\beta,B)\in \Theta_{11}$, and $A_N$ satisfies
	\begin{align}\label{eq:A3}
	\lim_{N\rightarrow\infty}\max_{1\le i\le N}R_i=1.
	\end{align} %Assume further that the sequence of matrices $A_N$ satisfies \eqref{eq:A1},\eqref{eq:A2}, and \eqref{eq:well_connect}.
	If $\ms$ is a random vector from the Ising model \eqref{eq:model}, then setting $\alpha_N:=\max_{1\le i\le N}\sum_{j=1}^NA_N(i,j)^2$ we have
	% $A_N$ satisfies~\ta,~\tg~and~\thh~with $\pi_N=N^{-1/3}$, . Then we have:
	\begin{equation}\label{eq:uniq1main}
	d_{KS}\Big(\sqrt{N}\oms,Z_\tau\Big)\lesssim \frac{1}{\sqrt{N}}+\frac{\fa\sqrt{\alpha_N\log N}}{\sqrt{N}}+\Big[1+\lVert A_N\rVert_F \alpha_N\log N \Big]\sqrt{\frac{\sum_{i=1}^N (R_i-1)^2}{N}},
	% N^{1/3}\sqrt{\log{N}}\max_{1\le i\le N}\sum_{j=1}^NA_N(i,j)^2+(\log{N})^{3/4}\max_{1\le i\le N}|R_i-1|+N^{-1/2}.
	%\sup\limits_{x\in\mathbb{R}}\big|\mmp \big(\sqrt{N}\Theta_N\leq x\big)-\mmp \big((1-\beta)^{-1/2}\cdot Z\leq x\big)\big|\lesssim N^{-2/3}\sqrt{\log{N}}\fa+\sqrt{\frac{(\log{N})^{3/2}}{N}\sum_{i=1}^N (R_i-1)^2}+N^{-1/2} .
	\end{equation}
	where $Z_{\tau}$ is defined as in~\cref{lem:cw_known}.
\end{theorem}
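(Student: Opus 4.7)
The strategy is Stein's method for normal approximation applied to $W := \sqrt{N}\oms$, refining the argument behind Theorem \ref{theo:uniq2} to exploit (i) the sign-flip symmetry $\ms\leftrightarrow -\ms$ of the Gibbs measure in $\Theta_{11}$ (available since $B=0$) and (ii) Hanson--Wright-type concentration of polynomials in Ising spins when $\beta<1$. With $\tau:=1/(1-\beta)$, I would reduce the problem to bounding
\[
d_{KS}(W,Z_\tau) \lesssim \sup_{g\in\mathcal{G}} \bigl|\mathbb{E}[\tau g'(W) - W g(W)]\bigr|,
\]
where $\mathcal{G}$ is the class of (smoothed) Stein solutions to $\tau g'(w) - wg(w) = \mathbbm{1}_{\{w\le x\}} - \mathbb{P}(Z_\tau\le x)$, for which one has the standard bounds $\|g\|_\infty,\|g'\|_\infty \lesssim 1$.

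To expand the integrand, set $m_i := (A_N\ms)_i$, $U_i := \sigma_i - \tanh(\beta m_i)$ (so that $\mathbb{E}[U_i\mid\ms_{-i}]=0$), and $W^{(i)} := W - \sigma_i/\sqrt N$. Combining the martingale-type identity $\mathbb{E}[U_i g(W)] = \mathbb{E}[U_i(g(W) - g(W^{(i)}))]$ with a second-order Taylor expansion of $g$, the third-order expansion $\tanh(\beta m_i) = \beta m_i - (\beta m_i)^3/3 + O(m_i^5)$ (valid because $|\beta m_i|\le \beta R_i<1$ eventually under \eqref{eq:A3}), and the identity $\sum_i m_i = \sqrt N\,W + \sum_j(R_j-1)\sigma_j$, one obtains after rearrangement
\begin{align*}
\mathbb{E}[\tau g'(W) - W g(W)] = {}&\frac{1}{(1-\beta)N}\mathbb{E}\Bigl[g'(W)\sum_i\tanh^2(\beta m_i)\Bigr] + \frac{\beta^3}{3(1-\beta)\sqrt N}\mathbb{E}\Bigl[g(W)\sum_i m_i^3\Bigr]\\
&{}- \frac{\beta}{(1-\beta)\sqrt N}\mathbb{E}\Bigl[g(W)\sum_j(R_j-1)\sigma_j\Bigr] + \mathcal{R},
\end{align*}
where $\mathcal{R}$ collects the $g''$-Taylor remainder and higher-order $m_i^5$-residuals, both of which (with an appropriately chosen smoothing parameter) one expects to contribute at order $1/\sqrt N$.

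The key new input, compared with Theorem \ref{theo:uniq2}, is a sharp concentration inequality for polynomials in $\ms$ under the Ising measure in $\Theta_{11}$. The uniform spectral gap (log-Sobolev inequality) of Glauber dynamics at $\beta<1$ yields a Hanson--Wright estimate
\[
\mathbb{P}\bigl(|\ms^\top M\ms - \mathbb{E}\ms^\top M\ms|>t\bigr) \lesssim \exp\bigl(-c\min(t^2/\|M\|_F^2,\ t/\|M\|_{\text{op}})\bigr)
\]
for any symmetric $M$, along with analogous deviation bounds for cubic forms. Applied with $M=A_N^2$, combined with $\|A_N^2\|_{\text{op}}\lesssim 1$ (via \eqref{eq:A2}) and a Frobenius-norm bound on $A_N^2$ involving $\alpha_N$, and using the symmetry-based estimate $\mathbb{E}\ms^\top A_N^2\ms \lesssim \|A_N\|_F^2$, this yields
\[
\tfrac{1}{N}\bigl|\mathbb{E}[g'(W)\textstyle\sum_i\tanh^2(\beta m_i)]\bigr| \lesssim \|A_N\|_F^2\sqrt{\alpha_N\log N}/\sqrt N,
\]
which is precisely the second term of \eqref{eq:uniq1main}. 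An analogous cubic concentration bound for $\sum_i m_i^3$, whose mean vanishes by the $\ms\leftrightarrow -\ms$ symmetry, renders the cubic contribution subdominant; and Cauchy--Schwarz applied to $\sum_j(R_j-1)[U_j + \tanh(\beta m_j)]$, combined with the same concentration input on the quadratic piece, delivers the final term $[1 + \|A_N\|_F\alpha_N\log N]\sqrt{\sum_i(R_i-1)^2/N}$.

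The principal obstacle will be producing Hanson--Wright-type bounds for quadratic and cubic forms in Ising spins that are sharp enough in both the Frobenius- and operator-norm parameters (so that $\sqrt{\alpha_N\log N}$ rather than a larger factor enters), and simultaneously ensuring that the smoothing parameter for the Stein solution can be chosen so that the $g''$-error together with the $m_i^5$-remainder is absorbed into the leading $1/\sqrt N$ term. The reflection symmetry is essential at the cubic step: without $\mathbb{E}\sum_i m_i^3=0$, the cubic contribution would dominate and the bound would revert to that of Theorem \ref{theo:uniq2}, losing the improvement from the threshold $\sqrt N$ to $N^{1/3}$.
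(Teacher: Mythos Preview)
Your overall architecture---Stein's method plus a third-order Taylor expansion of $\tanh$ around $0$---is the right one, and it matches the paper's (which implements it via the Chatterjee--Shao exchangeable-pair theorem rather than the direct Stein equation). However, the identification of which term produces each piece of \eqref{eq:uniq1main} is inverted, and the role you assign to the reflection symmetry is not correct.

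The quadratic piece $\tfrac{1}{N}\E[g'(W)\sum_i\tanh^2(\beta m_i)]$ needs no Hanson--Wright input: since $\tanh^2(\beta m_i)\le\beta^2 m_i^2$ and $\E\sum_i m_i^2\lesssim\|A_N\|_F^2$ by Lemma~\ref{lem:qformuni}, this term is $O(\|A_N\|_F^2/N)$, already dominated by the second summand of \eqref{eq:uniq1main}. It is the \emph{cubic} piece $\tfrac{1}{\sqrt N}\E[g(W)\sum_i m_i^3]$ that produces $\|A_N\|_F^2\sqrt{\alpha_N\log N}/\sqrt N$, and the vanishing of $\E\sum_i m_i^3$ under $\ms\leftrightarrow-\ms$ does not render it subdominant: the Stein solution $g$ for an indicator has no parity, so you are still left with $\bigl|\E[g(W)\sum_i m_i^3]\bigr|\le\|g\|_\infty\,\E\sum_i|m_i|^3$, and one can check that $\sqrt{\Var(\sum_i m_i^3)}$ is of the same order. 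The genuine gain over Theorem~\ref{theo:uniq2} is not the Gibbs symmetry but the fact that $t=0$ kills the \emph{second}-order Taylor remainder (because $\tanh$ is odd), forcing the error to start at third order; the paper then bounds $\sum_i|m_i|^3\le\max_i|m_i|\cdot\sum_i m_i^2$ pointwise.

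The factor $\sqrt{\alpha_N\log N}$ is thus not a Frobenius-norm artifact of $A_N^2$ but comes from an $\ell^\infty$ estimate $\max_i|m_i(\ms)|\lesssim\sqrt{\alpha_N\log N}$ (Lemma~\ref{lem:unifbd}(a)), obtained by iterating $m_i\approx\beta\sum_j A_N(i,j)m_j$ together with the contraction $\beta\max_i R_i<1$ from \eqref{eq:A3}. Likewise, your Cauchy--Schwarz treatment of $\sum_j(R_j-1)\sigma_j$, splitting into $U_j+\tanh(\beta m_j)$, yields only the factor $(1+\|A_N\|_F)$, strictly weaker than the target $(1+\|A_N\|_F\alpha_N\log N)$; the paper achieves the latter through a recursive argument (Lemma~\ref{lem:unifbd}(b)) that repeatedly substitutes $\sigma_i\mapsto\tanh(\beta m_i)\approx\beta m_i$ and uses $\|\beta A_N\|_{\mathrm{op}}<1$ to make the recursion converge. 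A black-box Hanson--Wright/log-Sobolev bound does not capture either the $\ell^\infty$ control or this iterative gain, so as written the proposal would not reach the stated rate for the third term.
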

\begin{remark}
	It follows from the above result that in the regime $\Theta_{11}$, $\oms$ has universal fluctuations on regular graphs of degree $d_N\gg (N\log N)^{1/3}$. We believe this is not tight, and universal fluctuations should hold on any sequence of regular graphs with $d_N\rightarrow\infty$. In \cite{Kabluchko2019fluctuations} the authors prove such a result when $G_N$ is a non symmetric Erd\H{o}s-R\'enyi graph in the regime $\Theta_{11}$ (details in example section below).
\end{remark}
Note that we only expect a similar behavior as in the Curie Weiss model, if the underlying graphs are approximately regular and have large degree. Quantifying this philosophy, the bounds in each of the theorems have two terms, the first term controls the sparsity of the underlying graph/matrix, and the second term controls the extent of regularity of the graph/matrix. Recall example \ref{eg:sparsity}, which suggests that the term controlling the sparsity is optimal. In a similar spirit, the following example suggests that the term controlling the extent of regularity is also optimal.
\\

\begin{example}\label{eg:regularity1}
	%First we will show that when $B>0$, the assumption~\tb~is necessary. Once again, assume that $\sqrt{N}$ is an integer. 

	\begin{enumerate}
		\item[(a)]
		Assume that $\sqrt{N}$ is an integer, and let $G_N$ be the disjoint union of two complete graphs of size $N-\sqrt{N}$ and $\sqrt{N}$ respectively.  Let $\overline{d}_N$ denote the average degree of $G_N$ and $A_N=(\overline{d}_N)^{-1}G_N$.  In this case $$\lim_{N\rightarrow\infty}\frac{1}{\sqrt{N}}\sum_{i=1}^N(R_i-1)^2>0,$$
		but every other term in the RHS of \eqref{eq:uniq2main} converges to $0$. If $\ms$ is a random vector from the Ising model \eqref{eq:model} with $B\ne 0$, then 
		$\sqrt{N}\big(\oms-t\big)\overset{w}{\longrightarrow}\mu+Z_\tau$, where $\mu:=\frac{\beta t(1-t^2)}{1-\beta(1-t^2)}+\tanh(B)-t\ne 0$.
		
		\item[(b)]
		With $G_N=K_N$, let $A_N=\frac{1}{N-\sqrt{N}}G_N$. In this case $$\lim_{N\rightarrow\infty}\frac{1}{\sqrt{N}}\sum_{i=1}^N(R_i-1)>0,$$ but every other term in the RHS of \eqref{eq:uniq2main} converges to $0$. If $\ms$ is a random vector from the Ising model \eqref{eq:model} with $B\ne 0$, then 
		$\sqrt{N}\big(\oms-t\big)\overset{w}{\longrightarrow}\mu+Z_\tau$, where $\mu:=\frac{\beta t(1-t^2)}{1-\beta(1-t^2)}\ne 0$.
	\end{enumerate}

\end{example}

The main ingredient of our proof technique is comparing the Ising model on an approximately regular graph to that of an i.i.d. model/Curie-Weiss model. As a byproduct of this approach, we also obtain quantitative bounds for the following asymptotics of the log partition function via the Mean-Field prediction formula, defined via the following lower bound (c.f. \cite{Basak2017}):
\begin{align*}
\log{Z_N(\beta,B)}\ge \sup_{\ms\in [-1,1]^N}\Big\{\frac{\beta}{2}\ms^{\top}A_N\ms+B\sum_{i=1}^N\sigma_i-\sum_{i=1}^NI(\sigma_i)\Big\},
\end{align*}
where $I(x):=\frac{1+x}{2}\log \frac{1+x}{2}+\frac{1-x}{2}\log \frac{1-x}{2}$ is the binary entropy function. 
By choosing $\ms=t{\bf 1}$ with $t$ as defined in Lemma \ref{lem:fixsol}, we get the further lower bound
\begin{align}\label{eq:mf_2}
\log{Z_N(\beta,B)}\ge N\Big\{\frac{\beta t^2}{2}+Bt-I(t)\Big\}+\frac{\beta t^2}{2}\sum_{i=1}^N (R_i-1)=:\mathcal{M}_N(\beta,B).
\end{align}
It follows from \cite[Theorem 2.1]{Basak2017} that $\log Z_N(\beta,B)-\mathcal{M}_N(\beta,B)=o(N)$, as soon as  $\fa+\sum_{i=1}^N(R_i-1)^2=o(N)$. Our next result gives a bound to the approximation error of the partition function $Z_N(\beta,B)$ by $\mathcal{M}_N(\beta,B)$, which we henceforth refer to as the Mean-Field prediction in this paper.
\\

%partition function of the Curie-Weiss model, and the matrix $A_N$.
%{\color{red} right hand side of the equation above.}
\begin{theorem}\label{thm:part}
	
	\begin{enumerate}
		
		Let $A_N$ satisfy \eqref{eq:A1} and \eqref{eq:A2}.  
		
		\item[(a)]
		If $(\beta,B)\in \Theta_1$  then we have
		\begin{align*}
		\log Z_N(\beta,B)-\mathcal{M}_N(\beta,B)\lesssim \fa+t^2\sum_{i=1}^N(R_i-1)^2.
		\end{align*}

		\item[(b)]
		If $(\beta,B)\in \Theta_2$, then the same conclusion as in part (a) holds under the extra assumption that $A_N$ satisfies \eqref{eq:well_connect}.
		%\begin{align*}
		% \log Z_N(\beta,B)-N\left\{\frac{\beta}{2}t^2+Bt-I(t)\right\}\lesssim \sum_{i=1}^N(R_i-1)^2+\fa+\Big|\sum_{i=1}^N(R_i-1)\Big|.
		%\end{align*}
		
		\item[(c)]
		If $(\beta,B)\in \Theta_3$, then under the extra assumption that $A_N$ satisfies \eqref{eq:well_connect} we have
		\begin{align*}
		\log Z_N(\beta,B)-\mathcal{M}_N(\beta,B)\lesssim \fa+\frac{1}{N}\Big[\sum_{i=1}^N(R_i-1)^2\Big]^2+\frac{1}{N}\Big[\sum_{i=1}^N(R_i-1)\Big]^2+\log N.
		\end{align*}
	\end{enumerate}
\end{theorem}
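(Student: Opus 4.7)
The lower bound $\log Z_N(\beta,B)\ge M_N(\beta,B)$ is already recorded in \eqref{eq:mf_2}, so the task reduces to establishing the matching upper bound in each regime. The plan is an interpolation argument that linearly deforms the coupling matrix between Curie-Weiss and $A_N$. Set $A_N^{(\lambda)} := \lambda A_N + (1-\lambda)J_N/N$ for $\lambda\in[0,1]$, and let $Z_N(\lambda)$, $M_N^{(\lambda)}$, $\mmp_\lambda$ denote the corresponding partition function, Mean-Field prediction, and Gibbs measure. Two observations make this well-posed. First, the fixed point $t$ of \eqref{eq:mainfix} depends only on $(\beta,B)$, hence is unchanged along the path, so $M_N^{(\lambda)}$ is linear in $\lambda$ with $\tfrac{d}{d\lambda}M_N^{(\lambda)} = \tfrac{\beta t^2}{2}\sum_i(R_i-1)$. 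Second, $A_N^{(\lambda)}$ preserves \eqref{eq:A1}-\eqref{eq:A2}, and by Weyl's inequality one has $\lambda_2(A_N^{(\lambda)}) \le \max\{\lambda,\ \lambda\,\lambda_2(A_N) + (1-\lambda)\} < 1$ uniformly in $\lambda$ under \eqref{eq:well_connect}; thus the hypotheses of Theorems \ref{theo:uniq2}-\ref{theo:crit} are inherited and the fluctuation bounds apply to $\mmp_\lambda$ uniformly in $\lambda$.

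Integrating
\begin{equation*}
\frac{d}{d\lambda}\bigl[\log Z_N(\lambda) - M_N^{(\lambda)}\bigr] = \frac{\beta}{2}\mathbb{E}_{\mmp_\lambda}\bigl[\ms^{\top}(A_N - J_N/N)\ms\bigr] - \frac{\beta t^2}{2}\sum_{i}(R_i - 1)
\end{equation*}
from $0$ to $1$ reduces matters to bounding this integrand, plus the Curie-Weiss baseline $\log Z_N^{CW} - M_N^{CW}$, which is $O(1)$ in $\Theta_1\cup\Theta_2$ and $O(\log N)$ in $\Theta_3$ by Ellis-Newman. Expanding $\sigma_i\sigma_j = t^2 + t(X_i+X_j) + X_iX_j$ with $X_i := \sigma_i - t$, and noting that $\tfrac{1}{N}\vo\vo^{\top}$ has every row sum equal to $1$, the leading $t^2$ contributions cancel exactly against $\tfrac{\beta t^2}{2}\sum(R_i-1)$, so the integrand collapses to
\begin{equation*}
\beta t \sum_i (R_i - 1)\,\mathbb{E}_{\mmp_\lambda}[X_i] + \frac{\beta}{2}\mathbb{E}_{\mmp_\lambda}\bigl[\mathbf{X}^{\top}(A_N - J_N/N)\mathbf{X}\bigr].
\end{equation*}
For the mean-shift piece I would combine the concentration of $\oms$ from Theorems \ref{theo:uniq2}-\ref{theo:crit} with the approximate Mean-Field relation $m_i\approx\tanh\bigl(\beta\sum_j A_N(i,j)m_j + B\bigr)$, which upon linearization gives $|m_i - t|\lesssim |R_i-1| + N^{-1/2}$; a Cauchy-Schwarz step then integrates this contribution to the claimed $t^2\sum(R_i-1)^2$ term.

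The main technical obstacle is the centered quadratic form $\mathbb{E}_{\mmp_\lambda}[\mathbf{X}^{\top}(A_N - \tfrac{1}{N}\vo\vo^{\top})\mathbf{X}]$. Splitting $A_N = \tfrac{1}{N}\vo\vo^{\top} + (A_N - \tfrac{1}{N}\vo\vo^{\top})$, the rank-one piece contributes $N\,\Var_{\mmp_\lambda}(\oms)$, which is $O(1)$ in $\Theta_1\cup\Theta_2$ and $O(\sqrt{N})$ in $\Theta_3$ by the corresponding fluctuation theorem. The orthogonal remainder has operator norm at most $\lambda_2(A_N) < 1$ under well-connectedness, but the coordinates of $\mathbf{X}$ are correlated under $\mmp_\lambda$, so direct sub-Gaussian estimates are unavailable; I expect to convert the spectral gap into a Poincar\'e or log-Sobolev inequality for the Ising model restricted to $\vo^{\perp}$ via an exchangeable-pair or Glauber-dynamics argument, whose output is a variance bound of order $\|A_N\|_F^2$. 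At criticality ($\Theta_3$) the slower $N^{-1/4}$ scale of $\oms$ forces a fourth-moment / chaining estimate for $\bar{\ms}$, which accounts for the extra $\log N$ and the $\frac{1}{N}\bigl[\sum(R_i-1)^2\bigr]^2$ correction appearing in part (c).
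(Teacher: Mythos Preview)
The paper's route is different and more direct: it never interpolates. For $(\beta,B)\in\Theta_1$ one has the exact identity
\[
\frac{Z_N(\beta,B)}{e^{M_N(\beta,B)}}=\E^{\mmq}\exp\Big(\tfrac{\beta}{2}(\ms-t\vo)^\top A_N(\ms-t\vo)+\beta t\textstyle\sum_i(R_i-1)(\sigma_i-t)\Big),
\]
with $\mmq$ the i.i.d.\ product law~\eqref{eq:realdist1}, and a single application of the Hanson--Wright bound (Lemma~\ref{lem:tailsubG}) under $\mmq$ yields part~(a) immediately, since $s_t\beta\lambda_1(A_N)<1$ there. For $\Theta_2\cup\Theta_3$ the paper writes $Z_N/Z_N^{CW}=\E^{CW}[\exp(\tfrac{\beta}{2}\ms^\top\BN\ms)]$, uses the auxiliary variable $\tw_N$ of Proposition~\ref{prop:CWbasic} to make the spins conditionally i.i.d., localizes $\tw_N$ near $\pm t$ via Lemma~\ref{lem:gentail}, and again applies Hanson--Wright to the conditionally i.i.d.\ quadratic form (now with $D_N=\beta\BN$, where \eqref{eq:well_connect} supplies $\lambda_1(\BN)<1$). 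No fluctuation theorem, no functional inequality, and no first-moment bound on a quadratic form under a correlated measure is ever needed.

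Your proposal has a genuine gap at the step $\E_{\mmp_\lambda}[\mathbf X^\top\BN\mathbf X]\lesssim\fa$. The Poincar\'e/log-Sobolev mechanism you invoke is unavailable precisely where you need it: in $\Theta_2$ the Glauber spectral gap is exponentially small (the measure is bimodal), and at $\Theta_3$ it is polynomially small, so no $O(1)$ functional inequality holds. Even in $\Theta_1$, a Poincar\'e bound on each eigen-direction would sum to the nuclear norm $\sum_k|\lambda_k(\BN)|$ rather than $\|\BN\|_F^2$, which is the wrong order. If instead you feed Lemma~\ref{lem:qformuni} into Cauchy--Schwarz you obtain only $\sqrt{N\cdot\fa}$, not $\fa$. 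There is a secondary gap in the mean-shift term: the pointwise estimate $|\E_{\mmp_\lambda}\sigma_i-t|\lesssim|R_i-1|+N^{-1/2}$ is a heuristic linearization of the Mean-Field fixed-point system, not a consequence of any result in the paper. Finally, even if both gaps were closed, your argument routes through Theorems~\ref{theo:uniq2}--\ref{theo:crit}, hence through Lemma~\ref{lem:qformuni}, hence through Theorem~\ref{thm:conc}---and Theorem~\ref{thm:conc} is proved by exactly the same Hanson--Wright-under-$\mmq$ computation that gives Theorem~\ref{thm:part} in one step. The interpolation is therefore a detour around a one-line identity.
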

\begin{remark}\label{rem:impart}
	To see how the error bounds of the above theorem compare to existing error bounds for the Mean-Field prediction formula in the literature, let us take the example where $A_N$ is the (scaled) adjacency matrix of a $d_N$-regular graph $G_N$.
	%(see~\cref{def:ScaledAdj}). 
	In this case,  the above theorem gives the error bound $O(N/d_N)$ for the Mean-Field prediction formula. This immediately improves the bounds from~\cite[Theorem 1.1]{Basak2017} --- $o(N)$,~\cite[Theorem 1.1]{jain2019mean} --- ${O}(N/d_N^{1/3})$,~\cite[Example 3]{eldan2018taming} --- ${O}(N/d_N^{1/2-o(1)})$ under strong expander type conditions not needed here)~and~\cite[Corollary 2.9 and Example 2.10]{augeri2019transportation} --- ${O}(N/\sqrt{d_N})$. 
\end{remark}
For our next result, define an i.i.d. probability measure $\mmq$ on $\{-1,1\}^N$ by setting 
\begin{equation}\label{eq:realdist1}
\mmq(\sigma_1,\ldots ,\sigma_N):= \left(\exp(-\beta t-B)+\exp(\beta t+B)\right)^{-N}\exp\left((\beta t+B)\sum_{i=1}^N \sigma_i\right).
\end{equation}
Our next theorem shows that if an event is unlikely under the above i.i.d. measure/ the Curie Weiss model (depending on $(\beta,B)$), then it is also unlikely under an Ising model on an approximately regular graph with large degree.

\begin{theorem}\label{thm:conc}
	
	\begin{enumerate}
		
		Let $A_N$ satisfy \eqref{eq:A1} and \eqref{eq:A2}.  Also, let $\mE_N\subset \{-1,1\}^N$ be arbitrary.
		
		\item[(a)]

		If $(\beta,B)\in \Theta_1$, then we have
		\begin{align*}
		\log \P(\mE_N)\lesssim \log \mmq(\mE_N)+\fa+t^2\sum_{i=1}^N(R_i-1)^2.%+t^2\bigg|\sum_{i=1}^N (R_i-1)\bigg|.
		\end{align*}
		
		\item[(b)]
		If $(\beta,B)\in \Theta_2$, then 
		under the further assumption \eqref{eq:well_connect} we have
		\begin{align*}
		\log \P(\mE_N)\lesssim \log \P^{CW}(\mE_N)+\fa+\sum_{i=1}^N(R_i-1)^2.
		\end{align*}
		%the same conclusion as in part (a) holds under the further assumption \eqref{eq:well_connect}.
		
		\item[(c)]
		If $(\beta,B)\in \Theta_3$, then under the further assumption \eqref{eq:well_connect} we have
		\begin{align*}
		\log \P(\mE_N)\lesssim \log \P^{CW}(\mE_N)+ \fa+\frac{1}{N}\Big[\sum_{i=1}^N(R_i-1)^2\Big]^2+\frac{1}{N}\Big[\sum_{i=1}^N(R_i-1)\Big]^2+\log N.
		\end{align*}
	\end{enumerate}
\end{theorem}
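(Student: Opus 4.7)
The plan is to perform a change of measure from the Ising law $\P$ to a tractable reference --- the i.i.d.\ product measure $\mmq$ in part~(a), and the Curie--Weiss measure $\P^{CW}$ in parts~(b)--(c) --- and to control the resulting Radon--Nikodym derivative using Theorem~\ref{thm:part}, whose partition function lower bound neutralizes the mean-field baseline.

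For part~(a), a direct expansion of $\ms^{\top} A_N \ms$ about the point $t\vo$, combined with the identity $Bt - I(t) = -\beta t^2 + \log (2\cosh(\beta t + B))$ that follows from $t = \tanh(\beta t + B)$, gives the explicit log-ratio
\[
\log \frac{\P(\ms)}{\mmq(\ms)} \;=\; -\bigl[\log Z_N - M_N\bigr] + \frac{\beta}{2}(\ms - t\vo)^{\top} A_N (\ms - t\vo) + \beta t \sum_{i=1}^N (R_i - 1)(\sigma_i - t).
\]
Since $\log Z_N \ge M_N$ by \eqref{eq:mf_2}, the first term is nonpositive. Writing $\P(\mE_N) = \mathbb{E}_\mmq[\mathbbm{1}_{\mE_N}(\P/\mmq)]$ and applying Cauchy--Schwarz (with the dropped $-[\log Z_N-M_N]$ only improving the bound) yields
\[
\log \P(\mE_N) \;\le\; \tfrac{1}{2}\log \mmq(\mE_N) + \tfrac{1}{2}\log \mathbb{E}_\mmq\!\left[e^{\beta (\ms - t\vo)^{\top} A_N (\ms - t\vo) + 2\beta t \sum_i (R_i-1)(\sigma_i - t)}\right].
\]
Under $\mmq$ the centered spins $\xi_i := \sigma_i - t$ are i.i.d.\ and bounded, and $\|A_N\|_{\mathrm{op}} = \lambda_1(A_N) \to 1$ by \eqref{eq:A2}; a Hanson--Wright-type bound then controls the quadratic form by $\exp(O(\|A_N\|_F^2))$, and Hoeffding bounds the linear piece by $\exp(O(t^2 \sum (R_i-1)^2))$. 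A final Cauchy--Schwarz separates the two factors, and the constant $1/2$ on $\log\mmq(\mE_N)$ is absorbed into the $\lesssim$ notation.

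Parts~(b) and~(c) follow the same template with $\mmq$ replaced by $\P^{CW}$. The log-ratio $\log(\P/\P^{CW})$ now features a quadratic form in the residual matrix $\Delta_N := A_N - \vo\vo^{\top}/N$ together with linear corrections in $\sum (R_i - 1)\sigma_i$, and the well-connectedness assumption \eqref{eq:well_connect} is what closes the argument: because the top eigenpair of $A_N$ asymptotically equals $(1, \vo/\sqrt{N})$, the restriction of $\Delta_N$ to $\vo^{\perp}$ has operator norm bounded strictly below one, which is precisely what is needed to control the exponential moment of the quadratic form under the correlated measure $\P^{CW}$. In $\Theta_2$ the two wells of $\P^{CW}$ look Gaussian at scale $N^{-1/2}$, and after conditioning on $\mathrm{sgn}(\bar\ms)$ the argument reduces essentially to part~(a); in $\Theta_3$ the critical scaling $N^{-1/4}$ and the non-Gaussian limit density $\propto e^{-x^4/12}$ of $\bar\ms$ force the additional $\log N$ term.

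The main obstacle is establishing the Hanson--Wright-style exponential moment bound for the quadratic form under $\P^{CW}$ in the critical regime $\Theta_3$, where the log-Sobolev constant of $\P^{CW}$ degenerates. The most plausible route is to split the quadratic form via the spectral decomposition of $A_N$, handling the direction along $\vo$ (where $\bar\ms$ has heavy $N^{-1/4}$ tails) by an explicit one-dimensional integration against the limiting density $\propto e^{-x^4/12}$ at the cost of a $\log N$ factor, and handling the orthogonal directions by sub-Gaussian concentration with parameter controlled by $\lambda_2(A_N)/\lambda_1(A_N) < 1$. In the uniqueness regime of part~(a) this reduces to a textbook Hanson--Wright computation for i.i.d.\ bounded variables.
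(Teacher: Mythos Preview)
Your overall strategy---change of measure to $\mmq$ (resp.\ $\P^{CW}$), drop the nonpositive term $-[\log Z_N-M_N]$ via the mean-field lower bound, and control the remaining exponential moment by a Hanson--Wright inequality---is exactly the paper's. The gap is in the execution of the separation step.

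Using Cauchy--Schwarz to split off $\mathbbm{1}_{\mE_N}$ doubles the exponent in the Radon--Nikodym factor: you end up needing
\[
\mathbb{E}_\mmq\exp\!\Big(\beta(\ms-t\vo)^{\top}A_N(\ms-t\vo)+2\beta t\sum_i(R_i-1)(\sigma_i-t)\Big)\;=\;\exp\!\big(O(\fa)+O(t^2\textstyle\sum(R_i-1)^2)\big).
\]
The Gaussian-comparison Hanson--Wright bound (Lemma~\ref{lem:tailsubG}) requires $s_t\,\lambda_1(D_N)<1$ with $D_N=2\beta A_N$ here, i.e.\ $2\beta s_t<1$. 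In $\Theta_{11}$ one has $s_0=1$, so this fails as soon as $\beta\ge 1/2$; in $\Theta_{12}$ it fails whenever $\beta t\ge B$. A second Cauchy--Schwarz to separate the linear piece only makes things worse. The point is that the threshold $\beta s_t=1$ is sharp (the MGF of a Gaussian quadratic form genuinely diverges there), so no ``Hanson--Wright-type'' bound can save the argument after doubling. The fix is to replace Cauchy--Schwarz by H\"older with conjugate exponents $(1+p,(1+p)/p)$ for $p>0$ small: the exponential moment then carries $D_N=(1+p)\beta A_N$, and since $\beta s_t<1$ strictly throughout $\Theta_1$, one can always find $p>0$ with $(1+p)\beta s_t<1$. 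The resulting coefficient $p/(1+p)$ on $\log\mmq(\mE_N)$ is still a positive constant.

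For (b)--(c) your sketch is too coarse. Conditioning on $\mathrm{sgn}(\bar\ms)$ does not make the spins conditionally independent under $\P^{CW}$; what works is the auxiliary Gaussian $W_N$ of Proposition~\ref{prop:CWbasic}, conditional on which the $\sigma_i$ are i.i.d.\ with mean $\tw_N$. One must further localize to $\{|\tw_N-M(\ms)|<\varepsilon\}$ (the complement is handled by Lemma~\ref{lem:gentail}) so that $s_{\tw_N}$ stays close to $s_t$; in $\Theta_2\cup\Theta_3$ one has $\beta s_t=1$ exactly, so the spectral gap $\lambda_1(\BN)<1$ from~\eqref{eq:well_connect} is what buys the strict inequality in Lemma~\ref{lem:tailsubG}, not merely $\lambda_1(A_N)\to1$. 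The same H\"older-with-small-$p$ device is again essential here; Cauchy--Schwarz would require $2\beta s_t\lambda_1(\BN)<1$, which need not hold. The extra $\log N$ in (c) comes from the Curie--Weiss partition function estimate at criticality (Proposition~\ref{prop:CWresmain}(a)), not from any one-dimensional integration against $e^{-x^4/12}$.
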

As an application of the above theorem, we immediately get the following exponential concentration for $\oms$.

\begin{corollary}\label{cor:expconc}
	Suppose $A_N$ satisfies \eqref{eq:A1}, \eqref{eq:A2}, and 
	$$\lim_{N\rightarrow\infty}\frac{1}{N}\sum_{i=1}^N(R_i-1)^2=0,\qquad \lim_{N\rightarrow\infty}\frac{1}{N}\lVert A_N\rVert^2_F=0.$$
	\begin{itemize}
		\item
		If $(\beta,B)\in \Theta_1$, then for every $\delta>0$ we have
		$$\limsup_{n\rightarrow\infty}\frac{1}{N}\log \P(|\oms-t|>\delta)<0.$$
		The same conclusion holds for $(\beta,B)\in\Theta_3$, under the extra assumption that $A_N$ satisfies \eqref{eq:well_connect}.
		\item
		If $(\beta,B)\in \Theta_2$, then under the extra assumption that $A_N$ satisfies \eqref{eq:well_connect}, for every $\delta>0$ we have
		$$\limsup_{n\rightarrow\infty}\frac{1}{N}\log \P(|\oms-M(\ms)|>\delta)<0,$$
		where $M(\ms)$ is defined as in~\cref{lem:cw_known}.
	\end{itemize}
\end{corollary}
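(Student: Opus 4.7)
The plan is to deduce this corollary directly from Theorem \ref{thm:conc} by taking $\mE_N$ to be the event in question --- namely $\{|\oms-t|>\delta\}$ in the $\Theta_1$ and $\Theta_3$ cases, and $\{|\oms-M(\ms)|>\delta\}$ in the $\Theta_2$ case --- and then (i) verifying that every error term on the right-hand side of the bound in Theorem \ref{thm:conc} is $o(N)$ under the stated hypotheses, and (ii) establishing classical exponential concentration of the event under the corresponding reference measure ($\mmq$ for $\Theta_1$, $\P^{CW}$ for $\Theta_2$ and $\Theta_3$).

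Verification of (i) is routine. The hypotheses immediately give $\fa=o(N)$ and $\sum_{i=1}^N (R_i-1)^2=o(N)$; the latter trivially yields $t^2\sum_{i=1}^N (R_i-1)^2=o(N)$ as well. For the $\Theta_3$ case one additionally needs $\frac{1}{N}\bigl[\sum_{i=1}^N (R_i-1)^2\bigr]^2=o(N)$, which follows by squaring $\sum_{i=1}^N (R_i-1)^2=o(N)$, and $\frac{1}{N}\bigl[\sum_{i=1}^N (R_i-1)\bigr]^2=o(N)$, which follows from Cauchy--Schwarz via $\bigl[\sum_{i=1}^N (R_i-1)\bigr]^2\le N\sum_{i=1}^N(R_i-1)^2$. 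The $\log N$ term is obviously $o(N)$. Dividing the bound from Theorem \ref{thm:conc} by $N$ therefore leaves only the reference-measure term in the limit supremum.

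For (ii), in the $\Theta_1$ case $\mmq$ is the product measure on $\{-1,+1\}^N$ under which each coordinate has mean $\tanh(\beta t+B)=t$ (by the fixed point equation \eqref{eq:mainfix}). Hoeffding's inequality immediately gives $\mmq(|\oms-t|>\delta)\le 2\exp(-N\delta^2/2)$, so $\frac{1}{N}\log\mmq(\mE_N)\le -\delta^2/2<0$. In the $\Theta_2$ and $\Theta_3$ cases we appeal to the classical Cramér-type large deviation principle for the Curie--Weiss magnetization: under $\P^{CW}$, $\oms$ satisfies an LDP at speed $N$ with rate function $I(m)-\beta m^2/2 - Bm - \text{const}$ whose minimizers are precisely the roots of \eqref{eq:mainfix} described in Lemma \ref{lem:fixsol}. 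In $\Theta_2$ the minima are attained exactly at $\pm t$, so the event $\{|\oms-M(\ms)|>\delta\}$ (which stays a fixed distance away from \emph{both} $\pm t$) has strictly positive rate; in $\Theta_3$ the unique minimum is $0=t$, so $\{|\oms|>\delta\}$ has strictly positive rate. In both cases $\limsup_N \frac{1}{N}\log \P^{CW}(\mE_N)<0$, and combining with (i) yields the claim.

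The heavy lifting has already been carried out in Theorem \ref{thm:conc}; the only mildly delicate point is handling the non-uniqueness regime $\Theta_2$, where one must exploit the fact that the LDP rate function has two symmetric minima and that $M(\ms)$ is chosen to match the sign of $\oms$ --- but this is a textbook computation (see e.g.\ \cite{Ellis1978}), and poses no genuine obstacle.
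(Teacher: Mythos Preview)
Your proposal is correct and follows exactly the route the paper intends: the corollary is stated immediately after Theorem~\ref{thm:conc} with the remark that it is obtained ``as an application of the above theorem,'' and no further proof is given. You have correctly filled in the two omitted steps---checking that the error terms in Theorem~\ref{thm:conc} are $o(N)$ under the stated hypotheses, and invoking Hoeffding (for $\Theta_1$) or the classical Curie--Weiss large deviation principle (for $\Theta_2,\Theta_3$) to control the reference-measure probability.
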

Similar concentration results can be obtained for other higher order polynomials of $\ms$, as studied in \cite{Rados2019, Cha2005,Gheissari2018} and the references therein. However, these papers focus exclusively on the high temperature regime $\Theta_{11}$ whereas our result applies to all temperatures. The references cited above can deal with non Ferromagnetic interactions and general external fields as well. We note in passing that it should be possible to extend our proof technique to general non constant magnetic fields.
%Note that the above theorem does not require assumption~\td~and applies to general $d_N$-regular graphs. %Similar conclusions also follow for the other parameter regimes from Lemmas~\ref{lem:partnonun}~and~\ref{lem:partcrit}, none of which require expander type conditions. 

%We do not need assumption~\tc~in~\cref{theo:uniq1} because assumptions~\ta~and~\thh~imply assumption~\tc~(see~\cref{prop:eigen}).
%We would like to point out that we do not expect~\cref{theo:uniq1} to be tight in the condition imposed on $r_N$. In particular, we believe that the coefficient of $\fa$ should be $N^{-1}$ (may be up to log factors). Note that the limiting distributions above coincide with those from the Curie-Weiss model (see~\cite{Ellis1978}).
%
%Observe that the left hand sides in the main theorems are bounded above by $1$, so it suffices to operate under the following assumptions:
%\begin{itemize}
%	\item[\tz] $\sum_{i=1}^N R_i=N+o(v_N)$.
%	%\item[\ta] $\max_{1\leq i\leq N}R_i\lesssim 1$.
%	\item[\tb] $\sum_{i} (R_i-1)^2=o(k_N)$ where $k_N$ will be specified depending on the theorem.
%	%\item[\tc] $\lambda_1(A_N)\to 1$ as $N\to\infty$.
%	\item[\td] $\fa=o(r_N)$ where $r_N$ will be specified depending on the theorem.
%\end{itemize}
%where $k_N$, $v_N$ and $r_N$ are chosen so that the right hand sides of our main theorems converge to $0$ (depending on the regime). The proofs of the above theorems can be found in~\cref{sec:mainthpfs}.
\subsection{Proof overview}\label{sec:pfideas}
For the sake of simplicity, we focus on the case where $A_N$ is the adjacency matrix of a $d_N$ regular graph scaled by $d_N$. For verifying Theorem \ref{theo:uniq2}, following~\cite[Theorem 2.1]{Cha2011}, form an exchangeable pair $(\ms,\ms')$ as follows:

Let $I$ denote a randomly sampled index from $\{1,2,\ldots ,N\}$. Given $I=i$, replace $\sigma_i$ with an independent $\pm 1$ valued random variable $\sigma'_i$ with mean $\E[\sigma_i|(\sigma_j,j\neq i)]=\tanh(\beta m_i(\ms)+B)$, where $m_i(\ms):=\sum_{j=1}^NA_N(i,j)\sigma_j.$ Then, setting $\ms':=(\sigma_1,\cdots,\sigma_{i-1},\sigma'_i,\sigma_{i+1},\cdots,\sigma_N)$, we have that $(\ms,\ms')$ is an exchangeable pair. With $T_N:=\sqrt{N}(\overline{\boldsymbol{\sigma}}-t)$ and $T_N':=\sqrt{N}(\overline{\boldsymbol{\sigma'}}-t),$ %the pair $(T_N,T_N')$ is exchangeable as we 
%Define $m_i(\ms):=\sum_{j=1}^N A_N(i,j)\sigma_j$. 
a simple computation using a Taylor's series expansion of $\tanh(\beta x+B)$ around $x=t$ gives
\begin{align*}
\E[T_N-T_N'|\ms]=&N^{-3/2}\sum_{i=1}^N\Big(\sigma_i-\tanh(\beta m_i(\ms)+B)\Big)\\
=&N^{-3/2}\bigg[\sum_{i=1}^N\Big(\sigma_i-\tanh(\beta t+B)\Big)+\sum_{i=1}^N(m_i(\ms)-t)\text{sech}^2(\beta t+B)\nonumber \\& \qquad +O_P\Big(\sum_{i=1}^N(m_i(\ms)-t)^2\Big)\bigg],
\end{align*}
Since $G_N$ is regular, we have $\sum_{i=1}^N\sigma_i=\sum_{i=1}^Nm_i(\ms)$. Also $t$ satisfies $t=\tanh(\beta t +B)$, and so the above display gives
\begin{align}\label{eq:non_linear}
\E[T_N-T_N'|\ms]=\frac{T_N}{N}(1-\beta(1-t^2))+N^{-3/2} O_P\Big(\sum_{i=1}^N(m_i(\ms)-t)^2\Big).
\end{align}
% By a two term Taylor expansion, $\tanh(\beta m_i(\ms)+B)=\tanh(\beta M(\ms)+B)+\beta\sech^2(\beta M(\ms)+B)(m_i(\ms)-M(\ms))+\xi_i(m_i-M(\ms))^2$ for bounded random variables $\xi_i$. As $\tanh(\beta M(\ms)+B)=M(\ms)$ and $M(\ms)^2=t^2$ from~\cref{lem:fixsol}, 
%\begin{align}\label{eq:pfeq1}
%\beta\sech^2(\beta M(\ms)+B)\sum_{i=1}^N (m_i-t)=\beta(1-t^2)\sum_{j=1}^N  \left(\sum_{i=1}^N A_N(i,j)\right)(\sigma_j-M(\ms))=\beta(1-t^2)\sqrt{N}T_N.
%\end{align}
%It can be shown that $M(\ms)=M(\ms')$ with high probability (see~\cref{lem:gentail}), using~\eqref{eq:pfeq1}, we then get:
%\begin{align}\label{eq:pfeq5}
%\E[T_N-T_N'|T_N]-\frac{T_N}{N}(1-\beta(1-t^2))\approx \frac{1}{N\sqrt{N}}\sum_{i=1}^N \E[\xi_i(m_i(\ms)-M(\ms))^2|T_N]
%\end{align}
By~\cite[Theorem 1.2]{Cha2011}, $T_N$ approximately satisfies Stein's equation if we show that the second term in the RHS above is negligible, i.e. $S_N:=\sum_{i=1}^N (m_i(\ms)-t)^2=o(\sqrt{N})$. This is the content of Lemma \ref{lem:qformuni}, which bounds the exponential moment of $S_N$ to show that $S_N=O_P\Big(\frac{N}{d_N}\Big)$. Thus we require $d_N\gg \sqrt{N}$ to ensure the linear term in \eqref{eq:non_linear} dominates the error term. The main ingredient for Lemma \ref{lem:qformuni} is a version of the Hanson-Wright inequality for $\{-1,+1\}$ valued random variables (c.f.~Lemma \ref{lem:tailsubG}). Justifying the above steps gives a proof of Theorem \ref{theo:uniq2}. The proof of Theorem \ref{theo:nouniq} follows on similar lines, after replacing $t$ above by $M(\ms)$, where $M(\ms)=t$ if $\bar{\ms}\ge 0$, and $M(\ms)=-t$ otherwise, as defined in~\cref{lem:cw_known}.

The above program does not work for Theorem \ref{theo:crit}, which deals with the critical regime $\Theta_3$. This is because $\beta(1-t^2)=1$, and so the linear term in \eqref{eq:non_linear} vanishes. With $T_N=N^{1/4}\overline{\ms}$, using a Taylor's series expansion of $\tanh(x)$ around $x=\bar{\boldsymbol{m}}(\ms):=N^{-1}\sum_{i=1}^N m_i(\ms)$ and following similar steps as the derivation of \eqref{eq:non_linear} we have
\begin{align}
\begin{split}\label{eq:non_linear2}
\E[T_N-T_N'|\ms]
= N^{-7/4}\bigg[&\sum_{i=1}^N\Big(\sigma_i-\tanh(\bar{\boldsymbol{m}}(\ms))\Big)+\frac{\tanh''(\bar{\boldsymbol{m}}(\ms))}{2}\sum_{i=1}^N(m_i(\ms)-\bar{\boldsymbol{m}}(\ms))^2\\
+&O_P\left(\sum_{i=1}^N\Big|m_i(\ms)-\bar{\boldsymbol{m}}(\ms)\Big|^3\right)\bigg].
\end{split}
\end{align}
Noting that $\bar{\boldsymbol{m}}(\ms)=\overline{\ms}$, the leading term in the RHS of \eqref{eq:non_linear2} equals $N^{-3/4}(\overline{\ms}-\tanh(\overline{\ms}))\approx \frac{1}{3N^{3/4}}\overline{\ms}^3.$
From here, provided one can ignore the two error terms in \eqref{eq:non_linear2}, we can use \cite[Theorem 1.2]{Cha2011} to show that $T_N$ converges in distribution to the non-normal limit $W$, as desired. The main obstacle is the non trivial step of bounding the error terms in \eqref{eq:non_linear2}. To this effect, note that the error terms in the RHS of \eqref{eq:non_linear2} can be bounded as follows:
\begin{align*}
\begin{split}%\label{eq:non_linear3}
&\;\;\frac{\tanh''(\bar{\boldsymbol{m}}(\ms))}{2}\sum_{i=1}^N(m_i(\ms)-\bar{\boldsymbol{m}}(\ms))^2=O_P(\overline{\ms}\widetilde{S}_N),\\ & 
\sum_{i=1}^N\Big|m_i(\ms)-\bar{\boldsymbol{m}}(\ms)\Big|^3=O_P\Big( \max_{1\le i\le N}|m_i(\ms)-\bar{\boldsymbol{m}}(\ms)| \widetilde{S}_N\Big),
\end{split}
\end{align*}
where $\widetilde{S}_N:=\sum_{i=1}^N (m_i(\ms)-\bar{\boldsymbol{m}}(\ms))^2$. Thus in contrast to what happened before, it no longer suffices to bound only the quadratic term $\widetilde{S}_N$, but instead we also need to bound $\max_{i\in [N]}|m_i(\ms)-\bar{\boldsymbol{m}}(\ms)|$. The estimate for $\widetilde{S}_N$ is done by introducing an auxiliary variable to express $\ms$ as a mixture of i.i.d.~distributions, and then using Lemma \ref{lem:tailsubG}. The more challenging task is to bound $\max_{i\in [N]}|m_i(\ms)-\bar{\boldsymbol{m}}(\ms)|$, which we achieve by using a novel recursive argument, as follows:

Using the method of concentration via exchangeable pairs, we first show the approximate fixed point equation
$$m_i(\ms)-\bar{\boldsymbol{m}}(\ms)\stackrel{P}{\approx } \sum_{j=1}^NA_N(i,j) (m_j(\ms)-\bar{\boldsymbol{m}}(\ms)).$$
Writing $\tbm(\ms):=(m_1(\ms)-\bar{\boldsymbol{m}}(\ms),\ldots ,m_N(\ms)-\bar{\boldsymbol{m}}(\ms))$, and recursing the above fixed point equation we get $\tbm \approx A_N^k \tbm$, which gives
\begin{align}\label{eq:non_linear4}
|\tilde{m}_i(\ms)|\approx \big|\sum_{j=1}^N (A_N^k)(i,j)\tilde{m}_j(\ms)\big|\lesssim \sqrt{\sum_{j=1}^N \tilde{m}_j(\ms)^2}\sqrt{(A_N^{2k})(i,i)}.
\end{align}
Since $A_N$ is the scaled adjacency matrix of a connected regular graph, there can be at most two eigenvalues with absolute value $1$, and so for $k$ large enough the contribution of all other eigenvalues to $A_N^{2k}$ should be negligible. Also the corresponding normalized eigenvectors for these two eigenvalues must have all entries equal to $\frac{1}{\sqrt{N}}$ in absolute value. This suggests the approximate inequality 
\begin{align}\label{eq:matrix_nl}
(A_N^{2k})(i,i)\le \frac{2}{N}
\end{align}
for $k$ large enough. 
In~\cref{lem:tracebd}, we show the above bound for general regular matrices with non-negative entries satisfying the spectral gap condition \eqref{eq:well_connect}, but no condition on the minimum eigenvalue (i.e. no expander type condition). Of course such a result is not correct if \eqref{eq:well_connect} does not hold, as then $G_N$ can be disconnected. Plugging the bound \eqref{eq:matrix_nl} in \eqref{eq:non_linear4} along with the estimate $\widetilde{S}_N=\sum_{i=1}^N\tilde{m}_i(\ms)^2=O_P\Big(\frac{N}{d_N}\Big)$ gives
$$\max_{1\le i\le N}\tilde{m}_i(\ms)=O_P\Big(\sqrt{\frac{N}{d_N} \times \frac{2}{N}}\Big)\lesssim \frac{1}{\sqrt{d_N}},$$
Because of standard union bounds, we incur a log factor and deduce the estimate
$$
\max_{1\le i\le N}|m_i(\ms)-\bar{\boldsymbol{m}}(\ms)|=\max_{1\le i\le N}|\tilde{m}_i(\ms)|=O_P\Big(\sqrt{\frac{\log N}{d_N}}\Big).
$$
Plugging this bound back into \eqref{eq:non_linear2} shows that both the error terms are negligible, and hence gives an approximate Stein's equation for $W$ thereby completing the proof of Theorem \ref{theo:crit}.
\\

For verifying Theorem \ref{theo:uniq1} in the regime $\Theta_{11}$, we use a modified version of \eqref{eq:non_linear} with $T_N=\sqrt{N}\bar{\ms}$, where we expand $\tanh(\beta x)$ around $x=0$:
\begin{align*}%\label{eq:non_linear3}
\notag\E[T_N-T_N'|\ms]&=\frac{1}{N^{3/2}}\sum_{i=1}^N(\sigma_i-\tanh(\beta m_i(\ms))\\ &=\frac{1}{N^{3/2}}\left[\sum_{i=1}^N(\sigma_i-\beta m_i(\ms))+O_P(\sum_{i=1}^N|m_i(\ms)|^3)\right]\\
&=\frac{(1-\beta)T_N}{\sqrt{N}}+O_P\Big(\max_{1\le i\le N}|m_i(\ms)| \sum_{i=1}^Nm_i(\ms)^2\Big).
\end{align*}
As before, to complete the proof one needs to show that the error term above is negligible. The quadratic term $S_N=\sum_{i=1}^Nm_i(\ms)^2$ is controlled  using Lemma \ref{lem:qformuni}, and the max term is controlled by setting up another fixed equation (see Lemma \ref{lem:unifbd} part (a)).
\\

The above sketch works for exactly regular graphs. To handle approximately regular graphs/matrices, we need to bound the moments of ${\bf c}^{\mathrm T}\ms$ where $c_i=R_i-1$, in the regimes $\Theta_{11}$ and $\Theta_3$. This requires another recursive argument, and  is carried out in Lemma \ref{lem:unifbd} part (b) and Lemma \ref{lem:linfbdgen} part (b) for regimes $\Theta_{11}$ and $\Theta_3$ respectively. In fact, the proof of Lemma \ref{lem:unifbd} applies to general vectors ${\bf c}$, and the proof of Lemma \ref{lem:linfbdgen} can be modified to handle this case. %This is because 
% This requires us to further bound the second moment of ${\bf p}'\ms$, where ${\bf p}$ is the eigenvector of the eigenvalues close to $-1$, if present. Using the fact that ${\bf p}$ is a contrast vector, we 
%
%we show $\max_{1\leq i\leq N} (A_N^{2k})(i,i)\lesssim N^{-1}$ provided $k\sim\log{N}$, under assumption~\eqref{eq:well_connect} and no assumptions on the minimum eigenvalue of $A_N$. Next, $\sum_{j=1}^N \tilde{m}_j^2=\sum_{j=1}^N (m_j(\ms)-\bar{\boldsymbol{m}}(\ms))^2=O(N/d_N)$ from before, and we therefore get $\max_{1\leq i\leq N}|m_i(\ms)-\bar{\boldsymbol{m}}(\ms)|=O(d_N^{-1/2})$ and consequently $\sum_{j=1}^N (m_j(\ms)-\bar{\boldsymbol{m}}(\ms))^3=O(d_N^{-3/2}N)=o(N^{1/4})$ for $d_N\gg\sqrt{N}$. A similar argument has also been used in the proof of~\cref{theo:uniq1}.

\subsection{Examples}\label{sec:exmp}
As mentioned before, the most common example of a coupling matrix $A_N$ in model~\eqref{eq:model} is the scaled adjacency matrix $\frac{1}{\bar{d}_N}G_N$, where $G_N$ is the adjacency matrix of a simple labeled graph on $N$ vertices with degree vector $(d_1,\cdots,d_N)$, and $\bar{d}_N:=\frac{1}{N}\sum_{i=1}^Nd_i$ is the average degree of $G_N$.
%\begin{definition}\label{def:ScaledAdj}
%	For a simple graph $G_N$ with vertices labeled $[n]:= \{1,2,\ldots ,n\}$, define the coupling matrix $A_N$ by setting $A_N(i,j):= (1/\overline{d}_N)\mathbbm{1}\{\textrm{vertices i and j are connected}\}$ where $\overline{d}_N$ denotes the average degree of $G_N$, i.e., $\overline{d}_N:= N^{-1}\sum_{i=1}^N d_i$ where $d_i$ denotes the degree of the $i^{\textrm{th}}$ vertex. When dealing with random graphs, the scaling by $\overline{d}_N$ is often replaced by its expectation.
The scaling discussed in the above definition ensures that the resulting Ising model has non-trivial phase transition properties (see e.g.,~\cite{Basak2017,Mukherjee2018}). Below we consider some specific examples of graphs to illustrate our theorems. %To hide log factors, we will sometimes use the notation $x_N\lesssim_{\log} y_N$ to imply the existence of an exponent $\gamma$ free of $N$ such that $x_N\le (\log N)^\gamma y_N$.%and also the main theorems in this paper) which is of great interest in Statistical Physics and Applied Probability. 
%\end{definition}
\begin{enumerate}
	\item[(a)]{\bf Regular graphs:}
	Let $G_N$ be a $d_N$ regular graph. Then $\lVert A_N\rVert_F^2=\frac{N}{d_N}$ and $R_i=1$, and so applying Theorems \ref{theo:uniq2}, \ref{theo:nouniq}, \ref{theo:crit} and \ref{theo:uniq1} give
	\begin{small}
	\begin{eqnarray*}
		d_{KS}\Big(\sqrt{N}(\oms-t),Z_\tau\Big)\lesssim & \sqrt{\frac{N \log N}{d_N^{3}}}+\frac{1}{\sqrt{N}}&\text{ if }(\beta,B)\in \Theta_{11},\\
		d_{KS}\Big(\sqrt{N}(\oms-t),Z_\tau\Big)\lesssim&\frac{\sqrt{N}}{d_N}&\text{ if }(\beta,B)\in \Theta_{12},\\
		d_{KS}\Big(\sqrt{N}(\oms-M(\ms)),Z_\tau\Big)\lesssim & \frac{\sqrt{N}}{d_N}&\text{ if }(\beta,B)\in \Theta_2\text{ and }G_N \text{ satisfies \eqref{eq:well_connect},}\\
		d_{KS}\Big(N^{1/4}\oms,W\Big)\lesssim & \Big(\frac{\sqrt{N}\log N}{d_N}\Big)^{3/2}+\frac{\sqrt{N}}{d_N}+\frac{\log N}{\sqrt{N}}&\text{ if }(\beta,B)\in \Theta_3\text{ and }G_N \text{ satisfies \eqref{eq:well_connect},}
	\end{eqnarray*}
    \end{small}
	where $Z_{\tau}$ and $W$ are defined as in~\cref{lem:cw_known}. In particular this means that $\oms$ has the same fluctuations as that of the Curie Weiss model as soon as 
	\begin{eqnarray}\label{eq:d_reg}
	\begin{split}
	d_N\gg& (N\log N)^{1/3}&\text{ if }(\beta, B)\in \Theta_{11},\\
	d_N\gg&  \sqrt{N}&\text{ if }(\beta,B)\in \Theta_{12},\\
	d_N\gg & \sqrt{N}&\text{ if }(\beta,B)\in	  \Theta_2 \text{ and \eqref{eq:well_connect} holds},\\
	d_N\gg & \sqrt{N}\log N&\text{ if }(\beta,B)\in \Theta_3\text{ and \eqref{eq:well_connect} holds}.
	\end{split}
	\end{eqnarray}
	%$G_N$ is well connected, and $d_N\gg_{\log} N^{1/3}$ if $(\beta,B)\in \Theta_{11}$, $d_N\gg \sqrt{N}$ if $(\beta,B)\in \Theta_{12}\cup \Theta_2$, and $d_N\gg_{\log} \sqrt{N}$ if $(\beta,B)\in \Theta_3$. 
	Further, as already shown in Example \ref{eg:sparsity}, the requirement $d_N\gg \sqrt{N}$ is sharp in the regimes $\Theta_{12}\cup \Theta_2\cup \Theta_3$. Note that for the particular case of the Curie-Weiss model at criticality we get the convergence rate of $N^{-1/2}\log N$, which matches the rate obtained in \cite{Cha2011} up to the log factor. In fact, it is easy to modify our argument in the special case of the Curie-Weiss model to get rid of the log factor. We observe that for the case of random $d_N$ regular graphs, condition \eqref{eq:well_connect} holds with high probability, as $\lambda_2(G_N)=O_P(\sqrt{d_N})\ll d_N$ (see~\cite{broder1987second}), and so our results apply directly to random regular graphs if $d_N$ satisfies \eqref{eq:d_reg}. We stress that our results apply to regular bipartite graphs as well, and does not need the graph to be an expander as in \cite{Bresler2019}.

	\item[(b)]{\bf Erd\H{o}s-R\'enyi graphs: }%\label{sec:appranreg}
	%Our main theorems also imply \emph{annealed} central limit theorems for Ising models on ``approximately" regular random graphs. In this case, note that $\overline{d}_N$, from~\cref{def:ScaledAdj} is itself a random variable and is often replaced by its expectation under the graph measure. Our general results apply under both scalings, however we only present the more commonly studied scaling with expectation of $\overline{d}_N$ in the examples below.
	%Let $G_N$ be an Erd\H{o}s-R\'enyi random graph with parameters $(N,p_N)$
	%see e.g.,~\cite{Bovier1993,kabluchko2019fluctuations}]\label{eg:erg}
	Suppose $G_N\sim \mathcal{G}(N,p_N)$ is the symmetric Erd\H{o}s R\'enyi random graph with $0<p_N\leq 1$. Define $A_N(i,j):= \frac{1}{(N-1)p_N}G_N(i,j)$, and note that
	\begin{small}
	\begin{align}\label{eq:gg}
	\begin{split}
	\max_{1\le i\le N}|R_i-1|=O_P\Big(\sqrt{\frac{\log N}{Np_N}}\Big),\quad
	\Big|\sum_{i=1}^N(R_i-1)\Big|=O_P\Big(\frac{1}{\sqrt{p_N}}\Big),\quad \sum_{i=1}^N(R_i-1)^2=O_P\Big(\frac{1}{p_N}\Big).
	\end{split}
	\end{align}
	\end{small}%\mathbbm{1}\{\textrm{vertices i and j are connected}\}$. 
	Since $\lambda_2(G_N)=O_P(\sqrt{Np_N})\ll Np_N$ (\cite[Theorem 1.1]{Feige2005}), \eqref{eq:well_connect} holds as well. Then our theorems conclude universal fluctuations for $\oms$ as soon as 
	\begin{eqnarray}\label{eq:er}
	\begin{split}
	p_N\gg& (\log N)^{1/3}N^{-2/3}&\text{ if }(\beta, B)\in \Theta_{11},\\
	p_N\gg&  N^{-1/2}&\text{ if }(\beta,B)\in \Theta_{12}\cup \Theta_2,\\
	p_N\gg &{(\log N)^4 N^{-1/2}}&\text{ if }(\beta,B)\in \Theta_3,
	\end{split}
	\end{eqnarray}
	both in the quenched and annealed setting. We note that our results also apply to the asymmetric Erd\H{o}s-R\'enyi random graph $\mathcal{\tilde{G}}(N,p_N)$, under the same regime of $p_N$ as in the symmetric case. This is because an Ising model on the asymmetric Erd\H{o}s-R\'enyi  graph is equivalent to an Ising model with the symmetric coupling matrix $A_N(i,j)=\frac{\mathcal{\tilde{G}}_N(i,j)+\mathcal{\tilde{G}}_N(j,i)}{2(N-1)p_N}$, which is approximately regular, as
	$$ R_i=\sum_{j=1}^NA_N(i,j)=\frac{1}{2(N-1)p_N}\sum_{j=1}^N (\mathcal{\tilde{G}}_N(i,j)+\mathcal{\tilde{G}}_N(j,i))\sim \frac{\mbox{Bin}(2(N-1),p_N)}{2(N-1)p_N}\stackrel{P}{\approx} 1,$$
	where the last approximation (in the sense of~\eqref{eq:gg}) follows by a standard application of Chernoff's inequality. The asymmetric case was studied recently in~\cite{Kabluchko2019fluctuations}, where the authors derive fluctuations as soon as $Np_N\to\infty$, but only in the sub parameter regime $\Theta_{11}\cup \Theta_3$. The authors conjecture similar results for the symmetric case, which we are able to verify partially in this paper. Moreover, our theorems apply simultaneously to both the symmetric and the asymmetric cases with explicit convergence rates. A few months after our paper was submitted,~\cite{kabluchko2020low} was uploaded where the authors obtain fluctuations for the magnetization in the asymmetric case for the parameter regime $\Theta_{12}\cup\Theta_2$ when $N^{1/3}p_N\to\infty$, in~\cite[Theorems 1.1 and 1.3]{kabluchko2020low}. In contrast, our results show universal fluctuations in the larger regime $N^{1/2}p_N\to\infty$, and apply to both the symmetric and asymmetric cases, from which the fluctuation results for the magnetization in \cite{kabluchko2020low} follow as corollaries. On the other hand, in~\cite[Theorem 1.4]{kabluchko2020low}, the authors derive a central limit theorem for the log partition function when $N^{1/3}p_N\to\infty$, a direction which is not explored in our paper.
	
	\item[(c)]{\bf Balanced stochastic block model: }% see e.g.,~\cite{liu2017log}]\label{eg:sbm}
	Suppose $G_N$ is a stochastic block model with $2$ communities of size $N/2$ (assume $N$ is even). Let the probability of an edge within the community be $a_N$, and across communities be $b_N$. This is the well known stochastic block model, which has received considerable attention in Probability, Statistics and Machine Learning (see \cite{deshpande2018contextual,liu2017log,mossel2012stochastic} and references within). If we take $A_N=\frac{2}{N(a_N+b_N)}G_N$,   universal asymptotics hold for $\oms$ as soon as $p_N:=\frac{a_N+b_N}{2}$ satisfies \eqref{eq:er}, and $\liminf_{N\rightarrow\infty}\frac{b_N}{a_N}>0$ (needed to ensure \eqref{eq:well_connect}). Similar results hold when the number of communities is larger than $2$. %Since the 

	\item[(d)]{\bf Sparse regular graphons:} 
	Suppose that $W$ be a symmetric measurable function from $[0,1]^2$ to $[0,1]$, such that $\int_{[0,1]}W(x,y)dy=a>0$ for all $x\in [0,1]$, and $\lambda_2(W)<a$, where $\{\lambda_i(W)\}_{i\ge 1}$ are the countable set of ordered eigenvalues. Also let $(U_1,\cdots,U_N)\stackrel{i.i.d.}{\sim}U(0,1)$. For $\gamma\in (0,1]$, let \[\{G_N(i,j)\}_{1\le i<j\le N}\stackrel{i.i.d.}{\sim}Bern\bigg(\frac{W(U_i,U_j)}{N^\gamma}\bigg).\] Such random graph models have been studied in the literature under the name $W$ random graphons (c.f.~\cite{BorgsdenseI,BorgsdenseII,BorgsLPI,BorgsLPII,Lovasz2012}). In this case for the choice $A_N=\frac{1}{Np_N}G_N$ with  $p_N=a N^{-\gamma}$, universal fluctuation holds as soon as $\gamma<1/2$. Indeed, note that $\E [R_i|U_1,\ldots ,U_N]=(aN)^{-1}\sum_{j=1}^N W(U_i,U_j)$ and write \[R_i-1=\left[R_i-\frac{\sum_{j=1}^NW(U_i,U_j)}{aN}\right]+\left[\frac{\sum_{j=1}^NW(U_i,U_j)}{aN}-1\right].\] By using Bernstein's inequality conditional on $(U_1,\ldots ,U_N)$, the first term is $O_P((Np_N)^{-1/2})$. Similarly, by applying Bernstein's inequality conditional on $U_i$, the second term is $O_P(N^{-1/2})$. An application of the union bound then implies that \eqref{eq:gg} holds. %Consequently we have universal fluctuations for $\bar{\ms}$ as soon as \eqref{eq:er} holds. 
	Also with $W_N$ denoting the $N\times N$ matrix with $W_N(i,j)=W(U_i,U_j)$, using \cite[Corollary 3.3]{Afonso2016} we have 
	$ \lVert A_N-(aN)^{-1}W_N\rVert_{\text{op}}=O_P\Big(\frac{\sqrt{N}}{Np_N}\Big)$. Since $W_N$ converges in cut norm to $W$, it follows using \cite[Section 11.6]{Lovasz2012} that
	\[\lim_{N\rightarrow\infty}\lambda_2(A_N)=a^{-1}\lim_{N\rightarrow\infty}\frac{\lambda_2(W_N)}{N}=a^{-1}\lambda_2(W)<1\]
	and so $A_N$ satisfies \eqref{eq:well_connect}. By our results, universal fluctuations hold for $\oms$ as soon as \eqref{eq:d_reg} holds.

	\item[(e)]{\bf Block spin Ising model:} 
	Suppose that $N$ is even, and
	\begin{align*}
	A_N(i,j)=&a_N\text{ if }i,j\le N/2\text{ or }i,j>N/2,\\
	=&b_N\text{ if }i\le N/2, j>N/2,\text{ or }i>N/2, j\le N/2.
	\end{align*}
	$A_N$ can be thought of as the expectation of a stochastic block model with 2 communities. In the particular case $a_N=\frac{\beta}{N}, b_N=\frac{\alpha}{N}$, this model has been studied in \cite{Berthet2019,Lowe2018} under the name block spin Ising model. Again in this case universal asymptotics holds for $\oms$ as soon as $d_N:=\frac{N(a_N+b_N)}{2}$ satisfies \eqref{eq:d_reg}, and $\liminf_{N\rightarrow\infty}\frac{b_N}{a_N}>0$. This in particular matches the results obtained from~\cite[Theorems 1.2, 1.4]{Lowe2018} which studies the sub parameter regime $\Theta_{11}\cup \Theta_3$. Our results apply to the whole parameter regime of $(\beta,B)$ and a wide regime of scalings of $(a_N,b_N)$,  providing explicit convergence rates. Similar extension holds when the matrix $A_N$ has more than 2 groups as well.

	%	\begin{align*}
	%	\max_{1\le i\le N}|R_i-1|=O_P\Big(\sqrt{\frac{\log N}{Np_N}}\Big),\quad
	%	\Big|\sum_{i=1}^N(R_i-1)\Big|=O_P\Big(\frac{1}{\sqrt{p_N}}\Big),\quad \sum_{i=1}^N(R_i-1)^2=O_P\Big(\frac{1}{p_N}\Big).
	%	\end{align*}

	\item[(f)]{\bf Wigner matrices:}
	To demonstrate that our techniques apply to examples well beyond scaled adjacency matrices, let  $A_N$ be a Wigner matrix with its entries $\{A_N(i,j),1\le i<j\le N\}$ i.i.d. from a distribution $F$ scaled by $N\mu$, where $F$ is a distribution on non-negative reals with finite exponential moment and mean $\mu>0$. In this case we have
	\begin{align*}
	\max_{1\le i\le N}|R_i-1|=O_P\Big(\sqrt{\frac{\log N}{N}}\Big),\quad
	\Big|\sum_{i=1}^N(R_i-1)\Big|=O_P(1),\quad \sum_{i=1}^N(R_i-1)^2=O_P(1).
	\end{align*}
	Also~\cite[Corollary 3.5]{Afonso2016} shows that $\lVert A_N-\frac{1}{N}{\bf 1}{\bf 1}^\top\rVert_{\text{op}}=N^{-1/2}$, and so \eqref{eq:well_connect} holds. Thus our theorems apply giving universal fluctuations for $\oms$. 

\end{enumerate}

\section{Main technical lemmas}\label{sec:maaintechlem}
In this section, we state our main technical lemmas which could be of independent interest. Our first result in this section is an exponential moment control lemma in all parameter regimes,  which is one of the main estimates of this paper, and is itself new. The proof of this is %based on known Hanson-Wright inequality, and is 
deferred to Section \ref{sec:maintechlempf}.

\begin{lemma}\label{lem:qformuni}
	Suppose $\ms$ is an observation from~\eqref{eq:model}, with $A_N$ satisfying \eqref{eq:A1} and \eqref{eq:A2}. 
	\begin{enumerate}
		\item[(a)]
		If $(\beta,B)\in \Theta_1$, then there exists a fixed positive number $\delta>0$  such that
		\begin{align}\label{eq:errbound}
		\log\E\left[\exp\left(\frac{\delta}{2}\sum_{i=1}^N (m_i(\ms)-t)^2\right)\right]\lesssim\fa+t^2\sum_{i=1}^N (R_i-1)^2.	\end{align}
		%where $\eta_N:=$.%+t^2|\sum_{i=1}^N (R_i-1)|.
		\item[(b)]
		If $(\beta,B)\in  \Theta_2$, then the conclusion of part (a) holds under the additional assumption that $A_N$ satisfies \eqref{eq:well_connect}. % there exists a fixed positive number $\delta>0$ such that
		%	\begin{align}\label{eq:qformuni}
		%	\log\E\left[\exp\left(\frac{\delta}{2}\sum_{i=1}^N (m_i(\ms)-M(\ms))^2\right)\right]\lesssim \fa+\sum_{i=1}^N (R_i-1)^2+\Big|\sum_{i=1}^N(R_i-1)\Big|.
		%	\end{align}
		\item[(c)]
		If $(\beta,B)\in  \Theta_3$, then under the additional assumption that $A_N$ satisfies \eqref{eq:well_connect} there exists a fixed positive number $\delta>0$ such that 
		\begin{align}\label{eq:critqformuni}
		&\;\;\;\;\log\E\left[\exp\left(\frac{\delta}{2}\sum_{i=1}^N (m_i(\ms)-\mm)^2\right)\right]\nonumber \\& \lesssim  \fa+\frac{1}{N}\Big[\sum_{i=1}^N(R_i-1)^2\Big]^2+\frac{1}{N}\Big[\sum_{i=1}^N(R_i-1)\Big]^2+\log N,
		\end{align}
		where $\mm:= N^{-1}\sum_{i=1}^N m_i(\ms)$.
	\end{enumerate}
	
	%where $m_i(\ms)$ is as defined in~\cref{sec:prelim}.
\end{lemma}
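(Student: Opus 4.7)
My plan is to exploit the martingale-type identity $\E[\sigma_i \mid \ms_{-i}] = \tanh(\beta m_i(\ms) + B)$, which is valid because $A_N$ has zero diagonal, together with the fixed-point equation $g(t) = t$ for $g(x) := \tanh(\beta x + B)$. Setting the innovations $u_i := \sigma_i - g(m_i(\ms))$, so that $\E[u_i \mid \ms_{-i}] = 0$, and Taylor expanding $g$ at $t$, one obtains the self-consistent identity
$$(I - g'(t) A_N)\,\mathbf{v} = A_N \mathbf{u} + t(\mathbf{R} - \mathbf{1}) + \mathbf{E}(\ms),$$
where $\mathbf{v}_i := m_i(\ms) - t$ and $\mathbf{E}$ is a second-order Taylor remainder bounded pointwise by a quadratic expression in $\mathbf{v}$. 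All three parts revolve around inverting this identity and controlling the RHS in exponential moment.

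For part (a), \cref{lem:fixsol} gives $g'(t) = \beta(1-t^2) < 1$, which together with \eqref{eq:A2} makes $I - g'(t) A_N$ invertible with the operator norm of its inverse uniformly bounded. I would split into the ``good'' event $\{\max_i |m_i - t| \le \epsilon\}$, where the remainder $\mathbf{E}$ can be absorbed into the LHS for $\epsilon$ fixed and small, and its complement, whose probability is exponentially small by \cref{lem:auxtail}(b). On the good event, inversion yields $\sum_i (m_i - t)^2 \lesssim \|A_N \mathbf{u}\|_2^2 + t^2 \sum_i(R_i - 1)^2$. To bound $\E \exp(\tfrac{\delta}{2}\|A_N \mathbf{u}\|_2^2)$, I would use Gaussian decoupling: write $\exp(\tfrac{\delta}{2}\|A_N \mathbf{u}\|_2^2) = \E_{\mathbf{Z}} \exp(\sqrt{\delta}\, \mathbf{Z}^\top A_N \mathbf{u})$ for $\mathbf{Z} \sim \mathcal{N}(0, I_N)$ independent of $\ms$, and then apply \cref{lem:auxtail}(b) conditionally on $\mathbf{Z}$ with $\mathbf{c} = \sqrt{\delta}\, A_N^\top \mathbf{Z}$. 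This reduces the problem to a Gaussian chaos $\E_\mathbf{Z} \exp(C\delta\, \mathbf{Z}^\top A_N A_N^\top \mathbf{Z})$, whose logarithm is $\lesssim \delta \|A_N\|_F^2$ provided $\delta$ is smaller than a fixed constant depending on $1-g'(t)$ and $\limsup \lambda_1(A_N)$.

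For part (b), $g$ has two attracting fixed points $\pm t$, so the linearization only holds locally. I would use well-connectedness \eqref{eq:well_connect} to show that the top eigenvector of $A_N$ is asymptotically $\mathbf{1}/\sqrt{N}$, forcing the spins to align coherently so that $\oms$ concentrates near $\pm t$. Conditioning on the sign of $\oms$ (i.e., working around $M(\ms)$) reduces to a uniqueness-like regime around $\pm t$, and the argument of part (a) carries through verbatim with $t$ replaced by $M(\ms)$, using the exponential smallness of the ``cross-well'' event $\{|\oms|\le \epsilon\}$ obtained from well-connectedness.

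Part (c) is the main obstacle. Here $g'(0) = \beta = 1$, so $I - g'(0) A_N = I - A_N$ loses its spectral gap, with the (approximate) kernel spanned by $\mathbf{1}$. The remedy is to center around $\mm = N^{-1} \sum_i m_i$, thereby projecting the self-consistent identity onto $\mathbf{1}^\perp$, where well-connectedness \eqref{eq:well_connect} restores a spectral gap of size $1 - \limsup \lambda_2(A_N)/\lambda_1(A_N)$. Because $g''(0) = 0$ at criticality, I would Taylor expand to third order, exploiting the cubic decay of $g(x) - x$ near zero to absorb the Taylor remainder into the contraction after a truncation on $\{\max_i |m_i|\le \epsilon\}$. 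The extra $\log N$ term in \eqref{eq:critqformuni} arises from the fact that the fluctuations of $\mm$ itself live on the critical $N^{-1/4}$ scale and must be handled by polynomial moment bounds and a logarithmic union bound, rather than by the Gaussian tail estimates that suffice in parts (a) and (b).
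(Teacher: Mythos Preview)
Your approach is genuinely different from the paper's, and it contains a real gap.

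\textbf{What the paper does.} The paper never works directly under $\P$. Instead, for part (a) it invokes Theorem~\ref{thm:conc}(a) (proved earlier, independently of this lemma, via the Hanson--Wright inequality~\ref{lem:tailsubG} applied to the Radon--Nikodym derivative $d\P/d\mmq$) to transfer the exponential-moment computation to the i.i.d.\ product measure $\mmq$. Under $\mmq$ the identity $m_i(\ms)-t=\sum_jA_N(i,j)(\sigma_j-t)+t(R_i-1)$ gives $\sum_i(m_i-t)^2\le 2(\ms-t\mathbf 1)^\top A_N^2(\ms-t\mathbf 1)+2t^2\sum_i(R_i-1)^2$, and the quadratic form is handled by one clean application of Lemma~\ref{lem:tailsubG} with $D_N=\delta A_N^2$. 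Parts (b) and (c) are analogous: transfer to the Curie--Weiss measure, use the mixture representation (Proposition~\ref{prop:CWbasic}) to condition on $\tw_N$ so that the spins become i.i.d., then Hanson--Wright again. No self-consistent inversion, no good/bad splitting.

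\textbf{Where your argument breaks.} Your absorption of the Taylor remainder $\mathbf E(\ms)$ into the left side requires the event $G_\epsilon:=\{\max_i|m_i-t|\le\epsilon\}$, and you claim $\P(G_\epsilon^c)$ is exponentially small by Lemma~\ref{lem:auxtail}(b). That lemma controls $\sum_j c_j(\sigma_j-\tanh(\beta m_j+B))$, hence $(A_N\mathbf u)_i=m_i-\sum_jA_N(i,j)g(m_j)$; it does \emph{not} control $m_i-t$. To pass from one to the other you need $|\sum_jA_N(i,j)(g(m_j)-t)|\le \text{(contraction factor)}\cdot\max_j|m_j-t|$. Globally, the Lipschitz constant of $g$ is $\sup_x g'(x)=\beta$, so the iteration contracts only if $\beta\max_iR_i<1$, i.e.\ essentially only in $\Theta_{11}$ (this is exactly the argument in Lemma~\ref{lem:unifbd}(a)). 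In $\Theta_{12}$ with $\beta>1$, and in $\Theta_2,\Theta_3$, there is no global contraction, and the local contraction $g'(t)<1$ is only available \emph{after} you know $m_j$ is near $t$---which is what you are trying to prove. Consequently you cannot show $\P(G_\epsilon^c)\le e^{-cN}$ from Lemma~\ref{lem:auxtail} alone, while on $G_\epsilon^c$ the integrand $\exp(\tfrac\delta2\sum_iv_i^2)$ can be as large as $e^{c'\delta N}$. The bad-event contribution therefore dominates and the bound fails.

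Your Gaussian-decoupling computation for $\E\exp(\tfrac\delta2\|A_N\mathbf u\|_2^2)\lesssim e^{C\fa}$ is correct and elegant; the gap is solely in linearizing away the nonlinearity of $g$. The paper sidesteps this entirely by changing to a product measure first, so that no linearization is needed and Hanson--Wright applies to the exact quadratic form $(\ms-t\mathbf 1)^\top A_N^2(\ms-t\mathbf 1)$.
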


Our next lemma establishes a uniform control on the $m_i(\ms)$'s and a second moment bound on a linear statistic of interest, when $(\beta,B)\in\Theta_{11}$. The proof of this lemma is deferred to~\cref{sec:prooflem2}.

\begin{lemma}\label{lem:unifbd}
	Assume that $\ms$ is an observation from \eqref{eq:model} with $(\beta,B)\in\Theta_{11}$, and $A_N$ satisfies \eqref{eq:A3}. Setting $\alpha_N=\max_{1\le i\le N}\sum_{j=1}^NA_N(i,j)^2$ as in~\cref{theo:uniq1}, the following conclusions hold:
	\begin{enumerate}
		\item[(a)]
		$\log \mmp\big(\max_{1\leq i\leq N} |m_i(\ms)|\geq \lambda\sqrt{\alpha_N\log N}\big)\lesssim - \lambda^2,$ for any $\lambda>0$.
		
		\item[(b)] $\mme\left[\sum_{i=1}^N (R_i-1)\sigma_i\right]^2\lesssim  \left(\sum_{i=1}^N (R_i-1)^2\right)\Big[1+\fa \alpha_N^2(\log N)^2 \Big].$	
	\end{enumerate}
\end{lemma}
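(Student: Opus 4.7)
For part (a), I would exploit the mean-field identity $\mme[\sigma_j \mid \sigma_{-j}] = \tanh(\beta m_j(\ms))$ (valid in $\Theta_{11}$ since $B=0$). Decompose $m_i(\ms) = X_i + Y_i$ with $X_i = \sum_j A_N(i,j)(\sigma_j - \tanh(\beta m_j(\ms)))$ and $Y_i = \sum_j A_N(i,j)\tanh(\beta m_j(\ms))$. Since $|\tanh(x)| \le |x|$, $|Y_i| \le \beta R_i \max_k |m_k(\ms)|$. By \eqref{eq:A3} and $\beta<1$, there exists $\kappa>0$ with $\beta \max_i R_i \le 1-\kappa$ for $N$ large, so $\max_i |m_i(\ms)| \le \kappa^{-1} \max_i |X_i|$. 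Lemma \ref{lem:auxtail}(b) applied with coefficients $c_j = A_N(i,j)$ gives $\log \mmp(|X_i| > s) \lesssim -s^2/\alpha_N$, and a union bound over $i$ completes the claim.

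For part (b), I would prove the stronger statement: for any $\vec{h} \in \R^N$, $\mme (\sum_i h_i \sigma_i)^2 \lesssim \|\vec{h}\|^2(1 + \alpha_N^2 (\log N)^2 \fa)$, then specialize to $\vec{h} = (R_i - 1)_{i=1}^N$. Set $S_{\vec{h}} := \sum_i h_i \sigma_i$ and $V_{\vec{h}} := \sum_i h_i \tanh(\beta m_i(\ms))$. Conditioning on $\sigma_{-i}$ yields the Stein-type identity $\mme S_{\vec{h}}^2 = \mme[S_{\vec{h}} V_{\vec{h}}] + \sum_i h_i^2 (1-\mme\tanh^2(\beta m_i(\ms)))$; AM-GM on the cross term gives $\mme S_{\vec{h}}^2 \le \mme V_{\vec{h}}^2 + 2\|\vec{h}\|^2$. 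Taylor-expanding $\tanh(\beta m_i) = \beta m_i + r_i$ with $|r_i| \le (\beta m_i)^3/3$ gives $V_{\vec{h}} = \beta (A_N \vec{h})^\top \ms + R_{\vec{h}}$, and Cauchy-Schwarz yields $R_{\vec{h}}^2 \lesssim \max_i m_i(\ms)^4 \cdot \|\vec{h}\|^2 \sum_i m_i(\ms)^2$. Integrating the tail from part (a) produces $\mme \max_i m_i(\ms)^8 \lesssim \alpha_N^4 (\log N)^4$, while Lemma \ref{lem:qformuni}(a) at $t=0$ provides sub-exponential control of $\sum_i m_i(\ms)^2$ whence $\mme(\sum_i m_i(\ms)^2)^2 \lesssim \fa^2$. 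A further Cauchy-Schwarz then delivers $\mme R_{\vec{h}}^2 \lesssim \|\vec{h}\|^2 \alpha_N^2 (\log N)^2 \fa$.

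Combining the pieces via $(a+b)^2 \le (1+\varepsilon)a^2 + (1+\varepsilon^{-1})b^2$ yields the recursion
\[\mme S_{\vec{h}}^2 \le 2\|\vec{h}\|^2 + (1+\varepsilon)\beta^2 \mme S_{A_N \vec{h}}^2 + C_\varepsilon \|\vec{h}\|^2 \alpha_N^2 (\log N)^2 \fa.\]
Iterating with $\vec{h} \mapsto A_N^k \vec{h}$, using $\|A_N^k \vec{h}\| \le \lambda_1(A_N)^k \|\vec{h}\|$, and controlling the residual by the trivial estimate $\mme S_{A_N^K \vec{h}}^2 \le N \|A_N^K \vec{h}\|^2$, one obtains a geometric series with ratio $(1+\varepsilon)\beta^2 \lambda_1(A_N)^2$. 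Since $\beta<1$ and $\lambda_1(A_N) \to 1$ by \eqref{eq:A2}, $\varepsilon$ can be fixed small so this ratio is strictly less than $1$ for $N$ large, and summing yields the claim. The main obstacle is closing this iteration: the $(1+\varepsilon)$ inflation from AM-GM on the Taylor error multiplies $\beta^2$, so the contraction would fail at criticality, and convergence truly hinges on the strict subcriticality $\beta<1$ combined with the sharp eigenvalue condition \eqref{eq:A2}.
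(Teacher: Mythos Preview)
Your proof is correct and follows essentially the same strategy as the paper. Part (a) is identical: decompose $m_i$ into the martingale-like piece $X_i=\sum_j A_N(i,j)(\sigma_j-\tanh(\beta m_j))$ controlled by Lemma~\ref{lem:auxtail}(b) plus a union bound, and the contraction piece $Y_i$ bounded by $\beta(\max_i R_i)\max_k|m_k|$, then use $\beta<1$ and \eqref{eq:A3}.

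For part (b), both arguments prove the bound for a general vector $\mathbf{c}$, iterate $\mathbf{c}\mapsto \beta A_N\mathbf{c}$, Taylor-expand $\tanh(\beta m_i)=\beta m_i+O(m_i^3)$, and control the cubic remainder via Cauchy--Schwarz using part (a) for $\E\max_i m_i^8$ and Lemma~\ref{lem:qformuni}(a) for $\E(\sum_i m_i^2)^2$; both then close the recursion by the contraction $\beta\lambda_1(A_N)<1$. The one genuine difference is how the recursion is set up: the paper writes $x_\ell=T_{1,\ell}+T_{2,\ell}+T_{3,\ell}$ and handles the cross term $T_{3,\ell}$ by a leave-one-out computation (replacing $m_j$ by $m_j^{i}$), arriving at a square-root recursion closed by backward induction, whereas you use the Stein-type identity $\E S_{\vec h}^2=\E[S_{\vec h}V_{\vec h}]+\sum_i h_i^2(1-\E\tanh^2(\beta m_i))$ together with AM--GM to get the cleaner linear recursion $\E S_{\vec h}^2\le (1+\varepsilon)\beta^2\E S_{A_N\vec h}^2+O(\lVert\vec h\rVert^2)$ and iterate forward. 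Your route is a bit more streamlined here; the paper's version, by avoiding the $(1+\varepsilon)$ inflation, is what allows the same template to be reused at criticality in Lemma~\ref{lem:mombound}, where no slack is available. One small remark: your appeal to \eqref{eq:A2} is unnecessary, since \eqref{eq:A3} already gives $\limsup_N\lambda_1(A_N)\le \limsup_N\max_i R_i=1$, which is all you use.
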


Our final lemma yields uniform control on $\max_{1\leq i\leq N}|m_i(\ms)-\bar{\boldsymbol{m}}(\ms)|$ and moment bounds on linear statistics of interest, when $(\beta,B)\in\Theta_3$. Its proof has been deferred to~\cref{sec:prooflem2}.

\begin{lemma}\label{lem:linfbdgen}
	Suppose $\ms$ is an observation from~\eqref{eq:model} with $(\beta,B)\in \Theta_3$,  such that $A_N$ satisfies \eqref{eq:A4} and \eqref{eq:well_connect}. Suppose further that the RHS of \eqref{eq:critmain} is bounded. Then the following conclusions hold:
	\begin{enumerate}
		\item[(a)]
		$\log  \mmp\bigg(\max\limits_{1\leq i\leq N}|m_i(\ms)-\mm|\geq \lambda\sqrt{\alpha_N( \log N)^3}+\log N\max_{1\le i\le N}|R_i-1|\bigg)\lesssim -\lambda^2$, for any $\lambda>0$.
		
		\item[(b)]
		$\mme\left[\sum_{i=1}^N (R_i-1)\sigma_i\right]^2\lesssim \bigg(\sum_{i=1}^N (R_i-1)^2+N^{-1/2}\Big[\sum_{i=1}^N(R_i-1)\Big]^2\bigg)(\log N)^{4}.$
		
		\item[(c)]
		$N^{3/2}\E(\oms^6)\lesssim 1$.
	\end{enumerate}
	
\end{lemma}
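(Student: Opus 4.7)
The plan is to attack each of the three parts by leveraging the mean-field decomposition $\sigma_j = [\sigma_j - \tanh(\beta m_j(\ms) + B)] + \tanh(\beta m_j(\ms) + B)$, which splits any linear functional of $\ms$ into a ``noise'' piece (with sub-Gaussian tails via Lemma~\ref{lem:auxtail}(b)) and a ``drift'' piece that, in the critical regime $\beta = 1$, $B = 0$, $t = 0$, simplifies under the Taylor expansion $\tanh(m_j(\ms)) = m_j(\ms) - m_j(\ms)^3/3 + O(m_j(\ms)^5)$. The spectral gap \eqref{eq:well_connect} and the exponential moment bound of Lemma~\ref{lem:qformuni}(c) will be the two main technical inputs throughout.

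For part (a), I start from the identity $m_i(\ms) - \mm = \sum_j c_{ij}\sigma_j$ with $c_{ij} := A_N(i,j) - R_j/N$ and $\sum_j c_{ij}^2 \lesssim \alpha_N$ under \eqref{eq:A1}. Plugging in the mean-field decomposition, the noise piece is sub-Gaussian with variance proxy $\alpha_N$ by Lemma~\ref{lem:auxtail}(b), and a union bound over $i$ yields the $\sqrt{\alpha_N \log N}$ rate (mirroring Lemma~\ref{lem:unifbd}(a) for the high-temperature regime). For the drift piece $\sum_j c_{ij}\tanh(m_j(\ms))$, I Taylor expand to linear order; the linear part is $A_N$ acting on the centered vector $\ms - \mm\mathbf{1}$, and the gap condition \eqref{eq:well_connect} provides contraction on the subspace orthogonal to the top eigenvector. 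A bootstrap then substitutes the previous iterate's uniform bound on $\max_j|m_j(\ms) - \mm|$ into the cubic remainder and refines; after $O(\log N)$ iterations the accumulated factors yield the $(\log N)^{3/2}$ power, while the irregularity correction $\log N \cdot \max_i |R_i - 1|$ arises from propagation of the $R_i - \bar R$ mismatch at each step.

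For part (b), I split $\sum_i (R_i - 1)\sigma_i$ along the mean-field decomposition. The noise piece $\sum_i (R_i - 1)[\sigma_i - \tanh(\beta m_i(\ms) + B)]$ has second moment $\lesssim \sum_i(R_i - 1)^2$ directly from Lemma~\ref{lem:auxtail}(b). The drift piece, after Taylor-expanding $\tanh(m_i(\ms))$, reduces to $\sum_i (R_i - 1)m_i(\ms) = \sum_i (R_i - 1)(m_i(\ms) - \mm) + \mm \sum_i(R_i - 1)$; the first summand is handled via Cauchy-Schwarz together with Lemma~\ref{lem:qformuni}(c), while part (a) provides the $L^\infty$ control needed to square inside the expectation, producing the $(\log N)^4$ factor. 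For part (c), the plan is a cavity argument: summing $\sigma_i = \tanh(m_i(\ms)) + [\sigma_i - \tanh(m_i(\ms))]$ and Taylor expanding yields $N(\oms - \mm) = -\tfrac{1}{3}\sum_i m_i(\ms)^3 + (\text{noise}) + O(\sum_i m_i(\ms)^5)$, whose left side equals $-\sum_j (R_j - 1)\sigma_j$ by direct calculation. Using $\sum_i (m_i(\ms) - \mm) = 0$, one factors $\sum_i m_i(\ms)^3 = N\mm^3 + 3\mm\sum_i(m_i(\ms) - \mm)^2 + \sum_i(m_i(\ms) - \mm)^3$, giving an identity for $N\mm^3$ whose terms are controlled by parts (a), (b) and Lemma~\ref{lem:qformuni}(c). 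Squaring, taking expectations, and handling the cubic/quintic remainders via the exponential moment bound yields $N^{3/2}\E\mm^6 \lesssim 1$, which transfers to $\oms$ via $\oms = \mm - N^{-1}\sum_j(R_j - 1)\sigma_j$ together with part (b).

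The main obstacle is the bootstrap in part (a): keeping the accumulation of $\log N$ factors tight at $(\log N)^{3/2}$ while correctly attributing the $\log N \cdot \max_i|R_i - 1|$ correction to the non-regularity of $A_N$ requires a careful spectral analysis on the subspace $\{v : \mathbf{1}^\top v = 0\}$, where $A_N$ is strictly contractive by \eqref{eq:well_connect}. A secondary difficulty appears in part (c), where naively invoking Theorem~\ref{thm:conc}(c) to reduce to Curie-Weiss moments introduces unwanted polylogarithmic slack; the cavity identity is specifically engineered to avoid this by exploiting the exact cancellation between $\oms$ and $\mm$ captured by the $(R_i - 1)$ correction.
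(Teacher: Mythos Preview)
Your outline has the right structural ideas (mean-field decomposition, sub-Gaussian noise, iteration to exploit connectivity) but contains three genuine gaps that prevent it from closing.

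\textbf{Part (a): the contraction claim is false.} You write that ``the gap condition \eqref{eq:well_connect} provides contraction on the subspace orthogonal to the top eigenvector''. But \eqref{eq:well_connect} only bounds $\lambda_2(A_N)$ from above; it says nothing about $\lambda_N(A_N)$, which for a bipartite graph equals $-\lambda_1(A_N)$. So the operator norm of $A_N$ on $\{\mathbf{1}\}^\perp$ can be exactly $1$, and iterating $A_N$ on the centered vector $(m_j-\mm)_j$ need not shrink at all. The paper sidesteps this by passing to the doubly stochastic matrix $\tilde{A}_N$ obtained from $A_N/R_{\max}$ by adding the nonnegative diagonal $1-R_i/R_{\max}$; Lemma~5.2(a) then shows $\max_i \tilde{A}_N^{\ell}(i,i)\le 2/N+2e^{-c\ell}$, which after $\ell=D\log N$ iterations forces $\sum_j \tilde{A}_N^\ell(i,j)^2\lesssim 1/N$ regardless of the sign pattern of the spectrum. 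That mixing estimate, combined with Cauchy--Schwarz and Lemma~\ref{lem:qformuni}(c), is what actually closes the argument.

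\textbf{Part (b): Cauchy--Schwarz on $\sum_i(R_i-1)(m_i-\mm)$ is too weak.} Your plan yields
\[
\E\Big[\sum_i(R_i-1)(m_i-\mm)\Big]^2\le \Big(\sum_i(R_i-1)^2\Big)\,\E\sum_i(m_i-\mm)^2\lesssim \Big(\sum_i(R_i-1)^2\Big)\,\varepsilon_N,
\]
and under the standing hypothesis $\varepsilon_N$ is only $O(\sqrt{N})$, not $(\log N)^4$. The paper instead runs a recursion on $x_\ell:=\E\big[(\mathbf{c}^{(\ell)})^\top\ms\big]^2$ with $\mathbf{c}^{(\ell)}=\mathbf{c}^\top\tilde{A}_N^\ell$ and $\mathbf{c}=\mathbf{R}-\mathbf{1}$; each step contributes $O(\sqrt{\mu_N}\,\|\mathbf{c}\|_2)$ to $\sqrt{x_\ell}$, and after $L=D(\log N)^2$ steps Lemma~5.2(b) reduces $\mathbf{c}^{(L)}$ to a combination of $\mathbf{1}$ and the bottom eigenvector $\tilde{\mathbf{q}}_N$, each of which is handled directly. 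The $(\log N)^4$ is exactly $L^2$.

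\textbf{Circularity between (b) and (c).} Your cavity proof of (c) invokes (b) to control $\E\big[\sum_j(R_j-1)\sigma_j\big]^2$, but your argument for (b) needs a bound on $\E\,\mm^2$ to handle the term $\mm\sum_i(R_i-1)$ --- and that is essentially (c). The paper breaks this loop by proving a \emph{pair} of coupled inequalities (Lemma~4.1): one is \eqref{eq:contrcrit}, which bounds $\E[\sum(R_i-1)\sigma_i]^2$ by $(\log N)^4\delta_N(1+\E(N^{1/4}\oms)^2)$, and the other (obtained via the exchangeable-pair identity, multiplying $\E[T_N-T_N'\mid\ms]-N^{-3/2}T_N^3/3$ by $|T_N|^3$) bounds $\nu_N=\E(N^{1/4}\oms)^6$ in terms of lower powers of $\nu_N$ plus $\nu_N^{1/2}\sqrt{\E[\sum(R_i-1)\sigma_i]^2/\sqrt{N}}$. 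Substituting one into the other yields $\nu_N\lesssim \nu_N^{2/3}+\nu_N^{1/2}(1+\nu_N^{1/6})$, hence $\nu_N\lesssim 1$, and then (b) follows. Your cavity identity is morally the same cubic relation, but you need to set it up as a self-referential inequality rather than assume (b) as input.
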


\begin{remark}[On the minimum eigenvalue of $A_N$]\label{rem:connected}
	Note that our results work even when $\lambda_N(A_N)\to -1$, as opposed to stronger spectral gap assumptions such as $\max_{2\leq i\leq N} |\lambda_i(A_N)|\to 0$. This has been achieved by a new matrix theoretic estimate (see~\cref{lem:tracebd}) which shows that under~\eqref{eq:well_connect}, $A_N$ can have at most one eigenvalue ``close" to $-1$ (see~\cref{rem:mineigen} for connections to graph theory).
\end{remark}
\section{Proof of main results}\label{sec:mainthpfs}

We first state a lemma which will be needed in all parameter regimes. 

\begin{lemma}\label{lem:auxtail}
	Suppose $\ms$ is an observation from~\eqref{eq:model} for some $A_N$ satisfying \eqref{eq:A1}, and $\beta>0,B\in \R$. 
	\begin{enumerate}
		\item[(a)]
		Recalling that $m_i(\ms)=\sum_{j=1}^NA_N(i,j)\sigma_j$, we have
		\begin{align*}
		\E \Big[\sum_{i=1}^N(\sigma_i-\tanh(\beta m_i(\ms)+B))\tanh(\beta m_i(\ms)+B)\Big]^2\lesssim N.
		\end{align*}
		\item[(b)]
		For any ${\bf c}=(c_1,\cdots,c_n)\in \R^n$ we have
		$$ \log \P\Big(|\sum_{i=1}^Nc_i(\sigma_i-\tanh(\beta m_i(\ms)+B))|>t\Big)\lesssim -\frac{t^2}{||{\bf c}||_2^2}.$$
	\end{enumerate}
\end{lemma}
Here, part (a) follows by invoking~\cite[Lemma 3.2]{ChaDembo2016} and (b) can be obtained by making minor adjustments in the proof of~\cite[Lemma 1]{mukherjee2019testing}.

\subsection{Proof of Theorems~\ref{theo:uniq2}~and~\ref{theo:nouniq}}\label{sec:pfmaintwo}
In this section, we will prove Theorems~\ref{theo:uniq2}~and~\ref{theo:nouniq} using~\cref{thm:conc}, Lemmas~\ref{lem:qformuni},~\ref{lem:auxtail}~and~\cref{prop:CWresmain}. The statement of~\cref{prop:CWresmain} is deferred to~\cref{sec:addresCW} as its scope is limited to the Curie-Weiss model introduced in~\eqref{eq:cw_new}. 

Without loss of generality we may assume that the RHS of \eqref{eq:uniq2main} and \eqref{eq:non_uniq} are bounded by $1$, because otherwise the bound is trivial. Recall the definition of $M(\ms)$ for $(\beta,B)\in\Theta_2$ from~\cref{lem:cw_known} and set $M(\ms)=t$ for $(\beta,B)\in\Theta_1$. We have not made the dependence of $M(\ms)$ on $(\beta,B)$ explicit for notational simplicity. From~\cref{sec:pfideas}, recall that $T_N=\sqrt{N}(\overline{\boldsymbol{\sigma}}-M(\ms))$ and $T_N'=\sqrt{N}(\overline{\boldsymbol{\sigma'}}-M(\ms'))$ where $\ms$ is an observation from the Ising model~\eqref{eq:model}, and $\ms'$ is generated as follows: Let $I$ denote a randomly sampled index from $\{1,2,\ldots ,N\}$. Given $I=i$, replace $\sigma_i$ with an independent $\pm 1$ valued random variable $\sigma'_i$ with mean $\tanh(\beta m_i(\ms)+B)=\E[\sigma_i|(\sigma_j,j\neq i)]$, and let $\ms':=(\sigma_1,\cdots,\sigma_{i-1},\sigma'_i,\sigma_{i+1},\cdots,\sigma_N)$.
%Setting $T_N:=\sqrt{N}(\overline{\boldsymbol{\sigma}}-t)$, note that %Recall that $M(\ms)=\ut$ if $\overline{\ms}\geq 0$ and $M(\ms)=-\ut$ if $\overline{\ms}< 0$. In other words, we are taking the first step of the Glauber dynamics. Set $T_N':= \sqrt{N}(\overline{\ms}'-M(\ms'))$. Note that,

With this setup, a direct computation gives
\begin{align}\label{eq:nonuniquetheo3}
\E[T_N-T_N'|T_N]=\frac{1}{N^{3/2}}\sum_{i=1}^N\E[\sigma_i-\tanh(\beta m_i(\ms)+B)|T_N]-\sqrt{N}\mathbb{E}[M(\boldsymbol{\sigma})-M(\boldsymbol{\sigma'})|T_N],
\end{align}
where the second term in the RHS above can be expanded as
\begin{align}\label{eq:nonuniquetheo4}
&\;\;\;\;\notag\sum_{i=1}^N \tanh(\beta m_i(\ms)+B)\nonumber \\ &=N\tanh(\beta M(\boldsymbol{\sigma})+B)+\beta(1-t^2)\sum_{i=1}^N(m_i(\ms)-M(\ms))+\sum_{i=1}^N\xi_i(m_i-M(\ms))^2\nonumber \\ & = NM(\ms)+\beta(1-t^2)\sum_{i=1}^N (\sigma_i-M(\boldsymbol{\sigma}))+\beta(1-t^2)M(\ms)\sum_{i=1}^N (R_i-1)\nonumber \\ &\qquad +\beta(1-t^2)\sum_{i=1}^N (R_i-1)(\sigma_i-M(\boldsymbol{\sigma}))+\sum_{i=1}^N \xi_i(m_i(\ms)-M(\boldsymbol{\sigma}))^2
\end{align}
for random variables $(\xi_i)_{1\le i\le N}$ satisfying $\max_{1\leq i\leq N}|\xi_i|\lesssim 1$, where the second line uses the identity $M(\ms)=\tanh(\beta M(\ms)+B)$. Setting $h_i=\beta(1-t^2)(R_i-1)$ and plugging~\eqref{eq:nonuniquetheo4} into~\eqref{eq:nonuniquetheo3} %, using the identity $\tanh(\beta M(\ms)+B)=M(\ms)$ 
we get
\begin{align}\label{eq:nonuniquetheo5}
\mathbb{E}[T_N-T_N'|T_N]&=\underbrace{\frac{T_N}{N}(1-\beta(1-t^2))}_{g(T_N)}-\underbrace{\frac{1}{N\sqrt{N}}\sum_{i=1}^N \mathbb{E}[\xi_i(m_i(\ms)-M(\boldsymbol{\sigma}))^2|T_N]}_{H_{1}(T_N)}\nonumber \\&-\underbrace{\frac{1}{N\sqrt{N}}\mathbb{E}\left[\sum_{i=1}^N h_i(\sigma_i-M(\boldsymbol{\sigma}))|T_N\right]}_{H_{2}(T_N)}\nonumber \\&-\underbrace{\sqrt{N}\mathbb{E}[M(\boldsymbol{\sigma})-M(\boldsymbol{\sigma}')|T_N]-N^{-3/2}\beta(1-t^2)M(\ms)\sum_{i=1}^N (R_i-1)}_{H_{3}(T_N)}.
\end{align}
Next, we observe that
\begin{small}
	\begin{align}\label{eq:nonuniquetheo7}
	\limsup_{N\rightarrow\infty} \frac{1}{N}\log{\P\left(|T_N-T_N'|\geq \frac{3}{\sqrt{N}} \right)}&\leq \limsup_{N\rightarrow\infty} \frac{1}{N}\log{ \P\left(\sqrt{N}|M(\ms)-M(\ms')|\geq \frac{1}{\sqrt{N}}\right)}\nonumber \\ &\leq \limsup_{N\rightarrow\infty}\frac{1}{N}\log \P(M(\ms)\ne M(\ms'))<0, 
	\end{align}
\end{small}
where the last inequality for $(\beta,B)\in \Theta_2$ follows on using part (b) of Theorem \ref{thm:conc} with $\mE_N:=\{\sum_{i=1}^N\sigma_i\in \{-2,-1,0,1,2\}\}$ along with part (c) of~\cref{prop:CWresmain} to note that
\begin{align}\label{eq:CWresmain}
\limsup_{N\rightarrow\infty}\frac{1}{N}\log \P(M(\ms)\ne M(\ms'))\le \limsup_{N\rightarrow\infty}\frac{1}{N}\log \P^{CW}(\mE_N)<0.
\end{align}
From~\eqref{eq:nonuniquetheo5}, we choose $g(x)=x(1-\beta(1-t^2))/N$. With this choice, observe that $G(x):= \int_0^x g(y)\,dy=(1-\beta(1-t^2))x^2/2N$. We now set  $c_0:= N/(1-t^2)$ and $c_1:= (2\pi \tau)^{-1/2}$, and note the existence of positive constants $c_2$ and $c_3$ free of $N$ such that assumptions (H1) and (H3) from \cite[Page 465]{Cha2011} are all satisfied.  By a slight variant of~\cite[Theorem 1.2]{Cha2011} (see~\cref{sec:revberry})~and~\eqref{eq:nonuniquetheo7}, we then have
\begin{align}\label{eq:nonuniquetheo6}
\;\;\;\ d_{KS}(T_N,Z_\tau)\lesssim  \mathbb{E}\Bigg|1-&\frac{N}{2(1-t^2)}\mathbb{E}\left[(T_N-T_N')^2|T_N\right]\Bigg|+\frac{c_1\max{(c_3,1)}}{\sqrt{N}}+\frac{\mme|T_N|+1}{\sqrt{N}}\nonumber \\ &+\frac{N}{c_1}\mathbb{E}\bigg[\sum_{a=1}^3|H_{a}(T_N)|\bigg]+\exp(-c_2N).
\end{align}
As we will see later in the proof, the $\exp(-c_2N)$ term is of a smaller order than the other terms in the RHS above. However, we choose to present it in this form so as to emphasize that~\eqref{eq:nonuniquetheo6} does not follow from a direct application of~\cite[Theorem 1.2]{Cha2011}, for $(\beta,B)\in\Theta_2$.

We will now estimate each term in the RHS of~\eqref{eq:nonuniquetheo6}.	Proceeding to control $\E| H_1(T_N)|$ we have
\begin{equation}\label{eq:moment_march}
N\sqrt{N}|H_{1}(T_N)|= \Bigg|\sum_{i=1}^N\mathbb{E}\left(\xi_i(m_i(\ms)-M(\boldsymbol{\sigma}))^2\right)|T_N\Bigg|\lesssim \mathbb{E}\Big(\sum_{i=1}^N (m_i(\ms)-M(\boldsymbol{\sigma}))^2\Big|T_N\Big),
\end{equation}
and so
\begin{align}\label{eq:nonuniquetheo9}
N\sqrt{N}\E|H_{1}(T_N)|\lesssim \E\sum_{i=1}^N\Big(m_i(\ms)-M(\ms)\Big)^2\le \eta_N
\end{align}
using Lemma \ref{lem:qformuni}, with $\eta_N:=\fa+t^2\sum_{i=1}^N (R_i-1)^2$. %denoting the RHS of \eqref{eq:errbound}. %~accordingly as we are in the non-uniqueness or the uniqueness regime. 
Next, we have
\begin{align}\label{eq:moment_march2}
\notag N\sqrt{N}\notag |H_{2}(T_N)|\le& \Bigg|\E\left(\sum_{i=1}^Nh_i\Big(\sigma_i-\tan(\beta m_i(\ms)+B)\Bigg|T_N\right)\Bigg|\\
+&\Bigg|\E\left(\sum_{i=1}^Nh_i\Big(\tanh(\beta m_i(\ms)+B)-\tan(\beta M(\ms)+B)\Bigg)\Bigg|T_N\right)\Bigg|
\end{align}
and so
\begin{align}\label{eq:nonuniquetheo10}
\notag N\sqrt{N}\E |H_{2}(T_N)|\notag	\lesssim & \sqrt{\sum_{i=1}^n h_i^2}+\sqrt{\sum_{i=1}^Nh_i^2}\sqrt{\E \sum_{i=1}^N\Big(m_i(\ms)-M(\ms)\Big)^2}\\
\lesssim & \sqrt{\sum_{i=1}^N(R_i-1)^2}(1+\sqrt{\eta_N})\lesssim \eta_N+\sum_{i=1}^N(R_i-1)^2,
\end{align}
where the penultimate line uses part (b) of Lemma \ref{lem:auxtail}, and the last line again uses \eqref{eq:nonuniquetheo9}.
%	\begin{align}\label{eq:nonuniquetheo11}
%	\notag\E H_{N,3}&:= \mathbb{E}\Bigg|\sum_{i=1}^N\mathbb{E}\left(h_i(\tanh{(\beta M(\boldsymbol{\sigma)+B)}}-\tanh(\beta m_i(\ms)+B))\right)|T_N\Bigg|\\ &\lesssim \sqrt{\sum_{i=1}^n h_i^2}\sqrt{\sum_{i=1}^N \mathbb{E}\left(m_i(\ms)-M(\boldsymbol{\sigma}))^2\right)}\lesssim \sqrt{\eta_N \sum_{i=1}^N(R_i-1)^2}\lesssim \eta_N+\sum_{i=1}^N(R_i-1)^2
%	\end{align}
%	where we have again used  Lemma \ref{lem:qformuni}. 
Also observe that,
\begin{equation}\label{eq:nonuniquetheo12}
N\sqrt{N}| H_{3}(T_N)|\lesssim N^2\mathbb{E}\Big(|M(\boldsymbol{\sigma})-M(\boldsymbol{\sigma}')|T_N\Big)+t\bigg|\sum_{i=1}^N (R_i-1)\bigg|,%\lesssim o(1)+\bigg|\sum_{i=1}^N (R_i-1)\bigg|
\end{equation}
where the first term has an expectation which is exponentially small in $N$ using  \eqref{eq:CWresmain}. Finally we have
\begin{align*}%\label{eq:nonuniquetheo13}
\Bigg|\mathbb{E}\left[1-\frac{N}{2(1-t^2)}(T_N-T_N')^2\bigg|T_N\right]\Bigg| \lesssim \mathbb{E}\Bigg|\mathbb{E}\Big[1-\frac{(\sigma_I-\sigma_I')^2}{2(1-t^2)}\bigg|T_N\Big]\Bigg|+N^2\mathbb{E}[|M(\boldsymbol{\sigma})-M(\boldsymbol{\sigma}')|].
\end{align*}
The second term on the RHS above  is exponentially small, by~\eqref{eq:CWresmain}. For the first term on the RHS, note that:
%$$\E\Big[\frac{(\sigma_I-\sigma_I')^2}{2(1-t^2)}\bigg|T_N\Big]=\frac{1-\sigma_i\tanh(\beta m_i(\ms))+B)}{1-t^2},$$
%$\mme_{\beta,B}^N|\mathbb{E}[1-(\sigma_I-\sigma_I')^2/2(1-t^2)|\boldsymbol{\sigma}]|$ . In this direction, observe that,
\begin{align*}
\mathbb{E}[1-(\sigma_I-\sigma_I')^2/2(1-t^2)|\boldsymbol{\sigma}]&=\frac{1}{N(1-t^2)}\sum_{i=1}^N (\E[\sigma_i\sigma_i'|\ms]-t^2)\nonumber \\&\lesssim N^{-1}\big|\sum_{i=1}^N (\sigma_i\tanh(\beta m_i(\ms)+B)-t^2)\big|.
\end{align*}
As a result we have
\begin{align}\label{eq:nonuniquetheo14}
&\;\;\;\;\mme|\mathbb{E}[1-(\sigma_I-\sigma_I')^2/2(1-t^2)| T_N]|\nonumber \\
&\lesssim\mme\Bigg|\sum_{i=1}^N (\sigma_i-\tanh(\beta m_i(\ms)+B))\tanh(\beta m_i(\ms)+B)\Bigg|+ \frac{1}{\sqrt{N}}\sqrt{\sum_{i=1}^N \mme(m_i(\ms)-M(\ms))^2}\nonumber \\
&\lesssim\frac{1}{\sqrt{N}}+\frac{1}{\sqrt{N}}\sqrt{\sum_{i=1}^N \mme (m_i(\ms)-M(\ms))^2}\le \frac{1+\sqrt{\eta_N}}{\sqrt{N}},
\end{align}
where we have used \eqref{eq:nonuniquetheo9}, and part (a) of Lemma \ref{lem:auxtail}.
We now claim that
\begin{align}\label{lem:secmomoth_2}
\E T_N^2\lesssim 1.
\end{align}
Given this claim, combining the estimates from~\eqref{eq:nonuniquetheo6},~\eqref{eq:nonuniquetheo7},%~\eqref{eq:nonuniquetheo8},
~\eqref{eq:nonuniquetheo9},~\eqref{eq:nonuniquetheo10},%\eqref{eq:nonuniquetheo11},
~\eqref{eq:nonuniquetheo12},%~\eqref{eq:nonuniquetheo13} 
and~\eqref{eq:nonuniquetheo14} we get
$$d_{KS}(T_N,Z_\tau)\lesssim \frac{1}{\sqrt{N}}+\frac{\eta_N}{\sqrt{N}}+\frac{1}{\sqrt{N}}\sum_{i=1}^N(R_i-1)^2+\frac{t}{\sqrt{N}}\Big|\sum_{i=1}^N(R_i-1)\Big|,$$
where $Z_{\tau}$ is defined as in~\cref{lem:cw_known}. The desired bound follows on noting that $\eta_N\gtrsim \fa\gtrsim1$.
\\

It thus suffices to prove \eqref{lem:secmomoth_2}. To this effect, using~\eqref{eq:nonuniquetheo5} we get:
$$\Bigg|\mme[T_N-T_N'|T_N]-\frac{T_N}{N}(1-\beta(1-t^2))\Bigg| \lesssim \sum_{a=1}^3|H_a(T_N)|. $$
By multiplying both sides of the above display by $N T_N$ and taking expectation gives
$$\mme[T_N^2]\lesssim\bigg|\mme[N(T_N-T_N')T_N]\bigg|+ \mme\left[N|T_N|\bigg(\sum_{a=1}^3|H_a(T_N)|\bigg)\right],$$
where we have used the fact that $\beta(1-t^2) < 1$. This follows from~\cref{lem:fixsol}, parts (b) and (c), on noting that $\phi'(t)=1-\beta(1-t^2)$ where $\phi(\cdot)$ is defined as in~\cref{lem:fixsol}. By the exchangeability of $T_N$ and $T_N'$ we have
\begin{align*}
\mme [N(T_N-T_N')T_N]=\mme[N(T_N'-T_N)T_N']=\frac{1}{2}\mme[N(T_N-T_N')^2]\lesssim 1.
\end{align*}
Also, from~\eqref{eq:moment_march},~\eqref{eq:moment_march2}~and~\eqref{eq:nonuniquetheo12}~we have \begin{align*}
N\sum_{a=1}^3\mme [H_a(T_N)]^2\lesssim \frac{\eta_N+\sum_{i=1}^N(R_i-1)^2}{\sqrt{N}}\lesssim 1,
\end{align*}
where the last bound uses the fact that the RHS of \eqref{eq:uniq2main} and \eqref{eq:non_uniq} are bounded.
Using Chebyshev's inequality then gives
\begin{align*}
\mme(T_N^2)\lesssim 1+\sqrt{\E(T_N^2)} \sqrt{\sum_{a=1}\E(N H_a(T_N))^2}\lesssim 1+\sqrt{\E(T_N^2)}
\end{align*}
which implies $\mme(T_N^2)\lesssim1$.
This verifies \eqref{lem:secmomoth_2}, and hence completes the proof of the theorem.

\subsection{Proof of Theorem \ref{theo:uniq1}}
\noindent We will now prove Theorem \ref{theo:uniq1} using Lemmas~\ref{lem:qformuni}~and~\ref{lem:unifbd} whose proofs have been deferred to~\cref{sec:prooflem2}. %The first lemma gives a control on $\max_{1\le i\le N}|m_i(\sigma)|$.  

%\begin{remark}
%We note that in part (a) of the above Lemma, for exactly $d_N$ regular graphs we can actually get the term $d_N^{-1/2}$ in place of $N^{-1/6}$, which is optimal in this case. For part (b) of the above lemma, the extra term $(\log N)^{3/2}$ seems to be a proof artifact.
%\end{remark}

\begin{proof}
	Without loss of generality we can assume that the RHS of \eqref{eq:uniq1main} is bounded as before. As in the proof of the previous theorems, it suffices to bound the RHS of \eqref{eq:nonuniquetheo6}, but with $t=M(\ms)=0$ which implies $H_3(T_N)=0$. To begin, use \eqref{eq:nonuniquetheo14} to get
	\begin{align}\label{eq:nonuniquetheo15}
	\E \Bigg|\mathbb{E}\left[1-\frac{N}{2}(T_N-T_N')^2\big|T_N\right]\Bigg|\lesssim \frac{\fa}{N}+\frac{1}{\sqrt{N}},
	\end{align}
	using \eqref{eq:errbound}, which allows us to replace $\eta_N$ in the previous proof by $\fa$. Proceeding to bound $\E|H_1(T_N)|$, use the first equality of \eqref{eq:moment_march} along with Cauchy-Schwarz inequality to note that
	%Recall the $C_i$'s from~\eqref{eq:nonuniquetheo5}. The crucial difference in this setting is to use the fact that $\mme_{\beta}^N[\max_{1\leq i\leq N} |C_i|^2]\lesssim \mme_{\beta}^N[\max_{1\leq i\leq N} |m_i(\ms)|^2]\lesssim N^{-1/3}\log{N}$ (which is a consequence of~\cref{lem:unifbd}). The bound from~\eqref{eq:nonuniquetheo7} holds as it is. Also, in this case, $\ut=0$ which implies $r_2(T_N)=0$. By~\cref{lem:secmomoth}, $\mme_{\beta}^N[T_N^2]=O(1)$ and by the same calculation as in~\eqref{eq:nonuniquetheo13}, we have:
	%	\begin{align}\label{eq:nonuniquetheo15}
	%	&\;\;\;\;
	%	\mathbb{E}\Bigg|\mathbb{E}\left[\mathbb{E}\left[1-\frac{N}{2}(T_N-T_N')^2|\tilde{\sigma}\right]\big|T_N\right]\Bigg| \lesssim \frac{1}{N}\mathbb{E}_{\beta}^N\Bigg|\sum_{i=1}^N \sigma_i\tanh(\beta m_i(\ms))\Bigg|\nonumber \\& \lesssim\frac{1}{N}\left(\sum_{i=1}^N \mme_{\beta}^N[m_i(\ms)^2]+\mme_{\beta}^N\Bigg|\sum_{i=1}^N (\sigma_i-\tanh(\beta m_i(\ms)))\tanh(\beta m_i(\ms))\Bigg|\right)\lesssim N^{-1}\fa+N^{-1/2}
	%	\end{align}
	%where the last line follows from Lemmas~\ref{lem:auxtail}~and~\ref{lem:qformuni}. 
	%It only remains to control $r_1(T_N)$. Towards this direction, observe that:
	\begin{align}\label{eq:nouniquetheo16}
	N\sqrt{N}\mme|H_1(T_N)|&\lesssim \E \max_{1\le i\le N}|m_i(\ms)| \sum_{i=1}^Nm_i(\ms)^2\nonumber \\ &\le \sqrt{\E\max_{1\le i\le N}m_i(\ms)^2}\sqrt{\E\Big(\sum_{i=1}^Nm_i(\ms)^2\Big)^2}\lesssim \fa\sqrt{\alpha_N\log N},
	\end{align}
	where the last inequality uses part (a) of Lemma \ref{lem:unifbd}.	Finally, for $\E|H_2(T_N)|$ we have
	\begin{align}
	N\sqrt{N}\E|H_2(T_N)|\le  \mme\Bigg|\sum_{i=1}^N (R_i-1)\sigma_i\Bigg|\nonumber  \lesssim  \sqrt{(\sum_{i=1}^N (R_i-1)^2)}\Big[1+\lVert A_N\rVert \alpha_N\log N \Big],
	\end{align}
	where we use part (b) of Lemma~\ref{lem:unifbd}. Plugging in the above bounds in~\eqref{eq:nonuniquetheo6}, we have
	\begin{align*}
	d_{KS}(T_N,Z_\tau)\lesssim \frac{1+\E(T_N^2)}{\sqrt{N}}+\frac{\fa\sqrt{\alpha_N\log N}}{\sqrt{N}}+\Big[1+\lVert A_N\rVert \alpha_N\log N \Big]\sqrt{\frac{\sum_{i=1}^N (R_i-1)^2}{N}},
	\end{align*}
	with $Z_{\tau}$ defined as in~\cref{lem:cw_known}. The claimed bound follows immediately, if we can verify \eqref{lem:secmomoth_2}. But the proof of this is the same as in the previous theorem, and so we are done.
\end{proof}

\subsection{Proof of Theorem \ref{theo:crit}}
In this section, we will use Lemmas~\ref{lem:qformuni}~and~\ref{lem:linfbdgen} to prove~\cref{theo:crit}. The proofs of the aforementioned lemmas are presented in~\cref{sec:prooflem2}.

\begin{proof}
	With $(\ms,\ms')$ the usual exchangeable pair, setting $T_N:= N^{1/4}\overline{\ms}$ and $T_N:= N^{1/4}\overline{\ms}'$ we have
	\begin{align*}
	\mme[T_N-T_N'|\ms]&=N^{-3/4}(\overline{\ms}-\tanh(\overline{\ms}))+N^{-3/4}(\tanh(\overline{\ms})-\tanh(\mm))\\ &\qquad +N^{-7/4}\sum_{i=1}^N (\tanh(m_i(\ms))-\tanh(\mm)).
	\end{align*} 
	Using Taylor's expansion, this gives
	%By a standard Taylor series expansion of $\tanh(x)$, the above equality immediately implies that there exists universal constants $C_1$, $C_2$ and $C_3$ such that:
	\begin{align}\label{eq:reflat}
	&\;\;\;|\mme[T_N-T_N'|\ms]-N^{-3/4}(\overline{\ms}-\tanh(\overline{\ms}))|\nonumber \\&\lesssim N^{-3/4}|\overline{\ms}-\mm|+N^{-7/4}|\overline{\ms}|\sum\limits_{i=1}^N (m_i(\ms)-\mm)^2+N^{-7/4}\Bigg|\sum\limits_{i=1}^N (m_i(\ms)-\mm)^3\Bigg|,
	\end{align} 
	and so we have
	%We will borrow the notation from~\cref{lem:mombound} (please refer back to that proof for details). Let us start with~\eqref{eq:mombdlateq}. Note that it implies 
	$\mme[T_N-T_N'|T_N]=g(T_N)+H(T_N)$ where $g(x)=N^{-3/2}x^3/3$, and $H(T_N)$ satisfies  
	\begin{align*}
	\mme[|H(T_N)|]&\lesssim N^{-2}\mme\big[|T_N|^5\big]+N^{-3/4}\mme\big[|\overline{\ms}-\mm|\big]+N^{-2}\mme\left[|T_N|\sum_{i=1}^N (m_i(\ms)-\mm)^2\right]\\ &+N^{-7/4}\mme\left[\bigg|\sum_{i=1}^N (m_i(\ms)-\mm)^3\bigg|\right].
	\end{align*}
	Invoking~\cite[Theorem 1.2]{Cha2011} with $G(x):=\int_0^x g(t)\,dt=N^{-3/2}x^4/12$ we have
	\begin{align}\label{eq:theocrit}
	&\;\;\;\ d_{KS}(T_N,W)\lesssim  \mathbb{E}\Bigg|1-\frac{N^{3/2}}{2}\mathbb{E}\left[(T_N-T_N')^2|T_N\right]\Bigg|+N^{3/2}\mme[|H(T_N)|]+N^{-3/4}\mme|T_N|^3.
	\end{align} By part (c) of~\cref{lem:linfbdgen} we have $\mme[|T_N|^5]\lesssim 1$. Set 
	$$\delta_N:= \sum_{i=1}^N (R_i-1)^2+N^{-1/2}\Big[\sum_{i=1}^N(R_i-1)\Big]^2,$$%\quad \varepsilon_N:= \fa+\frac{1}{N}\sum_{i=1}^N(R_i-1)^2+\frac{1}{N}\Big[\sum_{i=1}^N(R_i-1)\Big]^2+\log{N}.\]
	and use part (b) of~\cref{lem:linfbdgen} and the Cauchy-Schwarz inequality to get
	\begin{align*}
	\mme\big[|\overline{\ms}-\mm|\big]\lesssim \sqrt{\mme(\overline{\ms}-\mm)^2}\lesssim N^{-1}(\log N)^{2}\sqrt{\delta_N}.
	\end{align*} 
	Similarly, by the Cauchy-Schwarz inequality and part (c) of Lemmas~\ref{lem:qformuni} along with part (a) of Lemma~\ref{lem:linfbdgen}, we get:
	\begin{small}
	\begin{align*}
	&\mme\left[|T_N|\sum_{i=1}^N(m_i(\ms)-\mm)^2\right]\le \sqrt{\E(T_N)^2}\sqrt{\E\Big(\sum_{i=1}^n(m_i(\ms)-\mm)^2\Big)^2}\lesssim \varepsilon_N,\\
	& \mme\left[\sum_{i=1}^N |m_i(\ms)-\mm|^3\right]\le \sqrt{\E\max_{1\le i\le N}(m_i(\ms)-\bar{\boldsymbol{m}}(\ms))^2}\sqrt{ \E \Big(\sum_{i=1}^N(m_i(\ms)-\bar{\boldsymbol{m}}(\ms))^2\Big)^2}\lesssim r_N\varepsilon_N,
	\end{align*}
	\end{small}
	where $\varepsilon_N$ is as in the statement of Theorem \ref{theo:crit}.
	Combining the above observations, we get
	\begin{align}\label{eq:combine}
	N^{3/2}\mme[|H(T_N)|]\lesssim N^{-1/2}+N^{-1/4}(\log N)^2\sqrt{\delta_N}+N^{-1/4}r_N\varepsilon_N.
	\end{align} 
	%\emph{Note that the second term on the right hand side of the above display would not arise if $R_i=1$ for all $1\leq i\leq N$}. Next, by another application of~\cref{lem:secmomoth}, $(N^{3/2}/N^{9/4})\mme|T_N|^3\lesssim N^{-3/4}$. We also have by the tower property,
	Finally, we have
	\begin{align}\label{eq:crit1}
	&\;\;\;\;\notag\mme\bigg|1-\frac{N^{3/2}}{2}\mme\left[(T_N-T_N')^2|T_N\right]\bigg|\nonumber \\ &\lesssim \frac{1}{N}\mme\bigg|\sum_{i=1}^N \sigma_i\tanh m_i(\ms)\bigg|\nonumber \\ 
	\notag&\lesssim \frac{1}{N}\mme\bigg|\sum_{i=1}^N (\sigma_i-\tanh m_i(\ms))\tanh m_i(\ms)\bigg|+\frac{1}{N}\E\sum_{i=1}^N\Big(m_i(\ms)-\bar{\boldsymbol{m}}(\ms)\Big)^2+\E\oms^2\nonumber \\ &\lesssim \frac{1}{\sqrt{N}}+\frac{\varepsilon_N}{N}+\frac{1}{\sqrt{N}}
	\end{align}
	where the last inequality follows from part (a) of~\cref{lem:auxtail}, part (c) of~\cref{lem:qformuni},  and part  (c) of~\cref{lem:linfbdgen}. Combing \eqref{eq:combine} and \eqref{eq:crit1} along with \eqref{eq:theocrit} gives 
	$$ d_{KS}(T_N,W)\lesssim \frac{1}{\sqrt{N}}+\frac{\sqrt{\delta_N}(\log{N})^2}{N^{1/4}}+\frac{r_N\varepsilon_N}{N^{1/4}},$$%\lesssim \frac{\sqrt{\delta_N}(\log{N})^2}{N^{1/4}}+\frac{\log N}{\sqrt{N}}\lVert A_N\rVert_F^2,$$
	as desired, with $W$ defined as in~\cref{lem:cw_known}. %where the last inequality uses the fact that $\fa\gtrsim 1$, and $N^{-1/2}\delta_N\lesssim \sqrt{N^{-1/2}\delta_N}$. The claimed bound in the theorem is immediate from this.
	%we have that the quantity on the right hand side of~\eqref{eq:crit1} is bounded by $N^{-1/2}$. Finally plugging all the obtained bounds in~\eqref{eq:theocrit} completes the proof. 
\end{proof}

\section{Proofs of Theorems~\ref{thm:part},~\ref{thm:conc}~and Lemma~\ref{lem:qformuni}}\label{sec:maintechlempf}
We will  need the following proposition which expresses the Curie-Weiss model as a mixture of i.i.d. random variables, first shown in~\cite[Lemma 3]{Mukherjee2018}.
\begin{prop}\label{prop:CWbasic}
	Let $\ms$ be an observation from the Curie-Weiss model in  \eqref{eq:cw_new}.
	% whose p.m.f. can be written as $$\P^{CW}(\ms)=\frac{1}{Z_N^{CW}(\beta,B)}\exp\Big\{N\beta \bar{\sigma}_N^2+B\sum_{i=1}^N\sigma_i\Big\}.$$% with normalizing constant~$Z_N^{CW}(\beta,B)$, 
	Given $\ms$, let $W_N$ be a Gaussian random variable with mean $\bar{\ms}$ and variance $(N\beta)^{-1}$. Then the following conclusions hold:
	\begin{itemize}
		\item[(a)] Given $W_N$, the random variables $(\sigma_1,\sigma_2,\ldots ,\sigma_N)$ are i.i.d. with mean $\tw_N:= \tanh(\beta W_N+B)$.
		\item[(b)] The marginal density of $W_N$ is proportional to $\exp(-Nf(w))$, where $f(w)=\frac{\beta w^2}{2}-\log{\cosh(\beta w+B)}.$
	\end{itemize}
\end{prop}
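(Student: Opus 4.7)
The plan is to prove this via the standard Hubbard--Stratonovich transformation, which linearises the quadratic interaction in the Curie--Weiss Hamiltonian by introducing a Gaussian auxiliary variable. Concretely, I would start from the Gaussian identity
\begin{equation*}
\exp\!\left(\tfrac{N\beta}{2}\bar{\sigma}^2\right)=\sqrt{\tfrac{N\beta}{2\pi}}\int_{\R}\exp\!\left(-\tfrac{N\beta}{2}w^2+N\beta w\bar{\sigma}\right)dw,
\end{equation*}
and substitute it into the definition \eqref{eq:cw_new} of $\P^{CW}$. This produces a natural joint density on $\{-1,1\}^N\times\R$ of the form
\begin{equation*}
p(\ms,w)\propto \exp\!\left(-\tfrac{N\beta}{2}w^2+(\beta w+B)\sum_{i=1}^N\sigma_i\right),
\end{equation*}
whose $\ms$-marginal is exactly $\P^{CW}$ by construction.

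Next I would read off both conditionals from this joint density. For the conditional of $W$ given $\ms$, completing the square in $w$ yields
\begin{equation*}
p(w\mid\ms)\propto\exp\!\left(-\tfrac{N\beta}{2}(w-\bar{\ms})^2\right),
\end{equation*}
so the $W$ appearing in the Hubbard--Stratonovich representation has exactly the Gaussian law specified in the statement of the proposition: mean $\bar{\sigma}_N$ and variance $(N\beta)^{-1}$. This lets me identify $W$ with the $W_N$ of the proposition. For the conditional of $\ms$ given $W=w$, the joint density factorises over coordinates:
\begin{equation*}
p(\ms\mid w)\propto\prod_{i=1}^N\exp\big((\beta w+B)\sigma_i\big),
\end{equation*}
so the $\sigma_i$ are conditionally i.i.d.\ with mean $\tanh(\beta w+B)=\tw_N$, proving part (a).

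For part (b), I would compute the $W$-marginal by summing out the spins:
\begin{equation*}
\sum_{\ms\in\{-1,1\}^N}\exp\!\left((\beta w+B)\sum_{i=1}^N\sigma_i\right)=\big(2\cosh(\beta w+B)\big)^N,
\end{equation*}
so the marginal density of $W_N$ is proportional to
\begin{equation*}
\exp\!\left(-\tfrac{N\beta w^2}{2}\right)\big(2\cosh(\beta w+B)\big)^N\propto \exp(-Nf(w)),
\end{equation*}
with $f$ as defined in the statement. The main (and only) conceptual point is verifying that the joint density obtained from the Hubbard--Stratonovich identity really does have $\P^{CW}$ as its $\ms$-marginal, which is immediate from the identity itself; everything else is a direct calculation, so there is no serious obstacle.
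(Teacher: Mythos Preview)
Your proof is correct and is exactly the standard Hubbard--Stratonovich argument. The paper does not supply its own proof of this proposition; it simply cites \cite[Lemma 3]{Mukherjee2018}, whose proof is precisely the computation you have written out.
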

We state two more lemmas necessary for proving the results of this section, the proofs of which we defer to~\cref{sec:supp}.
The first lemma is a version of the Hanson-Wright inequality, which controls exponential moment of quadratic forms of binary random variables.
\\

\begin{lemma}\label{lem:tailsubG}
	Suppose $X_1,X_2,\ldots ,X_N$, $N\geq 1$ are i.i.d. $\pm 1$ valued random variables such that $\mme[X_1]=\mu$ where $\mu\in (-1,1)$. Define $s_{\mu}:= 2\mu/(\log{(1+\mu)}-\log{(1-\mu)})$ with $s_0$ being $1$. Also assume that $D_N$ is a $N\times N$ symmetric matrix such that $s_\mu \limsup_{N\rightarrow\infty}\lambda_1(D_N) < 1$. Then, given any vector $\mc^{\top}:=(c_1,c_2,\ldots ,c_N)$, we get:
	\begin{equation*}\label{eq:techlem1}
	\log\left\{\mme\left[\exp\left(\frac{1}{2}\sum_{i,j=1}^ND_N(i,j)\tilde{X}_i\tilde{X}_j+\sum_{i=1}^N c_i\tilde{X}_i\right)\right]\right\}\lesssim  \mathrm{Tr}^+(D_N)+\fd+\sum_{i=1}^N c_i^2
	\end{equation*}
	where $\tilde{X}_i=X_i-\mu$ for $1\leq i\leq N$, and $\mathrm{Tr}^+(D_N)=\sum_{i=1}^N\max(D_N(i,i),0)$. %and the hidden constants on the right hand side above only depends on $\rho$.
\end{lemma}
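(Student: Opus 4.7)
The plan is a Hanson-Wright style argument that combines Hubbard-Stratonovich Gaussian linearization of the quadratic form with the Kearns-Saul sub-Gaussian bound for centered $\pm 1$-valued random variables: this is precisely the inequality $\E[e^{t\tilde X_i}] \le e^{s_\mu t^2/2}$ with the same constant $s_\mu$ as in the hypothesis. As a preparatory step I would peel off the diagonal of $D_N$ using the deterministic identity $\tilde X_i^2 = (1-\mu^2) - 2\mu \tilde X_i$ (which follows from $X_i^2 = 1$). The diagonal quadratic contribution becomes the constant $\tfrac{1-\mu^2}{2}\mathrm{Tr}(D_N) \le \tfrac{1}{2}\mathrm{Tr}^+(D_N)$ plus a linear term that is absorbed into $c^\top \tilde X$ at the cost of new coefficients $\tilde c_i = c_i - \mu D_N(i,i)$ satisfying $\sum_i \tilde c_i^2 \lesssim \sum_i c_i^2 + \|D_N\|_F^2$. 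So without loss of generality $D_N$ may be assumed to have zero diagonal.

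Next, eigendecompose $D_N = \sum_k \lambda_k u_k u_k^\top$ and observe that modes with $\lambda_k < 0$ only decrease the exponent, so $\exp(\tfrac{1}{2}\tilde X^\top D_N \tilde X) \le \exp(\tfrac{1}{2}\tilde X^\top D_N^+ \tilde X)$, where $D_N^+$ denotes the positive-spectral part. Apply the Gaussian identity $\exp(\tfrac{1}{2}y^\top A y) = \E_Z[e^{Z^\top y}]$ with $Z \sim N(0, D_N^+)$, swap expectations by Fubini, and apply Kearns-Saul coordinatewise to reduce to the Gaussian integral $\E_Z \exp(\tfrac{s_\mu}{2}|Z + \tilde c|^2)$. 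The hypothesis $s_\mu \limsup_N \lambda_1(D_N) < 1$ ensures $s_\mu \lambda_1(D_N^+) \le \eta < 1$ for large $N$, so this integral converges and can be computed eigendirection-by-eigendirection via $\E[\exp(aW^2/2 + bW)] = (1-a)^{-1/2}\exp(b^2/(2(1-a)))$. This yields a $\tilde c$-quadratic piece bounded by $\tfrac{s_\mu}{2(1-\eta)}|\tilde c|^2 \lesssim \sum c_i^2 + \|D_N\|_F^2$, together with the log-determinant term $-\tfrac{1}{2}\log\det(I - s_\mu D_N^+)$.

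The main obstacle is the log-determinant. Using $-\log(1-x) \le x + x^2/(1-\eta)$ on $[0,\eta]$ bounds it by $s_\mu \sum_k \lambda_k^+ + s_\mu^2/(1-\eta)\sum_k (\lambda_k^+)^2$; the quadratic sum is immediately $\le \|D_N\|_F^2$, but the linear sum $\sum_k \lambda_k^+$ is the delicate piece, since for an arbitrary symmetric matrix it is not dominated by $\mathrm{Tr}^+(D_N) + \|D_N\|_F^2$. After diagonal removal $\mathrm{Tr}(D_N) = 0$ forces $\sum_k \lambda_k^+ = \sum_k \lambda_k^-$, and combined with the operator-norm cap $\lambda_1(D_N^+) \lesssim 1/s_\mu$ from the hypothesis, I would attempt to close the gap via a Cauchy-Schwarz-type tradeoff between $\sum_k \lambda_k^+$ and $\sum_k (\lambda_k^+)^2$ exploiting $\lambda_1(D_N^+) < 1/s_\mu$. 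If this direct route proves fragile, a more robust alternative is Rudelson-Vershynin style decoupling: replace $\tilde X^\top D_N \tilde X$ (with zero diagonal) by $\tilde X^\top D_N \tilde X'$ for an independent copy $\tilde X'$, apply Kearns-Saul to the inner conditional MGF to get a bound of the form $\E_{X'} \exp(c\,\tilde X'^\top D_N^2 \tilde X')$, and close by iterating and summing a geometric series whose ratio is governed by $s_\mu \lambda_1(D_N) < 1$.
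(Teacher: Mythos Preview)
Your overall strategy is close to the paper's, but there is a genuine gap, and it lies exactly where you flag it: the term $\sum_k\lambda_k^+$. The trouble is self-inflicted. When you pass from $D_N$ to its positive spectral part $D_N^+$ (``modes with $\lambda_k<0$ only decrease the exponent''), you throw away the one structural fact that would have closed the argument: after zeroing the diagonal, $\mathrm{Tr}(D_N)=\sum_k\lambda_k=0$. Once $D_N^-$ is discarded, no Cauchy--Schwarz tradeoff between $\sum_k\lambda_k^+$ and $\sum_k(\lambda_k^+)^2$ under merely $\lambda_1\lesssim 1$ can recover a bound by $\lVert D_N\rVert_F^2$. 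For a concrete obstruction, take $D_N$ with $N/2$ eigenvalues equal to $1/N$ and $N/2$ equal to $-1/N$: then $\sum_k\lambda_k^+=1/2$ is of constant order while $\lVert D_N\rVert_F^2=1/N\to 0$.

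The paper sidesteps this by \emph{not} using Hubbard--Stratonovich (which forces you to work with a PSD matrix). Instead it exploits the zero diagonal directly: once $D_N(i,i)=0$, the exponent is \emph{linear} in $\tilde X_i$ conditionally on $(\tilde X_j)_{j\ne i}$, so the Kearns--Saul inequality $\mathbb{E}[e^{\theta\tilde X_i}]\le e^{s_\mu\theta^2/2}=\mathbb{E}[e^{\theta\sqrt{s_\mu}Z_i}]$ lets you replace $\tilde X_i$ by $\sqrt{s_\mu}\,Z_i$ one coordinate at a time, with no positivity requirement on $D_N$. After all $N$ replacements the bound is the Gaussian integral $\mathbb{E}\exp\bigl(\tfrac{s_\mu}{2}Z^\top D_N Z+\sqrt{s_\mu}\,c^\top Z\bigr)$ with the \emph{full} $D_N$, whose logarithm is $-\tfrac12\log\det(I-s_\mu D_N)$ plus an $O(\lVert c\rVert_2^2)$ term. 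Now apply $-\log(1-x)\le x+C_\eta x^2$, valid for every $x\le\eta<1$ (including arbitrarily negative $x$), to \emph{all} eigenvalues: the linear contributions sum to $s_\mu\,\mathrm{Tr}(D_N)=0$, and the quadratic contributions sum to $\lesssim\lVert D_N\rVert_F^2$. Your decoupling fallback would eventually work but is much heavier than needed; the one-line fix is to keep the negative modes and do the coordinatewise Gaussian replacement instead of Hubbard--Stratonovich.
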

\begin{comment}
Suppose $\ms$ is an observation from the Curie Weiss model $\P^{CW}$. Then for any $N\times N$ symmetric matrix  $V_N$ such that $s_{t}\limsup_{N\rightarrow\infty}\lambda_1(V_N) \lesssim 1$ and  $\mc^{\top}:=(c_1,c_2,\ldots ,c_N)\in \R^N$ we have
\begin{equation*}
\log\left\{\mme\exp\Big[\frac{1}{2}\sum_{i,j=1}^N(\BN(i,j)+\delta V_N(i,j))\sigma_i\sigma_j+\delta\sum_i c_i\sigma_i+\delta g_N(\tw_N) \Big]\right\}\lesssim  \mathrm{Tr}^+(D_n)+\fd+\sum_{i=1}^n c_i^2
\end{equation*}
where  $\mathrm{Tr}^+(D_n):=\sum_{i=1}^N|D_N(i,i)|$, and $s_{t}:= \frac{t}{\tanh^{-1}(t)}$ if $t\ne 0$, and $s_0=1$.
\end{lemma}
\end{comment}
The second lemma gives a quantitative estimate which allows us to neglect the region where $\tw_N$ is not close to $t$.

\begin{lemma}\label{lem:gentail}
	Suppose~\eqref{eq:A1},~\eqref{eq:A2}~and~\eqref{eq:well_connect} holds, and further assume that~$\fa=o(N)$, $\sum_{i=1} (R_i-1)=o(N)$. Also, let $V_N$ be any random variable such that $V_N\leq cN$ for some fixed $c>0$, and $\varepsilon>0$ be fixed. Recalling $\BN:=A_N-\vo\vo^{\top}/N$, for any
	$(\beta,B)\in \Theta_2\cup \Theta_3$ there exists $\delta=\delta(\varepsilon,c,\beta)>0$ such that,
	\begin{align}\label{eq:gentailmain}
	\limsup_{N\rightarrow\infty}\frac{1}{N}\log\mme^{CW}\left[\exp\left(\delta  V_N+\frac{\beta}{2}\ms^{\top}\BN\ms\right)\mathbbm{1}(|\tw_N-M(\ms)|\geq\epsilon)\right]<0.
	\end{align}
\end{lemma}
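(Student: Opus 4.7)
The plan is to exploit the mixture representation of the Curie-Weiss measure from Proposition \ref{prop:CWbasic}: conditional on $W_N=w$, the spins are i.i.d.\ with mean $\mu_w := \tilde{W}_N = \tanh(\beta w+B)$, while $W_N$ itself has density proportional to $\exp(-Nf(w))$ with $f(w):=\beta w^2/2 - \log\cosh(\beta w+B)$, minimized at the fixed points $\pm t$ of \eqref{eq:mainfix}. Since $V_N \le cN$, the factor $e^{\delta V_N}\le e^{\delta cN}$ can be absorbed at the end by choosing $\delta$ small, and it suffices to prove
\[
\limsup_{N\to\infty}\frac{1}{N}\log\mathbb{E}^{CW}\!\left[e^{\frac{\beta}{2}\boldsymbol{\sigma}^\top B_N\boldsymbol{\sigma}}\mathbbm{1}\bigl(|\tilde{W}_N - M(\boldsymbol{\sigma})|\ge\varepsilon\bigr)\right] \le -\alpha
\]
for some $\alpha=\alpha(\varepsilon,\beta,B)>0$, and then pick $\delta<\alpha/c$.

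I would next split the $w$-integration into the neighborhood $G_{\eta_0}:=\{w:|\mu_w-t|<\eta_0 \text{ or } |\mu_w+t|<\eta_0\}$ and its complement, for a suitable $\eta_0\in(0,\varepsilon)$. On $G_{\eta_0}$ the event forces $\operatorname{sign}(\bar{\boldsymbol{\sigma}})\ne\operatorname{sign}(\mu_w)$ in $\Theta_2$, and conditional on $W_N=w$ Hoeffding's inequality gives $\mathbb{P}(\text{wrong sign}\mid W_N=w)\le e^{-c_1 N}$ (in $\Theta_3$ the region $G_{\eta_0}$ contributes nothing since $t=0=M(\boldsymbol{\sigma})$ and the event requires $|\mu_w|\ge\varepsilon>\eta_0$). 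On $G_{\eta_0}^c$ the indicator is automatic, while $f(w)-\min f\ge c_2(\eta_0)>0$ so Laplace's method bounds the density of $W_N$ by $O(\sqrt{N})\,e^{-N c_2}$.

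The conditional exponential moment of $e^{\frac{\beta}{2}\boldsymbol{\sigma}^\top B_N\boldsymbol{\sigma}}$ is estimated via Lemma \ref{lem:tailsubG}, after expanding around $\mu_w\mathbf{1}$:
\[
\boldsymbol{\sigma}^\top B_N\boldsymbol{\sigma} = \tilde{\boldsymbol{\sigma}}^\top B_N\tilde{\boldsymbol{\sigma}} + 2\mu_w\!\sum_i(R_i-1)\tilde{\sigma}_i + \mu_w^2\!\sum_i(R_i-1).
\]
A preliminary observation is that $\sum_i(R_i-1)^2 = o(N)$; this follows from $\sum_i R_i^2 = \|A_N\mathbf{1}\|_2^2\le\lambda_1(A_N)^2 N=N+o(N)$ together with $\sum_i(R_i-1)=o(N)$, yielding $\sum_i(R_i-1)^2 = \sum_i R_i^2 - 2\sum_i R_i + N = o(N)$. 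Combined with $\|B_N\|_F^2 = \|A_N\|_F^2 + o(1)=o(N)$ and Lemma \ref{lem:tailsubG}, the conditional MGF is $e^{o(N)}$ whenever the spectral condition $s_{\mu_w}\beta\lambda_1(B_N)<1$ holds. Well-connectedness yields $\lambda_1(B_N)\le \lambda_2(A_N)+o(1)<1$, and for $\mu_w$ near $\pm t$ one has $s_{\pm t}\beta =1$ in $\Theta_2$ (by the fixed-point equation $\beta t = \tanh^{-1}(t)$) and $s_0=\beta=1$ in $\Theta_3$, so the condition is satisfied in a neighborhood of $\pm t$. For the $G_{\eta_0}$ contribution I would then use Hölder with exponents $p\in(1,1/\lambda_1(B_N))$ and $q=p/(p-1)$, producing a bound of order $e^{-c_1 N/q+o(N)}$.

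The main obstacle is the $G_{\eta_0}^c$ contribution: the Hanson-Wright spectral condition may fail for intermediate $\mu_w$ (particularly in $\Theta_2$ where $\beta>1$ can be large). I would treat this by further decomposing $G_{\eta_0}^c$ into a bounded piece $\{|w|\le M\}$, on which the Laplace decay $e^{-Nc_2}$ dominates the crude bound $\exp(\tfrac{\beta}{2}\lambda_1(A_N)N)$ after enlarging $\eta_0$ if necessary, and a tail $\{|w|>M\}$, where the quadratic growth $f(w)\gtrsim \beta w^2/4$ provides Gaussian suppression far stronger than any exponential in $N$ coming from the MGF. Combining all pieces delivers the desired $-\alpha$ bound and the lemma follows by taking $\delta<\alpha/c$.
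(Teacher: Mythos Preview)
Your argument has a genuine gap in the handling of $G_{\eta_0}^c$. The sign-mismatch trick in step~3 requires $\eta_0<\varepsilon$, so that on $G_{\eta_0}$ the event $\{|\tilde W_N-M(\boldsymbol\sigma)|\ge\varepsilon\}$ forces $\operatorname{sign}(\bar{\boldsymbol\sigma})\ne\operatorname{sign}(\mu_w)$; hence $\eta_0$ \emph{cannot} be ``enlarged if necessary'' in step~4. With $\eta_0<\varepsilon$ the Laplace gap $c_2(\eta_0)=\inf_{w\in G_{\eta_0}^c}(f(w)-f(t))$ is at most of order $\varepsilon^2$, whereas the crude deterministic bound $\boldsymbol\sigma^\top\mathcal A_N\boldsymbol\sigma\le\lambda_1(\mathcal A_N)N$ contributes $\exp(\tfrac{\beta}{2}\lambda_1(\mathcal A_N)N)$ with $\lambda_1(\mathcal A_N)\to\rho>0$. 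For $(\beta,B)\in\Theta_2$ and small $\varepsilon$ the latter exponent swamps $c_2$, so no negative rate results. A finer three-way split (carving out an intermediate zone where Hanson--Wright still applies) does not help either: on the residual set $\{w:s_{\mu_w}\beta\lambda_1(\mathcal A_N)\ge 1\}$ one is forced back to the crude bound, and already for $\beta=2$, $\rho=0.9$ one computes that the Laplace gap on this residual set is about $0.015$, far below $\beta\rho/2=0.9$.

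The paper avoids this entirely by applying H\"older \emph{globally} at the outset, reducing (after the trivial bound $e^{\delta V_N}\le e^{\delta cN}$) to showing $\limsup_N N^{-1}\log\mathbb E^{CW}\exp\bigl(\tfrac{\beta(1+p)}{2}\boldsymbol\sigma^\top\mathcal A_N\boldsymbol\sigma\bigr)\le 0$ for some small $p>0$. Crucially, this full MGF is \emph{not} attacked via Hanson--Wright or the mixture representation; instead the paper invokes the Mean-Field approximation of \cite{Basak2017} to identify the tilted log-partition function with $\sup_{\boldsymbol\sigma\in[-1,1]^N}\{g_p(\boldsymbol\sigma)-\sum_iI(\sigma_i)\}-\log Z_N^{CW}+o(N)$, and then a variational computation in the eigenbasis of $A_N$ (using \eqref{eq:well_connect} and Lemma~\ref{lem:maxeigvec}) shows this supremum is at most $M_N(\beta,B)+o(N)$ once $p$ is small enough that $-\tfrac{\beta}{2}(1-\lambda_i)+\tfrac{\beta p}{2}\lambda_i\le 0$ for all $i\ge 2$. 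This variational route works uniformly over $[-1,1]^N$ and never needs the sub-Gaussian spectral condition that obstructs your approach in the intermediate-$\mu_w$ regime.
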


Additionally the proofs of Theorems~\ref{thm:part},~\ref{thm:conc}, and~\cref{lem:qformuni}, require~\cref{prop:CWresmain} which is stated in~\cref{sec:addresCW}.
\begin{proof}[Proof of~\cref{thm:part}]
	%We split the proof into cases depending on the value of $(\beta,B)$.
	\begin{enumerate}
		\item[(a)]
		To begin, note that
		\begin{align*}
		\frac{\beta}{2}\ms^{\top}A_N\ms+B\sum_{i=1}^N \sigma_i=\frac{\beta}{2}(\ms-t)^{\top}A_N(\ms-t)+\sum_{i=1}^N (\beta t R_i+B)\sigma_i-(\beta t^2/2)\vo^{\top}A_N\vo.
		\end{align*}
		Recall that $\mathcal{M}_N(\beta,B)=N\Big\{\frac{\beta t^2}{2}+Bt-I(t)\Big\}+\frac{\beta t^2}{2}\sum_{i=1}^N (R_i-1)$ as in~\eqref{eq:mf_2}. The above display then gives %~\eqref{eq:model} implies
		\begin{small}
			\begin{align}\label{eq:mainsimplify1}
			&\frac{Z_N(\beta,B)}{\exp(\mathcal{M}_N(\beta,B))}=
			%e^{\frac{\beta t^2}{2}(\sum_{i=1}^N (R_i-1)}}{e^{N(\beta t^2/2+Bt-I(t))}}=
			\mme^{\mmq}\exp\left(\frac{\beta}{2}\sum_{i,j=1}^N (\sigma_i-t)A_N(i,j)(\sigma_j-t)+\beta t \sum_{i=1}^N (R_i-1)(\sigma_i-t)\right)
			\end{align}
		\end{small}
		where $\mmq$ is the measure induced by $N$ independent $\pm 1$ valued random variables with mean $t$ (as defined in~\eqref{eq:realdist1}). In this case with $D_N=\beta A_N$ we have $$s_t\limsup_{N\rightarrow\infty}\lambda_1(D_N)=\beta s_t\limsup_{N\rightarrow\infty}\lambda_1(A_N)\le \beta s_t=\frac{\beta t}{\beta t+B}<1,$$
		for $(\beta,B)\in\Theta_{12}$. If $(\beta,B)\in\Theta_{11}$, then we have $t=0$, and $s_0=1$, and so with $D_N=\beta A_N$ as before, we have $s_t\limsup_{N\rightarrow\infty}\lambda_1(D_N) =\beta<1$. Thus in both cases~\cref{lem:tailsubG} is applicable with $D_N=\beta A_N, c_i=\beta t(R_i-1)$,  which using \eqref{eq:mainsimplify1} gives
		\begin{align}\label{eq:carry_d1}
		&\;\;\;\;\log{\mme^{\mmq}\exp\left(\frac{\beta}{2}\sum_{i,j=1}^N (\sigma_i-t)A_N(i,j)(\sigma_j-t)+\beta t \sum_{i=1}^N (R_i-1)(\sigma_i-t)\right)}\nonumber \\ & \lesssim \fa +t^2\sum_{i=1}^N(R_i-1)^2.
		\end{align}
		The conclusion of part (a) follows from this combined with \eqref{eq:mainsimplify1}.
		%Invoking proposition \ref{prop:CWbasic} we have that $\sigma_1,\cdots,\sigma_N$ are i.i.d. given $\tw_N$. Recall the definition of $s_\mu$ from Lemma \ref{lem:tailsubG}, and use the equation $t=\tanh(\beta t+B)$ to note that $s_t=\frac{t}{\beta t+B}$, and so $\beta s_t=\frac{\beta t}{\beta t+B}<1$. Since $\mu\mapsto s_\mu$ is continuous, we can choose $\varepsilon>0$ such that $\rho:=\sup_{\mu\in [t\pm \varepsilon]}\beta s_\mu<1$. Thus, on the set $|\tw_N-t|\le \varepsilon$ we have $\beta s_{\tw_N}\lambda_1(A_N)\le \beta \rho\lambda_1(A_N)=\beta\rho(1+o(1))$, and so invoking Lemma \ref{lem:tailsubG} with $\tilde{W}_N\in [t\pm \varepsilon]$ gives
		%Since $B=0$, using symmetry we get
		%\begin{align*}
		%\frac{Z_N(\beta,B)}{Z_N^{CW}(\beta,B)}=&2\E^{CW} \exp(\frac{\beta}{2}\ms'\BN\ms)\mathbbm{1}(\tw_N>0)\\
		%=&2\E^{CW} \exp(\frac{\beta}{2}\ms'\BN\ms)\mathbbm{1}(\tw_N>0,|\tw_N-t|>\varepsilon)+2\E^{CW} \exp(\frac{\beta}{2}\ms'\BN\ms)\mathbbm{1}(\tw_N>0,|\tw_N-t|\le \varepsilon),
		%\end{align*}
		%where the first term in the RHS above converges to $0$ by part (ii) of Lemma \ref{lem:gentail}. 
		
		\item[(b)]%{$(\beta,B)\in \Theta_2$ :} 
		Define \begin{align}\label{eq:y}
		Y_N:=(\ms-\tw_N)^{\top}\BN(\ms-\tw_N)+2\tw_N\sum_{i=1}^N (R_i-1)(\sigma_i-\tw_N)+(\tw_N^2-t^2)\sum_{i=1}^N(R_i-1),
		\end{align} and note that
		$\ms^{\top}\BN\ms=Y_N+t^2\sum_{i=1}^N (R_i-1).$
		Using this, with  $J_{N,\epsilon}:=\{ |t|-\varepsilon\le |\tw_N|\le |t|+\varepsilon\}$ for some $\epsilon>0$, by a similar calculation as in part (a) we have:
		\begin{align}\label{eq:hw_20}
		&\;\;\;\;\notag\frac{Z_N(\beta,B)}{Z_N^{CW}(\beta,B)}\nonumber \\ &=\E^{CW} \exp\left(\frac{\beta}{2}\ms^{\top}\BN\ms\right)\nonumber \\
		&=\E^{CW} \bigg[\exp\left(\frac{\beta}{2}\ms^{\top}\BN\ms\right)\mathbbm{1}(J_{N,\epsilon}^c)\bigg]+ \exp\Big(\frac{\beta}{2}t^2\sum_{i=1}^N(R_i-1)\Big)\E^{CW}(e^{\frac{\beta}{2}Y_N}\mathbbm{1}(J_{N,\epsilon}))\big].
		\end{align}
		The first term in the right hand side of~\eqref{eq:hw_20} is $o(1)$ by invoking~\cref{lem:gentail} with $\delta=0$. For the second term, by~\cref{prop:CWbasic}, the inner (conditional) expectation is taken with respect to i.i.d. $\pm 1$ valued random variables with mean $\tw_N$. In this regime $\beta s_t=\beta t/\beta t=1$. But since $\limsup_{N\rightarrow\infty}\lambda_1(\BN)<1$ by \eqref{eq:well_connect}, on the set $J_{N,\varepsilon}$ we have $$\limsup_{N\rightarrow\infty}s_{\tw_N}\lambda_1(\beta \BN)\le \limsup_{N\rightarrow\infty}\sup_{\mu\in J_{N,\varepsilon}}s_\mu \lambda_1(\beta \BN)<1$$ for $\varepsilon$ small enough. Therefore, Lemma \ref{lem:tailsubG} is applicable with $D_N=\beta \BN$ and $c_i=2\tw_N(R_i-1)$ to give	
		\begin{align*}%\label{eq:carry_1}
		\log \E^{CW}(e^{\frac{\beta}{2}Y_N}\mathbbm{1}(J_{N,\epsilon})|\tw_N)\le C\Big\{  \fa+\sum_{i=1}^N(R_i-1)^2\Big\}+\frac{\beta }{2}\Big|(\tw_N^2-t^2)\sum_{i=1}^N(R_i-1)\Big|
		\end{align*}
		for some $C<\infty$, which on taking another expectation gives
		\begin{small}
		\begin{align}\label{eq:carry_2}
		\notag\log \E^{CW}(e^{\frac{\beta}{2}Y_N}\mathbbm{1}(J_{N,\epsilon}))\le &C\Big\{  \fa+\sum_{i=1}^N(R_i-1)^2\Big\}+\log \E e^{\frac{\beta }{2}\Big|(\tw_N^2-t^2)\sum_{i=1}^N(R_i-1)\Big|}\\
		\lesssim &\fa+\sum_{i=1}^N(R_i-1)^2+\frac{1}{N}\Big[\sum_{i=1}^N(R_i-1)\Big]^2,
		\end{align}
		\end{small}
		where the last step uses part (b) of~\cref{prop:CWresmain}. This along with \eqref{eq:hw_20} gives
		%\begin{align*}
		%\notag&\log \E e^{\frac{\beta}{2}\sigma'\BN\sigma}-\frac{\beta t^2}{2}\sum_{i=1}^N(R_i-1)\lesssim 
		%\tw_N^2\sum_{i=1}^N(R_i-1)\E^{CW}(e^{\frac{\beta}{2}Y_N}|\tw_N)\mathbbm{1}(J_{N,\epsilon})\big].
		%\notag	\le &C\{\fa+\sum_{i=1}^N(R_i-1)^2\}+\log \E e^{\frac{\beta}{2}(\tw_N^2-t^2)\sum_{i=1}^N(R_i-1)}\\
		%	\lesssim &\fa+\sum_{i=1}^N(R_i-1)^2+\frac{1}{N}\Big[\sum_{i=1}^N(R_i-1)\Big]^2,
		%\end{align*}
		%where the last line uses~\cref{prop:CWresmain}. Using \eqref{eq:hw_20}, this gives
		%On taking 
		\begin{align*}
		\log{Z_N(\beta,B)}-\log{Z_N^{CW}(\beta,B)} -\frac{\beta t^2}{2}\sum_{i=1}^N(R_i-1)\lesssim& \fa+\sum_{i=1}^N(R_i-1)^2,
		\end{align*}
		from which the desired conclusion follows by another application of part (a) of \cref{prop:CWresmain} to note that $\log Z_N^{CW}(\beta,B)- N\left[ \frac{\beta t}{2}+Bt-I(t)\right]\lesssim1$.
		
		%where the first term in the RHS above goes to $0$ exponentially fast, on invoking part (i) of Lemma \ref{lem:gentail}. For dealing with the second term, note that
		%We now split the proof into cases depending on the value of $(\beta,B)$.
		%Now, $\limsup_{N\rightarrow\infty}\lambda_1(
		%\begin{align}\label{eq:hw_20}
		%\exp\left(\frac{\beta}{2}\ms'A_n\ms+B\sum_{i=1}^N\sigma_i\right)$
		%\E^{CW}\exp\left(\frac{\beta}{2}\ms'\BN\ms\right)\mathbbm{1}(|\tw_N-t|\le \epsilon)=&\E^{CW}\Big[\E^{CW}(e^{\frac{\beta}{2}Y_N}|\tw_N)\mathbbm{1}(|\tw_N-t|\le \epsilon)e^{\frac{\beta}{2}\tw_N^2\sum_{i=1}^N(R_i-1)}\Big]
		%(\ms-\tw_N)'\BN(\ms-\tw_N)+\beta \tw_N\sum_{i=1}^N(R_i-1)(\sigma_i-\tw_N)+\frac{\beta\tw_N^2}{2}\sum_{i=1}^N(R_i-1)\right)
		%\end{align}
		%We further split the proof into sub-cases based on whether $(\beta,B)\in \Theta_{11}$ or $\Theta_{12}$.
		\item[(c)]%{$(\beta,B)\in \Theta_3$}
		In this regime we have $t=0$, and so $s_t=s_0=1$, and $\beta s_0=1$. As in the proof of part (b), the first term in the RHS of \eqref{eq:hw_20} is $o(1)$ invoking~\cref{lem:gentail} with $\delta=0$. For handling the second term,
		invoking \eqref{eq:well_connect} gives $$\limsup_{N\rightarrow\infty}s_{\tw_N}\lambda_1( \BN)\le \limsup_{N\rightarrow\infty}\sup_{\mu\in J_{N,\varepsilon}}s_\mu \lambda_1( \BN)<1$$ for $\varepsilon$ small enough. Also 
		Lemma \ref{lem:tailsubG} with $D_N=\BN$ and $c_i=2\tw_N(R_i-1)$  gives
		\begin{align*}
		\log \E^{CW}(e^{\frac{\beta}{2}Y_N}\mathbbm{1}(J_{N,\epsilon})|\tw_N)\le C\Big\{\fa+\tw_N^2\sum_{i=1}^N(R_i-1)^2\Big\}+\frac{\beta}{2}\tw_N^2\sum_{i=1}^N(R_i-1)
		\end{align*}
		for some $C<\infty $ free of $N$. This, on taking another expectation
		along with \eqref{eq:hw_20} gives
		\begin{small}
		\begin{align}\label{eq:carry_3}
		\notag\log \E^{CW}(e^{\frac{\beta}{2}Y_N}\mathbbm{1}(J_{N,\epsilon}))\le &C\fa+\log \E \exp\left(C\tw_N^2\sum_{i=1}^N(R_i-1)^2+\frac{\beta}{2}\tw_N^2\sum_{i=1}^N(R_i-1)\right)\\
		\lesssim &\fa+\frac{1}{N}\Big[\sum_{i=1}^N(R_i-1)+\sum_{i=1}^N(R_i-1)^2\Big]^2,
		\end{align}
		\end{small}
		%\begin{align*}
		% \log{Z_N(\beta,B)}-\log{Z_N^{CW}(\beta,B)}
		% \le& C\fa+\log\E \exp\left(\frac{\beta}{2}\tw_N^2\sum_{i=1}^N(R_i-1)+C\tw_N^2\sum_{i=1}^N(R_i-1)^2\right)\\
		% \lesssim&\fa+\frac{1}{N}\Big[\sum_{i=1}^N(R_i-1)+\sum_{i=1}^N(R_i-1)^2\Big]^2
		%\end{align*}
		where the last bound uses part (b) of~\cref{prop:CWresmain}.
		Combining \eqref{eq:hw_20} and \eqref{eq:carry_3} gives
		\[\log Z_N(\beta,B)-\log Z_N^{CW}(\beta,B)\lesssim \fa+\frac{1}{N}\Big[\sum_{i=1}^N(R_i-1)]^2+\frac{1}{N}[\sum_{i=1}^N(R_i-1)^2\Big]^2.\]
		%A similar proof as in $(\beta,B)\in \Theta_{2}$ then completes the proof. 
		We incur an additional log factor in the final answer because $\log Z_N^{CW}(\beta,B)- N\left[ \frac{\beta t}{2}+Bt-I(t)\right]\lesssim \log N$ by part (a) of~\cref{prop:CWresmain}.
		
	\end{enumerate}
\end{proof}
\begin{proof}[Proof of Theorem \ref{thm:conc}]
	\begin{enumerate}
		\item[(a)]
		Using a similar calculation as in~\eqref{eq:hw_20}, we get:
		\begin{align}\label{eq:hw_21}
		&\;\;\;\;(c(N))^{-1}\mmp(\ms\in \mE_N)\nonumber \\&=\mme^{\mmq}\left[\exp\left(\frac{\beta}{2}\sum_{i,j=1}^N(\sigma_i-\ut)A_N(\sigma_j-\ut)+\beta\ut\sum_{i=1}^N (R_i-1)(\sigma_i-\ut)\right)\mathbbm{1}(\ms\in \mE_N)\right]
		\end{align}
		where the deterministic sequence $c(N)$ satisfies
		\begin{align*}
		c(N)=\frac{\exp(\beta \ut^2(\vo^{\top}A_N\vo-N))\left(\exp(\beta \ut+B)+\exp(-\beta \ut-B)\right)^N}{Z_N(\beta,B)\exp\left((\beta \ut^2/2)\vo^{\top}A_N\vo\right)} \leq 1,
		\end{align*}
		on invoking the Mean-Field lower bound \eqref{eq:mf_2}. %The last inequality follows by using the Mean-Field approximation (see~\cite[Theorem 1.6]{ChaDembo2016}) to note that
		%\begin{align}\label{eq:mf_2}
		%\log Z_N(\beta,B)\ge N\left[\frac{\beta t^2}{2}+Bt-I(t)\right]+\frac{\beta t^2}{2}\sum_{i=1}^N (R_i-1).
		%\end{align}
		\noindent Next, by using H\"older's inequality with exponent $p$ (to be chosen later), the left hand side of~\eqref{eq:hw_21} can be bounded above by,
		\begin{small}
		\begin{align}\label{eq:holder}
		\left\{\mme^{\mmq}\exp\left(\frac{\beta(1+p)}{2}\sum_{i,j=1}^N(\sigma_i-\ut)A_N(\sigma_j-\ut)+\beta\ut(1+p)\sum_{i=1}^N (R_i-1)(\sigma_i-\ut)\right)\right\}^{\frac{1}{1+p}} (\mmq(\mE_N))^{\frac{p}{1+p}}.
		\end{align}
		\end{small}
		Using arguments similar to the derivation of \eqref{eq:carry_d1} shows that for $p$ small enough we have
		%We once again have $s_t\limsup\limits\limits_{N\to\infty}\lambda_1(\beta(1+p)A_N)<1$ for a small enough $p>0$ and consequently, an application of~\cref{lem:tailsubG} with $D_N=\beta(1+p)A_N$ and $c_i=\beta(1+p)(R_i-1)$ gives
		\begin{align*}%\label{eq:holder_2}
		&\;\;\;\log \mme^{\mmq}\left[\exp\left(\frac{\beta(1+p)}{2}\sum_{i,j}(\sigma_i-\ut)A_N(\sigma_j-\ut)+\beta\ut(1+p)\sum_{i=1}^N (R_i-1)(\sigma_i-\ut)\right)\right]\\ &\lesssim \fa+t^2\sum_{i=1}^N(R_i-1)^2.
		\end{align*}
		Combining this along with \eqref{eq:hw_21} and \eqref{eq:holder} gives the desired conclusion.
		\item[(b)]%{$(\beta,B)\in \Theta_2$:}
		With $Y_N$ as in \eqref{eq:y}, using a similar calculation as in the derivation of \eqref{eq:hw_20} we can bound $P(\ms\in \mE_N)$ by
		\begin{align}\label{eq:part_2}
		\notag&\frac{Z_N^{CW}(\beta,B)}{Z_N(\beta,B)}\E^{CW}e^{\frac{\beta}{2}{\ms^\top\BN\ms}}\mathbbm{1}(\ms\in \mE_N)\\
		\le &\frac{Z_N^{CW}(\beta,B)}{Z_N(\beta,B)}\left[\E^{CW}e^{\frac{\beta}{2}{\ms^\top\BN\ms}}\mathbbm{1}(\ms\in J_{N,\varepsilon}^c)+e^{\frac{\beta t^2}{2}\sum_{i=1}^N(R_i-1)}\E^{CW}e^{\frac{\beta}{2}Y_N}\mathbbm{1}(\ms\in \mE_N)\mathbbm{1}( J_{N,\varepsilon})\right].
		\end{align}
		For controlling the ratio of partition functions in the RHS of \eqref{eq:part_2}, use the Mean-Field approximation \eqref{eq:mf_2} to get a lower bound for $\log{Z_N(\beta,B)}$, whereas part (a) of Proposition \ref{prop:CWresmain} gives
		%\label{eq:mf_3}
		$\log Z_N^{CW}(\beta,B)- N\left[ \frac{\beta t}{2}+Bt-I(t)\right]\lesssim 1.$
		Combining these two observations, we get:
		\begin{align}\label{eq:mf_4}
		\log Z_N^{CW}(\beta,B)-\log Z_N(\beta,B)+\frac{\beta t^2}{2}\sum_{i=1}^N(R_i-1)\lesssim 1.
		\end{align}
		Also, the first term inside the parenthesis in the RHS of \eqref{eq:part_2} is exponentially small in $N$, by invoking Lemma \ref{lem:gentail} with $\delta=0$. Proceeding to control the second term in the RHS of \eqref{eq:part_2} we have 
		\begin{align}\label{eq:part_3}
		\E^{CW}e^{\frac{\beta}{2}Y_N}\mathbbm{1}(\ms\in \mE_N)\mathbbm{1}( J_{N,\varepsilon})\le &\Big[\E^{CW}e^{\frac{\beta(1+p)}{2}Y_N}\mathbbm{1}( J_{N,\varepsilon})\Big]^{\frac{1}{1+p}}\left[\P^{CW}(\ms\in \mE_N)\right]^{\frac{p}{1+p}},
		\end{align}
		where the last step uses Holder's inequality for any $p>0$. For controlling the first term inside the bracket in the RHS of \eqref{eq:part_3}, by choosing $p>0$ small enough and repeating the same argument as in the derivation of \eqref{eq:carry_2}, we get:
		%As before, we choose $\varepsilon>0$ small enough such that $\beta\sup_{\mu\in [t\pm \varepsilon]}s_\mu<1$, and now we choose $p>0$ small enough such that $\beta (1+p)\sup_{\mu\in [t\pm \varepsilon]}s_\mu<1$ as well. After this, repeating the proof of the previous theorem we get the bound
		\begin{align}\label{eq:mf_5}
		\log \E^{CW}(e^{\frac{\beta(1+p)}{2}Y_N}) \mathbbm{1}(J_{N,\varepsilon})\lesssim \fa+\sum_{i=1}^N(R_i-1)^2.%\frac{\beta(1+p)}{2}\Big|(\tw_N^2-t^2)\sum_{i=1}^N(R_i-1)\big|%+\Big|\sum_{i=1}^N(R_i-1)\Big|.
		\end{align}
		%for some $C_p<\infty$ free of $N$.
		%where the first term is exponentially small by part (i) of Lemma \ref{lem:gentail}, and using Proposition \ref{prop:CWresmain}	
		%Noting that the first term in the RHS of \eqref{eq:part_2} is exponentially small using Lemma \ref{lem:gentail}, it suffices to control the second term. For this, fixing $p>0$ we use Holder's inequality to get 
		%	\begin{align}\label{eq:holder}
		%	\E^{CW}e^{\frac{\beta}{2}\ms'\BN\ms}\mathbbm{1}(\ms\in \mE_N,|\tw_N-t|\le \epsilon)
		%	\le &\left[\E^{CW}e^{\frac{\beta(1+p)}{2}\ms'\BN\ms}\mathbbm{1}(|\tw_N-t|\le \epsilon)\right]^{\frac{1}{1+p}}\left[\P^{CW}(\ms\in \mE_N)\right]^{\frac{p}{1+p}}.
		%\end{align}
		Combining \eqref{eq:part_2}, \eqref{eq:mf_4}, \eqref{eq:part_3} and \eqref{eq:mf_5}, the desired conclusion follows.
		%\begin{align*}\log \mmp(\ms\in \mE_N)\lesssim \fa+\sum_{i=1}^N(R_i-1)^2+\log \mmp^{CW}(\ms\in \mE_N)+\log \E e^{\frac{\beta(1+p)}{2}|(\tw_N^2-t^2)\sum_{i=1}^N(R_i-1)|}\\
		%\le 
		\item[(c)]%{$(\beta,B)\in \Theta_3$:}
		%Proceeding as in the proof of part (b) above, we
		All steps of part (b) above go through verbatim, except the RHS of \eqref{eq:mf_4} gets replaced by $\log N$ (by part (a) of~\cref{prop:CWresmain}), and \eqref{eq:mf_5} is replaced by (c.f.~\eqref{eq:carry_3}) %As in part (b) above, the main step is to estimate the second term in the RHS of \eqref{eq:part_2}. To this effect, repeating the same argument as in the derivation of \eqref{eq:carry_3} with $p>0$ small enough yields: 
		%As before, we choose $\varepsilon>0$ small enough such that $\beta\sup_{\mu\in [t\pm \varepsilon]}s_\mu<1$, and now we choose $p>0$ small enough such that $\beta (1+p)\sup_{\mu\in [t\pm \varepsilon]}s_\mu<1$ as well. After this, repeating the proof of the previous theorem we get the bound
		\begin{align}\label{eq:mf_5n}
		\log \E^{CW}e^{\frac{\beta(1+p)}{2}Y_N}\mathbbm{1}(J_{N,\varepsilon})\lesssim \fa+\frac{1}{N}\Big[\sum_{i=1}^N(R_i-1)^2\Big]^2+\frac{1}{N}\Big[\sum_{i=1}^N(R_i-1)\Big]^2.
		\end{align}
		%Also, in this case using the Mean-Field approximation \eqref{eq:mf_2} along with  Proposition \ref{prop:CWresmain} gives
		%\begin{align*}%\label{eq:mf_4n}
		%\log Z_N^{CW}(\beta,B)-\log Z_N(\beta,B)\lesssim \log N.
		%\end{align*}
		Combining this with \eqref{eq:part_2} and \eqref{eq:mf_5n} gives the desired conclusion.
	\end{enumerate}
\end{proof}
\begin{proof}[Proof of~\cref{lem:qformuni}]
	\begin{enumerate}
		\item[(a)]%{$(\beta,B)\in\Theta_1$ :}
		Invoking Theorem \ref{thm:conc} and changing $\delta$ if necessary, it suffices to show the desired conclusion under $\mmq$, where $\mmq$ is the i.i.d. measure induced by $N$ $\pm 1$ valued random variables with mean $t$, as defined in~\eqref{eq:realdist1}. A direct calculation shows that $m_i(\ms)-t$ equals $\sum_{j=1}^NA_N(i,j)(\sigma_j-t)+t(R_i-1)$, and so
		\begin{align}
		\notag\sum_{i=1}^N\Big(m_i(\ms)-t\Big)^2 \le &2\sum_{i=1}^N\Big[\sum_{j=1}^N A_N(i,j)(\sigma_j-t)\Big]^2+2t^2\sum_{i=1}^N(R_i-1)^2\\
		=&2\sum_{i=1}^N\sum_{j=1}^N (A_N^2)(i,j)(\sigma_i-t)(\sigma_j-t)+2t^2\sum_{i=1}^N(R_i-1)^2\label{eq:m_21}.
		\end{align}
		It therefore suffices to control the exponential moment of the first term in the RHS of \eqref{eq:m_21}. %Since Proposition \ref{prop:CWresmain} gives the existence of $\delta>0$ such that $\E^{CW} e^{N\delta (\tw_N-t)^2}<\infty$, it suffices to the existence of $\delta>0$ such that
		Since $\limsup_{N\rightarrow\infty}\lambda_1(A_N^2)\le 1$, for any $\delta\in (0,1/2)$, using Lemma \ref{lem:tailsubG} with $D_N=\delta A_N^2$ and $c_i=0$ we have
		\begin{align*}
		\log \E^{\mmq}\exp\left(\delta (\ms-t)^\top A_N^2(\ms-t)\right)\lesssim {\lVert A_N^2\rVert_F^2}=\sum_{i=1}^N\lambda_i^4\lesssim \sum_{i=1}^N\lambda_i^2=\fa.
		\end{align*}
		This gives the desired conclusion.
		
		\item[(c)]%{$(\beta,B)\in \Theta_2$ :}
		By invoking Theorem \ref{thm:conc}, it suffices to show the desired conclusion under the Curie-Weiss model. Start by noting that $\bar{\boldsymbol{m}}(\ms)=\frac{1}{N}\sum_{i=1}^NR_i\sigma_i$, and so
		\begin{align*}%\label{eq:april_13}
		m_i(\ms)-\bar{\boldsymbol{m}}(\ms)=\sum_{j=1}^NA_N(i,j)(\sigma_j-\tw_N)+\frac{1}{N}\sum_{i=1}^NR_i(\sigma_i-\tw_N)
		+\tw_N(R_i-\bar{R}).
		\end{align*}
		This shows that $\sum_{i=1}^N\Big(m_i(\ms)-\bar{\boldsymbol{m}}(\ms)\Big)^2$ is bounded by 
		\begin{align}\label{eq:april_13.1}
		\notag&3 \sum_{i=1}^N\Big[\sum_{j=1}^NA_N(i,j)(\sigma_j-\tw_N)\Big]^2+\frac{3}{N}\Big[\sum_{i=1}^NR_i(\sigma_i-\tw_N)\Big]^2+3\tw_N^2\sum_{i=1}^N(R_i-\bar{R})^2\\
		\le &3 \sum_{i,j=1}^N\left((A_N^2)(i,j)+\frac{3}{N}R_iR_j\right)(\sigma_i-\tw_N)(\sigma_j-\tw_N)+3\tw_N^2\sum_{i=1}^N(R_i-1)^2.
		\end{align}
		Conditioning on $\tw_N$, we now control the exponential moment of the first term in the RHS of the above display under the Curie-Weiss model. By Proposition \ref{prop:CWbasic}, under the Curie Weiss model, given $\tw_N$, the random vector $(\sigma_1,\cdots,\sigma_N)$ are i.i.d. with mean $\tw_N$. Note that $$\limsup_{N\rightarrow\infty}\lambda_1\Big( A_N^2+\frac{3}{N}{\bf R}{\bf R}^\top\Big)\le \limsup_{N\rightarrow\infty}\lambda_1(A_N^2)+\limsup_{N\rightarrow\infty}\frac{3}{N}\lambda_1({\bf R}{\bf R}^\top)\lesssim 1,$$ 
		$${\lVert \frac{1}{N}{\bf R}{\bf R}^\top\rVert_F^2}=\frac{1}{N^2}(\sum_{i=1}^NR_i^2)^2\lesssim 1$$ where the last display follows from the assumption that	$\max_{1\leq i\leq N} R_i\lesssim 1$ by~\eqref{eq:A2}. Based on these observations, on invoking Lemma \ref{lem:tailsubG} with $c_i=0$, $D_N=\delta \Big(A_N^2+\frac{3}{N}{\bf R}{\bf R}^\top\Big)$ for $\delta$ small enough, we get
		\begin{align*}%\label{eq:covid}
		\log \E^{CW} e^{\delta \sum_{i=1}^N(m_i(\ms)-\bar{\boldsymbol{m}}(\ms))^2}-\log \E^{CW}e^{3\delta\tw_N^2\sum_{i=1}^N(R_i-1)^2} \lesssim\lVert A_N^2\rVert_F^2+\text{tr}(A_N^2) \lesssim \fa,%\frac{1}{N}\Big[\sum_{i=1}^N(R_i-1)^2\Big]^2,
		\end{align*}
		from which the desired conclusion follows on noting that \begin{align*}
		\log \E^{CW}e^{3\delta\tw_N^2\sum_{i=1}^N(R_i-1)^2}\lesssim \frac{1}{N}\big[\sum_{i=1}^N(R_i-1)^2\big]^2,
		\end{align*} which follows from part (b) of Proposition \ref{prop:CWresmain}.
		%The conclusion for $(\beta,B)\in\Theta_{3}$ is immediate from \eqref{eq:covid}.
		% For $(\beta,B)\in\Theta_2$, note that by similar calculations as before,
		\item[(b)]
		To begin, note that
		\begin{align}\label{eq:holder_b}
		\sum_{i=1}^N (m_i-M(\ms))^2&\lesssim \sum_{i=1}^N(m_i(\ms)-\bar{\boldsymbol{m}}(\ms))^2+\frac{1}{N}\Big[\sum_{i=1}^NR_i(\sigma_i-\tw_N)\Big]^2\nonumber \\ &\qquad +(\tw_N-M(\ms))^2\bigg|\sum_{i=1}^N (R_i-1)\bigg|.
		\end{align}
		By H\"older's inequality, it suffices to bound the exponential moments of the three terms of the above display at some $\delta>0$. Exponential moment of the third term in the RHS of \eqref{eq:holder_b} is  bounded by part (b) of~\cref{prop:CWresmain}, as $\sum_{i=1}^N|R_i-1|=o(N)$. Proceeding to bound the sum of the first two terms, use  \eqref{eq:april_13.1} to get
		\begin{align*}
		&\;\;\;\sum_{i=1}^N(m_i(\ms)-\bar{\boldsymbol{m}}(\ms))^2+\frac{1}{N}\Big[\sum_{i=1}^NR_i(\sigma_i-\tw_N)\Big]^2\\ &\le  \sum_{i,j=1}^N\left((A_N^2)(i,j)+\frac{4}{N}R_iR_j\right)(\sigma_i-\tw_N)(\sigma_j-\tw_N)+3\sum_{i=1}^N(R_i-1)^2,\end{align*}
		and so it suffices to bound $$\log \E^{CW}\exp\left(\delta\sum_{i,j=1}^N\left((A_N^2)(i,j)+\frac{4}{N}R_iR_j\right)(\sigma_i-\tw_N)(\sigma_j-\tw_N)\right)$$
		for $\delta$ small enough. But this follows on invoking~\cref{lem:tailsubG} with $D_N=\delta(A_N^2+\frac{4}{N}{\bf R}{\bf R}^\top)$ and $c_i=0$ to get
		\begin{align*}&\;\;\;\;\log \E^{CW}\exp\left(\delta\sum_{i,j=1}^N\left((A_N^2)(i,j)+\frac{4}{N}R_iR_j\right)(\sigma_i-\tw_N)(\sigma_j-\tw_N)\right)\\ &\lesssim \lVert A_N^2\rVert_F^2+\text{tr}(A_N^2)\lesssim \fa,\end{align*}
		which completes the proof of part (b).
		%The second term can be bounded similarly as before by combining~\cref{lem:tailsubG} with~\cref{prop:CWbasic},
		%as $\mme^{CW}[\exp(\delta N(\tw_N-M(\ms))^2)]\lesssim 1$ for small enough positive $, by~\cref{prop:CWresmain}.

	\end{enumerate}
\end{proof}
\section{Proof of Lemmas \ref{lem:unifbd} and \ref{lem:linfbdgen}}\label{sec:prooflem2}
\begin{proof}[Proof of~\cref{lem:unifbd}]
	\begin{enumerate}
		\item[(a)]
		To begin, note that  it suffices to prove the bound for $\lambda$ large enough. To this effect, using part (b) of Lemma \ref{lem:auxtail} we have the existence of a constant $M$ free of $N$, such that for all $\lambda>0$ we have
		\begin{align*}
		\P\bigg(|m_i(\ms)-\sum_{j=1}^NA_N(i,j)\tanh(\beta m_j(\ms))|>\lambda\sqrt{\log N\sum_{j=1}^NA_N(i,j)^2}\bigg)\le 2 e^{-\frac{\lambda^2\log N}{M}},
		\end{align*}
		which on using a union bound with $\alpha_N=\max_{1\le i\le N}\sum_{j=1}^NA_N(i,j)^2$ (as in~\cref{theo:uniq1}) gives
		\begin{align*}
		\P\bigg(\max_{1\le i\le N}|m_i(\ms)-\sum_{j=1}^NA_N(i,j)\tanh(\beta m_j(\ms))|>\lambda \sqrt{\alpha_N\log N}\bigg)\le 2N e^{-\frac{\lambda^2\log N}{M}}.
		\end{align*}
		On the set $\Big\{\max_{1\le i\le N}|m_i(\ms)-\sum_{j=1}^NA_N(i,j)\tanh(\beta m_j(\ms))|\le \lambda\sqrt{ \alpha_N\log N}\Big\}$ using the bound $|\tanh(x)|\le |x|$  we have
		\begin{align*}
		\max_{1\le i\le N}|m_i(\ms)|\le  \sqrt{\alpha_N\log N }+\beta \max_{1\le i\le N}R_i  \max_{1\le i\le N}|m_i(\ms)|,
		\end{align*}
		which on using the fact that $\max_{1\leq i\leq N} R_i\to 1$ (see~\eqref{eq:A3}) gives $
		\max_{1\le i\le N}|m_i(\ms)|\lesssim \sqrt{\alpha_N\log N}$.
		Thus there exists a constant $c'$ such that
		\begin{align*}
		\P(\max_{1\le i\le N}|m_i(\ms)|>c'\lambda \sqrt{\alpha_N\log N})\le 2Ne ^{-\frac{\lambda^2\log N}{M}},
		\end{align*}
		from which the desired conclusion follows for all $\lambda$ large enough.
		
		\item[(b)]
		More generally, we will show that for any vector ${\bf c}\in \R^N$ we have
		\begin{align}\label{eq:claim_20}
		\E \left(\sum_{i=1}^Nc_i\sigma_i\right)^2\lesssim (\log N)^{3/2} \sum_{i=1}^Nc_i^2.
		\end{align}
		To this effect, for every non-negative integer $\ell$ set
		%The proof proceeds along similar lines as~\cref{lem:contrcrit} but in this case, we don't require any assumption on $\sum_{i=1}^N c_i$. Set 
		$\mathbf{c}^{(\ell)}:= \beta^{\ell}A_N^\ell{\mathbf c}$, and $x_\ell:= \mathbb{E}[(\sum_{i} c^{(\ell)}_i\sigma_i)^2]$, and note that ${\mathbf c}^{(0)}={\mathbf c}$, and the LHS of \eqref{eq:claim_20} is just $x_0$.
		%Also, let $\beta^*:= (1+\beta)/2$. Note that for all large enough $N$, we have $\max\{\lambda_1(A_N),-\lambda_N(A_N)\}\leq \beta^*$. Throughout this proof we will use $D$ to denote absolute constants which may change from one line to another. 
		Now, for any $\ell\ge 0$ we can write 
		\begin{align}\label{eq:gather}
		x_\ell=T_{1,\ell}+T_{2,\ell}+T_{3,\ell},
		\end{align}
		where
		\begin{gather*}%\label{eq:gather_1}
		T_{1,\ell}:= \mathbb{E}\left[\left(\sum_{i=1}^N c^{(\ell)}_i(\sigma_i-\tanh(\beta m_i(\ms)))\right)^2\right]\;\; ,\;\; T_{2,\ell}:= \mathbb{E}\left[\left(\sum_{i=1}^N c^{(\ell)}_i\tanh(\beta m_i(\ms))\right)^2\right]\\ T_{3,\ell}\:= 2\mathbb{E}\left[\left(\sum_{i\ne j}c^{(\ell)}_i c^{(\ell)}_j (\sigma_i-\tanh(\beta m_i(\ms)))\tanh(\beta m_j(\ms))\right)\right].
		\end{gather*}
		For controlling $T_{3,\ell}$, setting $m_i^j(\ms):=\sum_{k=1,k\neq j}^N A_N(i,k)\sigma_k\sigma_j$ we have
		\begin{align}\label{eq:gather_1}
		%&2\Bigg|\mathbb{E}\Big[\Big(\sum_{i,j}c^{(k)}_i c^{(k)}_j (\sigma_i-\tanh(\beta m_i(\ms)))\tanh(\beta m_j(\ms))\Big)\Big]\Bigg|\\ 
		\notag |T_{3,\ell}|=&  2\Bigg|\sum_{i\ne j}^Nc^{(\ell)}_i c^{(\ell)}_j \mathbb{E}\Big[(\sigma_i-\tanh(\beta m_i(\ms)))(\tanh(\beta m_j(\ms))-\tanh(\beta m_j^i(\ms)))\Big]\Bigg|\\ {\lesssim} &  \sum_{i\ne j}^N\big|c^{(\ell)}_i\big|\big|c^{(\ell)}_j\big|A_N(i,j)\lesssim \lVert \mc^{(\ell)}\rVert_2^2 
		\end{align}
		where, in the first line, we use $\mme[\sigma_i-\tanh(\beta m_i(\ms))|(\sigma_j,j\neq i)]=0$ and consequently $\mathbb{E}\Big[(\sigma_i-\tanh(\beta m_i(\ms)))\tanh(\beta m_j^i(\ms))\Big]=0$ for $i\neq j$. The bound $|\tanh(\beta m_i(\ms))-\tanh(\beta m_i^j(\ms))|\lesssim A_N(i,j)$ is used in the second line.
		
		Proceeding to bound $T_{2,\ell}$, use %For $B=0$ and $\beta<1$, note that there exists $\xi_i$'s (uniformly bounded random variables) 
		a Taylor's series expansion to get  $\tanh(\beta m_i(\ms))=\beta m_i(\ms)+\xi_i m_i(\ms)^3$ for random variables $\{\xi_i\}_{1\le i\le N}$ uniformly bounded by $1$ in absolute value. Also note that
		$$x_{\ell+1}=\E\left[\left(\left(\mc^{(\ell+1)}\right)^{\top}\ms\right)^2\right]=\E\left[\left(\beta \left(\mc^{(\ell)}\right)^{\top}A_N\ms\right)^2\right]=\E\left[\left(\beta\sum_{i=1}^N c_i^{\ell}m_i(\ms)\right)^2\right].$$ %Moreover, for all large enough $N$, $\mathbb{E}[\sum_i m_i(\ms)^6]\lesssim N^{-2/3}(\log{N})^2\fa\lesssim (\log{N})^{3/2}$ and $\lambda_1(H_N)\leq \beta^*$. As a result,
		Consequently, 
		\begin{align}\label{eq:rate1}
		\notag T_{2,\ell} -x_{\ell+1}&=\mathbb{E}\left[\left(\sum_{i=1}^N c^{(\ell)}_i\left\{m_i(\ms)\beta +\xi_i m_i(\ms)^3\right\}\right)^2\right]-\mathbb{E}\left[\left(\beta\sum_{i=1}^N c^{(\ell)}_im_i(\ms)\right)^2\right]\\ 
		&\le 2\sqrt{ x_{\ell+1}}\lVert \mc^{(\ell)}\rVert_2\sqrt{\mathbb{E}\left[\sum_i m_i(\ms)^6\right]}+\lVert \mc^{(\ell)}\rVert_2^2\mathbb{E}\left[\sum_i m_i(\ms)^6\right]. 
		\end{align}
		Finally, using Cauchy-Schwarz inequality gives
		\begin{align}\label{eq:rate_0}
		\E\Big(\sum_{i=1}^Nm_i(\ms)^6\Big)\le \sqrt{\E(\sum_{i=1}^Nm_i^2)^2}\sqrt{\E \max_{1\le i\le N}|m_i(\ms)|^8}\le C^2 \fa \alpha_N^2(\log N)^2
		\end{align}
		for some $C$ free of $N$, where the last inequality uses part (a) of this lemma and part (b) of Lemma \ref{lem:qformuni}.
		%&\le 2\sqrt{x_{k+1}}\lVert \mc \rVert_2\beta^k\lVert A_N\rVert_2\varepsilon_N+\lVert \mc \rVert_2^2\beta_N^{2k}\lVert A_N\rVert_2^2\varepsilon_N^2	\end{align}
		%with $\beta_N:=\beta \max_{1\le i\le N}R_i$, where the last line uses \eqref{eq:rate_0}. 
		Noting that  $T_{1,\ell}\lesssim \lVert {\mathbf c}^{(\ell)}\rVert_2^2$ by part (b) of~\cref{lem:auxtail}, combining \eqref{eq:gather_1}, \eqref{eq:rate1} and \eqref{eq:rate_0}  along with \eqref{eq:gather} gives the existence of a constant $D$ free of $N,\ell$ such that
		\begin{align}\label{eq:recursion}
		x_\ell\le x_{\ell+1}+2\sqrt{x_{\ell+1}}\lVert \mc \rVert_2\beta_N^\ell\delta_N+\lVert \mc \rVert_2^2\beta_N^{2\ell}\delta_N^2+D\beta_N^{2\ell}\lVert \mc \rVert_2^2,
		\end{align}
		where we have also used the bound $\Vert \mc^{(\ell)} \rVert_2\le \beta_N^\ell \Vert \mc \rVert_2$ with $\beta_N:=\beta \lVert A_N\rVert_2$, and we set $\delta_N:=\max(1,C \lVert A_N\rVert\alpha_N\log N)$. Since $\beta_N\rightarrow \beta<1$, for all $N$ large we have $\beta_N\le \beta_0$ for some $\beta_0<1$. Given constants $\beta_0\in (0,1),D>0$, there exists $M$ large enough such that $M>(\beta_0\sqrt{M}+1)^2+D$. With this $M,\beta_0$ we claim that for all $\ell$, we have 
		\begin{align}\label{eq:recursion_2}
		x_\ell\le M \lVert \mc \rVert_2^2 \beta_0^{2\ell}\delta_N^2,
		\end{align}
		from which \eqref{eq:claim_20} is immediate on setting $\ell=0$. For proving \eqref{eq:recursion_2} we use backwards induction on $\ell$. Using Cauchy-Schwarz inequality gives
		\begin{align*}
		x_\ell\le N\lVert \mc^{(\ell)} \rVert_2^2\le N \beta_N^\ell \lVert \mc \rVert_2^2,
		\end{align*}
		and so \eqref{eq:recursion_2} holds for all $\ell$ large enough, as $\beta_N<\beta_0$. Assume that the result holds for $x_{\ell+1}$ for some $\ell$, i.e. $x_{\ell+1}\le M \lVert \mc \rVert_2^2 \beta_0^{2\ell+2}\delta_N^2$. Using \eqref{eq:recursion} gives
		\begin{align*}
		x_\ell\le  \lVert \mc \rVert_2^2 \beta_0^{2\ell}\delta_N^2\Big(M\beta_0^2+2\sqrt{M}\beta_0+1+D\Big)\le M \lVert \mc \rVert_2^2 \beta_0^{2\ell}\delta_N^2,
		\end{align*}
		where the last step uses the choice of $M$. This verifies the claim for $\ell$, and hence proves \eqref{eq:recursion_2} by backward induction, for all $\ell\ge 0$. 
	\end{enumerate}  
\end{proof}

\begin{proof}[Proof of~\cref{lem:linfbdgen}]
	\noindent
	%First, recall the definition of $r_N$ from~\cref{theo:crit}.
	%where $\alpha_N=\max_{1\leq i\leq N}\sum_{j}A_N(i,j)^2$ is as defined in~\cref{theo:crit}.
	(a)	As in the proof of part (a) of Lemma \ref{lem:unifbd}, it suffices to prove the result for $\lambda$ large.
	To this effect, define an $N\times N$ matrix $\tilde{A}_N$ by setting $\tilde{A}_N(i,j):= A_N(i,j)/R_{\mathrm{max}}$ for $i\neq j$ and $\tilde{A}_N(i,i):= 1-R_i/R_{\mathrm{max}}$ where $R_{\mathrm{max}}=\max_{1\leq i\leq N}R_i$. Observe that $\mathbf{1}^{\top}\tilde{A}_N=\mathbf{1}^{\top}$, and so
	%and $\max_{1\leq i\leq j} |\tilde{A}_N(i,j)-A_N(i,j)|\lesssim r_N/\log{N}$. Also $\tilde{A}_N$ is a symmetric matrix with non-negative entries and $\limsup_{N\to\infty}\lambda_2(\tilde{A}_N)<1$. This gives
	\begin{align}\label{eq:covid_2}
	\notag&|(m_i(\ms)-\mm)-\sum_{j=1}^N\tilde{A}_N(i,j)(m_j(\ms)-\mm)|\\
	\notag=&|m_i(\ms)-\sum_{j=1}^N\tilde{A}_N(i,j)m_j(\ms)|
	\\
	\notag\le &|m_i-\sum_{j=1}^NA_N(i,j)m_j(\ms)|+\sum_{j=1}^N|A_N(i,j)-\tilde{A}_N(i,j)|\\
	\lesssim &|m_i-\sum_{j=1}^NA_N(i,j)\tanh(m_j(\ms))|+\max_{1\le i\le N}|m_i(\ms)|^3+\max_{1\le i\le N}|R_i-1|.
	\end{align}
	Using part (b) of~\cref{lem:auxtail}, a union bound as in the proof of part (a) of Lemma \ref{lem:unifbd} shows that for all $\lambda>0$ we have $\mmp(E_N^c)\le 2e^{-c\lambda^2}$ for some constant $c>0$ free of $N$,\text{ where }  
	\begin{equation}\label{eq:prebd}
	E_N:=\bigg\{\max\limits_{1\leq i\leq N}\big|m_i(\ms)-\sum_{j=1}^N A_N(i,j)\tanh( m_j(\ms))\big|\leq \lambda\sqrt{\alpha_N\log N} \bigg\}
	\end{equation}
	for some constant $c$ free of $N$, with $\alpha_N=\max_{1\le i\le N}\sum_{j=1}^NA_N(i,j)^2$ as in Theorem \ref{theo:uniq1}. Proceeding to bound the second term in the RHS of \eqref{eq:covid_2}, 
	note that, with $K:=\argmax_{1\leq i\leq N}|m_i(\ms)|$ and assuming $m_K(\ms)\geq 0$ without loss of generality, we have:
	\begin{small}
	\begin{align*}
	m_K(\ms)^3\lesssim m_K(\ms)-\tanh(m_K(\ms))\le m_K(\ms)-\sum_{j=1}^NA_N(K,j)\tanh(m_j(\ms))+\max_{1\le i\le N}|R_i-1|.
	\end{align*}
	\end{small}
	By a symmetric argument, we get:
	\begin{align}\label{eq:covid_3}
	\max_{1\leq i\leq N} |m_i(\ms)|^3\lesssim \max_{1\leq i\leq N}|m_i(\ms)-\sum_{j=1}^N A_N(i,j)\tanh(m_j(\ms))|+\max_{1\le i\le N}|R_i-1|.
	\end{align}
	Thus, combining \eqref{eq:covid_2} and \eqref{eq:covid_3}, on the set $E_N$ we have
	\begin{small}
	\begin{align}\label{eq:covid_4}
	\max_{1\le i\le N}|(m_i(\ms)-\mm)-\sum_{j=1}^N\tilde{A}_N(i,j)(m_j(\ms)-\mm)|\le C\Big[ \lambda\sqrt{\alpha_N\log N}+\max_{1\le i\le N}|R_i-1|\Big]
	\end{align}
	\end{small}
	for some $C<\infty$ free of $N$.
	Now, for any integer $\ell\ge 2$ we have
	\begin{align*}
	&|(m_i(\ms)-\mm)-\sum_{j=1}^N\tilde{A}^{\ell}_N(i,j)(m_j(\ms)-\mm)|\\
	\le&|(m_i(\ms)-\mm)-\sum_{j=1}^N\tilde{A}^{\ell-1}_N(i,j)(m_j(\ms)-\mm)|+\Big|\sum_{j=1}^N\tilde{A}_N^{\ell-1}(i,j)\Big\{(m_j(\ms)-\mm)\\ &\quad\qquad -\sum_{k=1}^N\tilde{A}_N(j,k)(m_k(\ms)-\mm)\Big\}\Big|\\
	\le&|(m_i(\ms)-\mm)-\sum_{j=1}^N\tilde{A}^{\ell-1}_N(i,j)(m_j(\ms)-\mm)|+\max_{1\le j\le N}\Big|(m_j(\ms)-\mm)\\ &\qquad\qquad -\sum_{k=1}^N\tilde{A}_N(j,k)(m_k(\ms)-\mm)\Big|,
	\end{align*}
	which, via a recursive argument gives
	\begin{align}\label{eq:covid_5}
	\notag&\max_{1\le i\le N}\Big|(m_i(\ms)-\mm)-\sum_{j=1}^N\tilde{A}^{\ell}_N(i,j)(m_j(\ms)-\mm)\Big|\nonumber \\
	\le& \ell \max_{1\le i\le N}|(m_i(\ms)-\mm)-\sum_{j=1}^N\tilde{A}_N(i,j)(m_j(\ms)-\mm)| \nonumber \\ \le & C\ell \Big(\lambda\sqrt{\alpha_N\log N}+\max_{1\le i\le N}|R_i-1|\Big),
	\end{align}
	where the last line uses \eqref{eq:covid_4} on the set $E_N$. 
	Using part (a) of~\cref{lem:tracebd}, we note the existence of $D$ free of $N$ such that for the choice $\ell=D\log N$ we have $\max_{1\le i\le N}A^\ell(i,i)\le \frac{3}{N}$. With this choice of $\ell$, we have
	\begin{align*}
	&\;\;\;\;\;\;\mmp\bigg(\max\limits_{1\leq i\leq N}|m_i(\ms)-\mm|\geq 2C\ell\Big[ \lambda\sqrt{\alpha_N\log N}+\max_{1\le i\le N}|R_i-1|\Big], E_N\bigg)\\ \le &
	%\overset{(a)}{\lesssim} 2\exp\left(-\overline{\theta}C^2\right)+
	\P\bigg(\max_{1\leq i\leq N}|\sum_{j=1}^N \tilde{A}_N^{\ell}(i,j)(m_j(\ms)-\mm)|\ge C\ell\Big[ \lambda\sqrt{\alpha_N\log N}+\max_{1\le i\le N}|R_i-1|\Big]
	\bigg)\\
	&\le \mmp\bigg(\sum_{j=1}^N (m_j(\ms)-\mm)^2\ge  \frac{C^2\ell^2N}{2}\Big[ \lambda\sqrt{\alpha_N\log N}+\max_{1\le i\le N}|R_i-1|\Big]^2\bigg),
	% \le &e^{-\delta C^2\ell^2 N\Big[ \lambda\sqrt{\alpha_N\log N}+\max_{1\le i\le N}|R_i-1|\Big]^2} e^{\varepsilon_N},
	\end{align*}
	where the last line uses Cauchy-Schwarz inequality. Fixing $\delta$ small enough and using part (c) of~\cref{lem:qformuni}, this gives
	\begin{align*}
	&\log \P\bigg(\max\limits_{1\leq i\leq N}|m_i(\ms)-\mm|\geq 2C\ell\Big[ \lambda\sqrt{\alpha_N\log N}+\max_{1\le i\le N}|R_i-1|\Big],E_N\bigg)\\
	\lesssim& -N\alpha_N (\log N)^3\lambda^2-N(\log N)^2\max_{1\le i\le N}|R_i-1|^2+\log \E e^{\delta \sum_{i=1}^N(m_i(\ms)-\mm)^2}\\
	\lesssim & -N\alpha_N (\log N)^3\lambda^2-N(\log N)^2\max_{1\le i\le N}|R_i-1|^2+\fa+\frac{1}{N}\Big[\sum_{i=1}^N(R_i-1)^2\Big]^2\\ &\qquad\qquad +\frac{1}{N}\Big[\sum_{i=1}^N(R_i-1)\Big]^2+\log N,
	\end{align*}
	from which the desired conclusion follows for $\lambda$ large enough on noting the inequality $N\alpha_N\ge \fa\gtrsim 1$.
\end{proof}
In order to prove~\cref{lem:linfbdgen}, parts (b) and (c), we need the following lemma whose proof we defer to the end of this section. 
\begin{lemma}\label{lem:mombound}
	
	Assume that~\eqref{eq:A1},~\eqref{eq:A2},~\eqref{eq:A4}~holds, and the RHS of~\eqref{eq:critmain} is bounded. Then, setting $\nu_N:=\mathbbm{E}_1^N[(N^{1/4}\overline{\ms})^6]$ the following conclusions hold:
	\begin{align}
	\nu_N\lesssim &\nu_N^{2/3}+\nu_N^{1/3}+\nu_N^{1/2}+\nu_N^{1/2}\sqrt{\frac{\mathbbm{E}\left[\sum_{i=1}^N (R_i-1)\sigma_i\right]^2}{N^{1/2}}},\label{eq:eigvecpr_1}\\
	%\end{align*}
	%
	%
	%\begin{lemma}\label{lem:eigvecpr}
	%	Let $\mq_N:= (q_{N1},\ldots ,q_{NN})$ denote an eigenvector corresponding to a eigenvalue of $A_N$ other than the maximal eigenvalue $\lambda_1(A_N)$. Then we have that $\mme[\sum_{i=1}^N q_{Ni}\sigma_i]^2\lesssim 1$. %Further, let $\tilde{A}_N$ denote the $N\times N$ matrix defined in the proof of~\cref{lem:linfbdgen} and $\tilde{\mq}_N:= (\tilde{q}_{N1},\ldots ,\tilde{q}_{NN})$, an eigenvector of $\tilde{A}_N$ other than $1$. Then $\mme[\sum_{i=1}^N \tilde{q}_{Ni}\sigma_i]^2\lesssim 1+\mme[N^{1/4}\mm]^2$.
	%\end{lemma}
	%\begin{lemma}\label{lem:contrcrit}
	%With $\mc:= (R_1-1,R_2-1,\ldots ,R_N-1)$. we have
	\mme\left[\sum\limits_{i=1}^N (R_i-1)\sigma_i\right]^2\lesssim &(\log N)^4\left(\sum_{i=1}^N (R_i-1)^2+N^{-1/2}\Big[\sum_{i=1}^N(R_i-1)\Big]^2\right)\bigg(1+\mme[(N^{1/4}\overline{\ms})^2]\bigg).\label{eq:contrcrit}
	\end{align}
\end{lemma}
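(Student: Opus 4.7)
The plan is to prove the two inequalities separately, using the exchangeable-pair Stein method for \eqref{eq:eigvecpr_1} and a variant of the recursion from Lemma \ref{lem:unifbd}(b) for \eqref{eq:contrcrit}.

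For \eqref{eq:eigvecpr_1}, set $T_N := N^{1/4}\oms$ and form the exchangeable pair $(\ms,\ms')$ by resampling one coordinate from its conditional distribution. The identity
\[
\E[T_N^3(T_N - T_N')] = \tfrac{3}{2}\E[T_N^2(T_N - T_N')^2] + O\bigl(\E[|T_N|\,|T_N - T_N'|^3]\bigr),
\]
whose error is negligible since $|T_N - T_N'|\lesssim N^{-3/4}$, provides the engine. On the left I use $\E[T_N - T_N'|\ms] = N^{-3/4}(\oms - \frac{1}{N}\sum_i\tanh m_i(\ms))$, Taylor-expand $\tanh$ to cubic order, and invoke $\bar{m}(\ms) - \oms = \frac{1}{N}\sum(R_i-1)\sigma_i$ to write
\[
\oms - \tfrac{1}{N}\textstyle\sum_i \tanh m_i(\ms) = -\tfrac{1}{N}\sum(R_i-1)\sigma_i + \tfrac{1}{3}\oms^3 + \mE_1(\ms).
\]
On the right, $\E[(T_N-T_N')^2|\ms] = \frac{2}{N^{3/2}}(1 - \frac{1}{N}\sum\sigma_i\tanh m_i)$, and a parallel Taylor expansion combined with $\frac{1}{N}\sum\sigma_i m_i = \oms^2 + \frac{1}{N}\sum(m_i-\oms)\sigma_i$ yields $\frac{2}{N^{3/2}}(1-\oms^2) + \frac{2}{N^{3/2}}\mE_2(\ms)$. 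Multiplying the identity through by $N^{3/2}$ turns the $\frac{1}{3}\oms^3 T_N^3$ contribution into $\frac{1}{3}\nu_N$; rearranging then gives
\[
\tfrac{1}{3}\nu_N \lesssim \E[T_N^2] + N^{-1/2}\E[T_N^4] + N^{3/4}\bigl|\E\bigl[T_N^3\cdot\tfrac{1}{N}\textstyle\sum(R_i-1)\sigma_i\bigr]\bigr| + N^{3/4}\bigl|\E[T_N^3\mE_1]\bigr| + \bigl|\E[T_N^2\mE_2]\bigr|.
\]
Cauchy-Schwarz on the third term produces $\nu_N^{1/2}\sqrt{\E[\sum(R_i-1)\sigma_i]^2/N^{1/2}}$ (the fourth summand of \eqref{eq:eigvecpr_1}), while the $\mE_1,\mE_2$ contributions (containing $\frac{1}{N}\sum(m_i-\bar m)^k$ for $k=2,3$, $\bar m - \oms$, $\frac{1}{N}\ms^\top\BN\ms$, and $m_i^5$ remainders) are controlled by Cauchy-Schwarz together with the quadratic-form bound Lemma \ref{lem:qformuni}(c) and the uniform bound from part (a) of this lemma; the standing assumption that the RHS of \eqref{eq:critmain} is bounded ensures all the prefactors collapse to $O(1)$. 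Applying $\E[T_N^k] \le \nu_N^{k/6}$ for $k=2,3,4$ then converts the residual $\E[T_N^k]$ factors into the remaining terms $\nu_N^{1/3}, \nu_N^{1/2}, \nu_N^{2/3}$.

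For \eqref{eq:contrcrit}, I adapt the recursive argument of Lemma \ref{lem:unifbd}(b). The obstruction is that at $\beta=1$ the operator $\beta A_N$ has spectral radius tending to $1$, so $\mathbf{c}^{(\ell)}:=A_N^\ell\mathbf{c}$ no longer decays geometrically along the Perron eigenvector $\vo$. I decompose $\mathbf{c}:=(R_1-1,\ldots,R_N-1)$ as $c_0\vo + \mathbf{c}_\perp$ with $c_0 := \frac{1}{N}\sum(R_i-1)$. The $\vo$-component contributes $c_0^2\,\E[(\sum\sigma_i)^2] = N^{-1/2}[\sum(R_i-1)]^2\E[T_N^2]$, exactly the second summand of the bound; meanwhile $\mathbf{c}_\perp$ lies in the invariant subspace of $A_N$ on which $\lVert A_N\rVert_{\mathrm{op}} = \lambda_2(A_N) < 1$ by \eqref{eq:well_connect}, restoring geometric decay so the recursion closes as in Lemma \ref{lem:unifbd}(b). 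The $(\log N)^4$ factor arises because the critical-regime uniform control is $\max_i|m_i-\bar m|\lesssim \sqrt{\alpha_N(\log N)^3}$ from part (a), in place of the $\sqrt{\alpha_N\log N}$ bound used in Lemma \ref{lem:unifbd}(b); this factor enters the recursion squared. The main obstacle in both parts is careful bookkeeping: for \eqref{eq:eigvecpr_1} each cross-term in $\mE_1,\mE_2$ must contribute only to one of the four listed powers of $\nu_N$; for \eqref{eq:contrcrit} the $\vo$-direction must be extracted cleanly so that $[\sum(R_i-1)]^2$ appears only with the factor $N^{-1/2}(1+\E[T_N^2])$ and not multiplied by higher moments of $T_N$.
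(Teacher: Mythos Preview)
Your approach to \eqref{eq:eigvecpr_1} is essentially the paper's: both multiply the exchangeable-pair identity $\E[T_N-T_N'\mid\ms]\approx N^{-3/2}T_N^3/3$ by $T_N^3$, use exchangeability to reduce $\E[T_N^3(T_N-T_N')]$ to $\E[T_N^2]$, and bound the Taylor-remainder cross-terms via Lemma~\ref{lem:qformuni}(c) and Lemma~\ref{lem:linfbdgen}(a). Your organization differs cosmetically (you expand both sides of the Stein identity rather than isolating the drift first), but the content is the same.

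Your treatment of \eqref{eq:contrcrit}, however, has a genuine gap. You write that $\mathbf{c}_\perp$ ``lies in the invariant subspace of $A_N$ on which $\lVert A_N\rVert_{\mathrm{op}}=\lambda_2(A_N)<1$.'' Neither clause is correct. First, $\vo$ is not an eigenvector of $A_N$ (only approximately so under \eqref{eq:A4}), so $\vo^\perp$ is not $A_N$-invariant and the recursion does not decouple cleanly. Second, and more seriously, even if $\vo$ were exactly the Perron eigenvector, the operator norm on $\vo^\perp$ would be $\max(\lambda_2(A_N),|\lambda_N(A_N)|)$, and \eqref{eq:well_connect} says nothing about $\lambda_N(A_N)$, which equals $-1$ for bipartite regular graphs. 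So the geometric decay you need simply fails in general, and the recursion from Lemma~\ref{lem:unifbd}(b) cannot close in finitely many steps.

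The paper deals with both issues by replacing $A_N$ with the exactly stochastic $\tilde{A}_N$ (so $\vo$ is a genuine eigenvector), invoking Lemma~\ref{lem:tracebd}(b) to obtain only the weak gap $|\lambda_i(\tilde{A}_N)|\le 1-\delta/\log N$ for the \emph{middle} eigenvalues $2\le i\le N-1$, and handling the bottom eigenvector $\tilde{\mathbf{q}}_N$ separately via a direct estimate on $\E[(\tilde{\mathbf{q}}_N^\top\ms)^2]$. Because the spectral gap is only $O(1/\log N)$, one must iterate $L=D(\log N)^2$ times to kill the middle spectrum, and backward induction over these $L$ steps produces the factor $L^2\asymp(\log N)^4$. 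Your attribution of the $(\log N)^4$ to the $(\log N)^3$ in the uniform bound of Lemma~\ref{lem:linfbdgen}(a) is therefore the wrong mechanism; in the paper's argument the sixth-moment input $\E\sum_i m_i(\ms)^6\lesssim 1$ carries no logarithms, and the logs enter solely through the iteration count.
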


\begin{proof}[Proof of~\cref{lem:linfbdgen}, parts (b) and (c)]
	Use~\eqref{eq:contrcrit} and the fact that the RHS of \eqref{eq:critmain} is bounded to get \[\mme\left[\sum_{i=1}^N (R_i-1)\sigma_i\right]^2\lesssim \sqrt{N}(1+\mme[(N^{1/4}\overline{\ms})^2])\lesssim \sqrt{N}(1+\nu_N^{1/3}).\]
	Along with \eqref{eq:eigvecpr_1}, this gives %$\nu_N$ from~\cref{lem:mombound} and assume that $\liminf \nu_N=\infty$. Therefore, there exists a subsequence along which $\nu_N$ converges to $\infty$. By an abuse of notation,~\cref{lem:mombound} then implies,
	$\nu_N\lesssim \nu_N^{2/3}+\nu_N^{1/3}+\nu_N^{1/2}(1+\nu_N^{1/3})+1$, and so $\nu_N$ must be bounded, thereby proving part (b). Now, part (c) is an immediate consequence of part (b) and~\eqref{eq:contrcrit}.
\end{proof}

\begin{proof}[Proof of~\cref{lem:mombound}]
	\begin{enumerate}
		\item[(a)] Proof of \eqref{eq:eigvecpr_1}.
		
		%We begin with the usual exchangeable pair and $T_N=N^{1/4}\overline{\ms}$ and $T_N'=N^{1/4}\overline{\ms}'$, to get~\eqref{eq:reflat} as in the proof of~\cref{theo:crit}. Next, look at the real valued function, $f(x)=\tanh(x)-x+x^3/3$. It is easy to see from standard calculus considerations that $|f(x)|\leq 2|x|^5/15$ for all $x\in\mathbb{R}$. Plugging the above bound and $T_N=N^{1/4}\overline{\ms}$ in~\eqref{eq:reflat}, we get:
		To begin, borrowing notation from the proof of Theorem \ref{theo:crit} and using \eqref{eq:reflat} gives the existence of $C<\infty$ such that
		\begin{align*}
		&\;\;\;\big|\mme[T_N-T_N'|\ms]-N^{-3/2}T_N^3/3\big|\nonumber \\ &\le \frac{2}{15} N^{-2}|T_N|^5+C\bigg\{N^{-3/4}|\overline{\ms}-\mm|+N^{-2}|T_N|\sum_{i=1}^N (m_i(\ms)-\mm)^2\\ &\qquad\qquad+N^{-7/4}\bigg|\sum_{i=1}^N (m_i(\ms)-\mm)^3\bigg|\bigg\}.
		\end{align*}
		On multiplying both sides of the above inequality by $N^{3/2}|T_N|^3$ and taking expectation gives
		\begin{align}\label{eq:mombound1}
		\mme[T_N^6]&\leq  (2/5)N^{-1/2}\mme|T_N|^8+3C\bigg\{N^{3/4}\mme\left[|T_N|^3|\overline{\ms}-\mm|\right]\nonumber \\&\qquad\qquad +N^{-1/2}\mme\left[|T_N|^4\sum_{i=1}^N (m_i(\ms)-\mm)^2\right]\nonumber \\ &+N^{-1/4}\mme\left[|T_N|^3\big|\sum_{i=1}^N (m_i(\ms)-\mm)^3\big|\right]\bigg\}+3N^{3/2}\big|\mme(T_N-T_N')T_N^3\big|.
		\end{align}
		We will now bound each of the terms in the RHS of \eqref{eq:mombound1}. 
		To begin, note that  that  $|T_N-T_N'|\leq 2N^{-3/4}$ and $\mme[T_N]=\mme[T_N']$. This, along with the fact that $(T_N,T_N')$ is an exchangeable pair gives
		\begin{align}\label{eq:5}
		\notag\mme(T_N-T_N')T_N^3&=(1/2)\mme(T_N-T_N')T_N^3-(1/2)\mme(T_N-T_N')(T_N')^3\\ &=(1/2)\mme\notag\left[(T_N-T_N')^2(T_N^2+T_NT_N'+(T_N')^2)\right]\\
		&\leq 6N^{-3/2}\mme[T_N^2]\le 6 N^{-3/2} \nu_N^{1/6},
		\end{align}
		where $\nu_N=\E[(N^{1/4}\bar{\mathbf{\sigma}})^6]$ as in the statement of the lemma. Also with $\varepsilon_N,r_N$ as in the statement of Theorem \ref{theo:crit}, use part (c) of Lemma \ref{lem:qformuni}, and part (a) of Lemma \ref{lem:linfbdgen} to get that for any positive integer $p$, we have
		\begin{align}\label{eq:lem:qformuni}
		\E \Big[\sum_{i=1}^N(m_i(\ms)-\mm)^2\Big]^p \lesssim \varepsilon_N^p,\quad \E \max_{1\le i\le N}|m_i(\ms)-\mm|^p \lesssim r_N^p.
		\end{align}
		Finally, since the RHS of \eqref{eq:critmain} is bounded, we have 
		\begin{align}\label{eq:bound_conclusion}
		\varepsilon_N\lesssim \sqrt{N},\quad \varepsilon_N r_N\lesssim N^{1/4}, \quad \lVert\mathbf{c}\rVert_2^2+N^{-1/2}\Big[\sum_{i=1}^Nc_i\Big]^2\lesssim \sqrt{N}.
		\end{align}
		Armed with these estimates and proceeding to bound the second, third and fourth terms in~\eqref{eq:mombound1}, use H\"older's inequality  to get
		\begin{align}\label{eq:2}
		N^{3/4}\mme[|T_N|^3|\overline{\ms}-\mm|]\leq &N^{-1/4}\sqrt{\nu_N}\sqrt{\mme\left[\sum_{i=1}^N (R_i-1)\sigma_i\right]^2}\\
		\label{eq:3}\mme\left[T_N^4\sum_{i=1}^N (m_i(\ms)-\mm)^2\right]\leq &\nu_N^{2/3}\left(\mme\left[\sum_{i=1}^N (m_i(\ms)-\bar{\boldsymbol{m}})^2\right]^3\right)^{1/3}\nonumber \\ &\lesssim \nu_N^{2/3}\varepsilon_N\lesssim \nu_N^{2/3}\sqrt{N}
		\end{align}
		\begin{align}\label{eq:4}
		&\;\;\;\;\notag\mme\left[|T_N|^3\Bigg|\sum_{i=1}^N (m_i(\ms)-\mm)^3\Bigg|\right]&\nonumber \\ &\leq\sqrt{\nu_N}\bigg(\E \Big[\sum_{i=1}^N(m_i(\ms)-\mm)^2\Big]^4\bigg)^{1/4}\bigg(\E \Big[\max_{1\le i\le N}(m_i(\ms)-\mm)^4\Big]\bigg)^{1/4}\nonumber \\
	    &\lesssim \sqrt{\nu_N}\varepsilon_Nr_N\lesssim \sqrt{\nu_N} N^{1/4}\end{align}	%\qquad ,\qquad \mme[T_N^2]\leq \big(\mme[T_N^6]\big)^{1/3} \qquad \mbox{ and}
		where the last bounds in \eqref{eq:3} and \eqref{eq:4} use \eqref{eq:lem:qformuni} and  \eqref{eq:bound_conclusion}. %Using part (c) of Lemma~\ref{lem:qformuni} we have
		%$\bigg(\E\Big[ \sum_{i=1}^N(m_i(\ms)-\mm)^2\Big]^3\bigg)^{1/3}\lesssim \varepsilon_N$,
		%whereas using part (a) of Lemma \ref{lem:linfbdgen} gives
		%\begin{align*}
		%\bigg(\E \Big[\sum_{i=1}^N(m_i(\ms)-\mm)^3\Big]^2\bigg)^{1/2}
		%\le \bigg(\E \max_{1\le i\le N}|m_i(\ms)-\mm|^4\bigg)^{1/4} \bigg(\E \Big[\sum_{i=1}^N(m_i(\ms)-\mm)^2\Big]^4\bigg)^{1/4}
		% \lesssim\epsilon_N r_N,
		%\end{align*}
		%where $\varepsilon_N,r_N$ are as defined in Theorem \ref{theo:crit}. 
		%Since the RHS of \eqref{eq:critmain} is bounded, we have $\varepsilon_N\lesssim \sqrt{N}$, and $\varepsilon_N r_N\lesssim N^{1/4}$. 
		Finally, for the fifth term in the RHS of \eqref{eq:mombound1}, note that $|T_N|\leq N^{1/4}$, and so the first term in the RHS of \eqref{eq:mombound1} is bounded by $(2/5)\E[T_N^6]$. Combining this along with \eqref{eq:mombound1}, \eqref{eq:5}, \eqref{eq:2}, \eqref{eq:3} and \eqref{eq:4} gives
		%	Plugging in the above observations in~\eqref{eq:mombound1}, we get
		%	\begin{align}\label{eq:mombound2}
		%	\mme[T_N^6]&\leq (2/5)\mme[T_N^6]+3CN^{3/4}\mme\left[|T_N|^3|\overline{\ms}-\mm|\right]+3CN^{-1/2}\mme\left[|T_N|^4\sum_{i=1}^N (m_i(\ms)-\mm)^2\right]\nonumber \\ &+3CN^{-1/4}\mme\left[|T_N|^3\big|\sum_{i=1}^N (m_i(\ms)-\mm)^3\big|\right]+18\mme[T_N^2]
		%	\end{align}
		%	 and and the assumptions stated in the statement of the lemma, we automatically have polynomial moment control of the form: 
		%$$\left(\mme\left[\Bigg|\sum_{i=1}^N (m_i(\ms)-\bar{\boldsymbol{m}})^3\Bigg|\right]^4\right)^{1/4}\lesssim o\left(N^{1/4}\right)\qquad \mbox{and} \qquad \left(\mme\left[\sum_{i=1}^N (m_i(\ms)-\bar{\boldsymbol{m}})^2\right]^5\right)^{1/5}\lesssim o\left(N^{1/2}\right).$$
		%Now, plugging in $\nu_N=\mme[(N^{1/4}\overline{\ms})^6]$ and the above bounds in~\eqref{eq:mombound2}, we get:
		\begin{align*}
		\nu_N\lesssim \nu_N^{1/3}+\nu_N^{1/2}+\nu_N^{2/3}+\nu_N^{1/2}\sqrt{\frac{\mathbbm{E}\left[\sum_{i=1}^N (R_i-1)\sigma_i\right]^2}{N^{1/2}}}
		\end{align*}
		which completes the proof of \eqref{eq:eigvecpr_1}

		\item[(b)] Proof of~\eqref{eq:contrcrit}.
		
		To begin, for any vector ${\bf h}:=(h_1,\cdots,h_N)$ write \begin{align*}\sum_{i=1}^N h_i\sigma_i=\sum_{i=1}^Nh_i(\sigma_i-& \tanh(m_i(\ms)))+\sum_{i=1}^Nh_i(\tanh(m_i(\ms))-\tanh(\mm))\\ &+\tanh(\mm)\sum_{i=1}^Nh_i,\end{align*} which using part (b) of Lemma \ref{lem:auxtail} gives
		\begin{align}\label{eq:2-6'}
		\E\Big[\sum_{i=1}^Nh_i\sigma_i\Big]^2 \lesssim &\lVert\mathbf{h}\rVert_2^2+\lVert\mathbf{h}\rVert_2^2\varepsilon_N+\Big[\sum_{i=1}^Nh_i\Big]^2\E \mm^2\lesssim \lVert\mathbf{h}\rVert_2^2\varepsilon_N+\Big[\sum_{i=1}^Nh_i\Big]^2\E (\mm^2),
		%\lesssim &\epsilon_N\lVert\mathbf{c}\rVert_2^2+\Big[\sum_{i=1}^Nc_i\Big]^2,
		\end{align}
		where the second line uses part (c) of Lemma \ref{lem:qformuni}, and $\varepsilon_N$  equals the RHS of \eqref{eq:critqformuni}. Setting ${\bf c}={\bf R}-{\bf 1}$ and using \eqref{eq:2-6'} with ${\bf h}={\bf c}$ gives 
		%let ${\bf c}:={\bf R}-{\bf 1}$, and write \[\sum_{i=1}^Nc_i\sigma_i=\sum_{i=1}^Nc_i(\sigma_i-\tanh(m_i(\ms)))+\sum_{i=1}^Nc_i(\tanh(m_i(\ms))-\tanh(\mm))+\tanh(\mm)\sum_{i=1}^Nc_i.\]
		%From this, using part (b) of Lemma \ref{lem:auxtail} gives
		\begin{align}\label{eq:2-6}
		\E\Big[\sum_{i=1}^Nc_i\sigma_i\Big]^2 \lesssim &\lVert\mathbf{c}\rVert_2^2+\lVert\mathbf{c}\rVert_2^2\varepsilon_N+\Big[\sum_{i=1}^Nc_i\Big]^2\E \mm^2\lesssim N,
		%\lesssim &\epsilon_N\lVert\mathbf{c}\rVert_2^2+\Big[\sum_{i=1}^Nc_i\Big]^2,
		\end{align}
		where the last line uses \eqref{eq:bound_conclusion}.
		% Since the RHS of \eqref{eq:critmain} is bounded, we have \begin{align}\label{eq:bound_conclusion}
		%\varepsilon_N\lesssim \sqrt{N},\quad \lVert\mathbf{c}\rVert_2^2+\Big[\sum_{i=1}^Nc_i\Big]^2\lesssim \sqrt{N},\
		%\end{align}giving the crude bound
		Along with \eqref{eq:eigvecpr_1} this gives
		$\nu_N\lesssim  \nu_N^{1/3}+\nu_N^{1/2}+\nu_N^{2/3}+\nu_N^{1/2}\sqrt{N}$, and so 
		\begin{align}\label{eq:m1}
		\nu_N\lesssim \sqrt{N}\Rightarrow \E \bar{\ms}^6\lesssim N^{-1}.
		\end{align}
		Also, an argument similar to the derivation of \eqref{eq:2-6} shows that for any positive integer $p$, we have
		\begin{align}\label{eq:m2}
		\E(\bar{\ms}-\mm)^{2p}&=N^{-2p}\E\Big[\sum_{i=1}^Nc_i\sigma_i\Big]^{2p}
		\\ &\lesssim N^{-2p}\bigg(\lVert\mathbf{c}\rVert_2^{2p}+\lVert\mathbf{c}\rVert_2^{2p}\varepsilon_N^p+\left(\sum_{i=1}^Nc_i\right)^{2p}\E\mm^2\bigg)\lesssim N^{-p},
		\end{align}
		where the last bound uses \eqref{eq:bound_conclusion}. Combining we have the following conclusions:
		\begin{align}
		\label{eq:m3}\E {\mm}^{6}\lesssim& \E(\bar{\ms})^{6}+\E(\bar{\ms}-\mm)^{6}\lesssim \frac{1}{N},\\
		\notag \E \Big(\sum_{i=1}^Nm_i(\ms)^6\Big)\lesssim& N\E {\mm}^6+\sqrt{\E\max_{1\le i\le N}(m_i(\ms)-\mm)^8}\sqrt{\E \Big[\sum_{i=1}^N(m_i(\ms)-\mm)^2\Big]^2}\\
		\label{eq:m4}\lesssim &1+r_N^4\varepsilon_N\lesssim 1,
		\end{align}
		where \eqref{eq:m3} uses
		\eqref{eq:m1} and \eqref{eq:m2} with $p=3$, and \eqref{eq:m4} uses \eqref{eq:m3} along with \eqref{eq:lem:qformuni} and \eqref{eq:bound_conclusion}. Armed with these estimates, we now focus on deriving \eqref{eq:contrcrit}. 
		
		Let $\tilde{A}_N$ be as defined in the proof of part (a) of  Lemma \ref{lem:linfbdgen}, %and set $H_N:= A_N-\tilde{A}_N$.%a $N\times N$ matrix such $\tilde{A}_N(i,j):= A_N(i,j)$ for $i\neq j$ and $\tilde{A}_N(i,i)=R_M-R_i$ where $R_M$ denotes the maximum row sum of $A_N$. Set $H_N:= A_N-\tilde{A}_N$. 
		and set  $\mc^{(\ell)}:= \mc^{\top}\tilde{A}_N^\ell$ and $x_\ell:=\mme\left[\sum\limits_{i=1}^N c^{(\ell)}_i\sigma_i\right]^2$ for $\ell\ge 0$. As in the proof of part (b) of Lemma \ref{lem:unifbd}, we can write $x_\ell=T_{1,\ell}+T_{2,\ell}+T_{3,\ell}$, where
		\begin{align*}
		T_{1,\ell}:= &\mme\bigg[\sum\limits_{i=1}^N  c^{(\ell)}_i(\sigma_i-\tanh{m_i(\ms)})\bigg]^2\qquad,\qquad T_{2,\ell}:= \mme\bigg[\sum\limits_{i=1}^N c^{(\ell)}_i\tanh{m_i(\ms)}\bigg]^2,\\ & T_{3,\ell}:= 2\mme\left[\sum_{i\ne j} c^{(\ell)}_ic^{(\ell)}_j(\sigma_i-\tanh{m_i(\ms)})\tanh{m_j(\ms)}\right].
		\end{align*}
		%Throughout the course of this proof, we will use $M>0$ to denote a  universal constant and consider $\ell\lesssim (\log{N})^2$. The value of $M$ may change from line to line. Also note that $\|\mc^{(\ell)}\|\leq \|\mc\|$. 
		By the argument presented in the proof of part  (b) of Lemma \ref{lem:unifbd} we have $T_{1,\ell}\lesssim \lVert \mc^{(\ell)}\rVert_2^2\le  \lVert \mc\rVert_2^2  $, and $T_{3,\ell}\lesssim \lVert \mc\rVert_2^2 $.
		%	define $m_i^j(\ms):= \sum_{k\ne j}^N A_N(i,k)\sigma_k$, and note that
		%	\begin{align*}
		%	&\;\;\;\;\sum_{i,j,i\neq j} c^{(\ell)}_ic^{(\ell)}_j\mme[(\sigma_i-\tanh{m_i(\ms)})\tanh{m_j(\ms)}]\\ &\overset{(a)}{=} \sum_{i,j,i\neq j} c^{(\ell)}_ic^{(\ell)}_j\mme[(\sigma_i-\tanh{m_i(\ms)})(\tanh{m_j(\ms)}-\tanh{m_j^i(\ms)})]\\&\leq \sum_{i,j,i\neq j} |c^{(\ell)}_i||c^{(\ell)}_j|A_N(i,j)\leq M\sum_{i=1}^N (c^{(\ell)}_i)^2\leq M\|\mc\|^2.
		%	\end{align*}
		%	Here (a) follows from the fact that $\mme[(\sigma_i-\tanh{m_i(\ms)})\tanh{m_j^i(\ms)}]=0$ for $i\neq j$. 
		Next, using Taylor Series expansion, we can write $\tanh(m_i(\ms))=m_i(\ms)+\xi_im_i(\sigma)^3$ for random variables $\{\xi_i\}_{1\le i\le N}$ which are uniformly bounded by $1$ in absolute value. Consequently, 
		\begin{align}\label{eq:t2l}
		\notag T_{2,\ell}-x_{\ell+1}=&\mme\bigg[\ms^\top A_N\mathbf{c}^{(\ell)}+\sum_{i=1}^N c^{(\ell)}_i \xi_i m_i(\ms)^3\bigg]^2-\mme\bigg[ \ms^\top \tilde{A}_N {\mathbf c}^{(\ell)}\bigg]^2\\
		\notag\le &2\sqrt{x_{\ell+1}}\sqrt{\E\Big[\sum_{i=1}^N|c_i^{(\ell)}m_i(\ms)^3|\Big]^2}+2\sqrt{x_{\ell+1}}\sqrt{\E\Big[{\bf c}^{(\ell)}H_N \ms\Big]^2}+\E\Big[{\bf c}^{(\ell)}H_N \ms\Big]^2\\
		\notag&+\E\Big[\sum_{i=1}^N|c_i^{(\ell)}m_i(\ms)^3|\Big]^2
		+2\sqrt{\E\Big[{\bf c}^{(\ell)}H_N \ms\Big]^2}\sqrt{\E\Big[ \sum_{i=1}^N|c_i^{(\ell)}m_i(\ms)^3|\Big]^2}\\%+\E\Big[\sum_{i=1}^N|c_i^{(\ell)}m_i(\ms)^3|\Big]^2\\
		\notag\le &2\sqrt{x_{\ell+1}} \lVert \mc\rVert_2^2 \sqrt{\E \sum_{i=1}^Nm_i(\ms)^6}+2\sqrt{x_{\ell+1}}\sqrt{E\Big[{\bf c}^{(\ell)}H_N \ms\Big]^2}+\E\Big[{\bf c}^{(\ell)}H_N \ms\Big]^2\\
		+& \lVert \mc\rVert_2^2 \E\Big[ \sum_{i=1}^Nm_i(\ms)^6\Big]+2 \lVert \mc\rVert_2^2 \sqrt{\E\Big[{\bf c}^{(\ell)}H_N \ms\Big]^2}\sqrt{\E \sum_{i=1}^Nm_i(\ms)^6}.
		%\mme\bigg[\sum_{i=1}^N c^{(\ell+1)}_i\sigma_i+(\mc^{(\ell)})^{\top}H_N\ms+\sum_{i=1}^N c^{(\ell)}_i \xi_i m_i(\ms)^3\bigg]^2
		\end{align}
		%To simplify notation, define $\GN_1:=\big|\sum_{1=1}^N c_i\big|+\sum_{i=1}^N c_i^2$ and recall the definition of $\varepsilon_N$ from~\cref{theo:crit}. Observe the following chain of inequalities:
		Proceeding to bound the RHS of \eqref{eq:t2l}, %use \eqref{eq:eigvecpr_1} to note that
		use \eqref{eq:A4} and \eqref{eq:m2} respectively to note that $\lVert H_N\rVert_{\text{op}}\lesssim N^{-1/4}$, and $N\E (\mm)^2\lesssim N \E (\bar{\ms})^2+ 1$, and an application of \eqref{eq:2-6'} with ${\bf h}=H_N\mc^{(\ell)})$ gives
		%Using this, we first bound $\E\Big[{\bf c}^{(\ell)}H_N \ms\Big]^2$ below:
		\begin{align}
		\;\;\;\;\mme\left[(\mc^{(\ell)})^{\top}H_N\ms\right]^2
		%\notag&\le  3\mme\left[\left(\sum_{i,j=1}^N c^{(\ell)}_iH_N(i,j)(\sigma_j-\tanh(m_j(\ms)))\right)^2+3\left(\sum_{i,j=1}^N c^{(\ell)}_iH_N(i,j)\big(\tanh(m_j(\ms))-\tanh(\mm)\big)\right)^2\right]\\
		%\notag &+3\left(\sum_{i,j=1}^N c^{(\ell)}_i H_N(i,j)\tanh(\mm)\right)^2 \\
		&\lesssim \lVert \mc^{(\ell)}\rVert_2^2  \lVert H_N\rVert_\text{op}^2\bigg(\varepsilon_N+N \E(\mm^2)\bigg)
		\lesssim  \lVert \mc\rVert_2^2 \mu_N,\label{eq:chm}
		%\lesssim (R_M-1)^2\GN_1\varepsilon_N+N^{-1/2}\GN_1\varepsilon_N\mme[(N^{1/4}\mm)^2]\lesssim \GN_1\big(1+\mme[(N^{1/4}\mm)^2]\big)
		\end{align}
		with $\mu_N:=1+\E(N^{1/4}\bar{\ms})^2$, where the second inequality uses \eqref{eq:bound_conclusion}~and~\eqref{eq:m2}. We now claim that there exists a constant $D>0$ such that 
		\begin{align}\label{eq:claim_recur}
		x_{D(\log N)^2}\lesssim \mu_N\bigg( \lVert \mc\rVert_2^2+N^{-1/2}\bigg[\sum_{i=1}^Nc_i\bigg]^2\bigg)^2.
		\end{align} 
		Given this claim, we have the existence of a constant $C$ free of $N$ such that 
		\begin{align}\label{eq:induct_1}
		x_{D(\log N)^2}&\le  C^2\mu_N\bigg( \lVert \mc\rVert_2^2+N^{-1/2}\bigg[\sum_{i=1}^Nc_i\bigg]^2\bigg)^2.
		\end{align}
		Also, 
		%	Next we have
		%	\begin{align}\label{eq:chm2}
		%	\notag\E \sum_{i=1}^Nm_i(\ms)^6\lesssim& \E \sum_{i=1}^N(m_i(\ms)-\mm)^6+N^{-1/2} \E (N^{1/4}\mm)^6\\
		%	\lesssim &\varepsilon_N r_N^4+N^{-1/2}\E(N^{1/4}\mm)^2\lesssim \mu_N,
		%	\end{align}
		%	where the last bound uses the fact that $r_N^4\lesssim N^{-1/2}$, as the RHS of \eqref{eq:critmain} is bounded. 
		using \eqref{eq:chm} and \eqref{eq:m4}, and making $C$ bigger if needed, for all $\ell\ge 0$ we have
		%\[\mme\left[(\mc^{(\ell)})^{\top}H_N\ms\right]^2\le C^2\lVert \mathbf{c}\rVert|^2 \mu_N,\quad \E \sum_{i=1}^Nm_i(\ms)^6\le C^2\mu_N\]
		%which along with \eqref{eq:t2l}, \eqref{eq:chm} and \eqref{eq:m4} gives
		%	The last line above follows from a combination of Lemmas~\ref{lem:auxtail} and~\ref{lem:qformuni}. Next, observe that,
		%	\begin{align}\label{eq:contrcrit1}
		%	\mme\left[\sum_{i=1}^N c^{(\ell)}_i\xi_im_i(\ms)^3\right]^2\lesssim N\|\mc^{(\ell)}\|\mme\left[\max_{1\leq i\leq N}|m_i(\ms)|^6\right]\lesssim \|\mc\|^2\lesssim \GN_1
		%	\end{align}
		%	where the above display follows from the fact that $\mme[\max_{1\leq i\leq N}|m_i(\ms)|^6]\lesssim N^{-1}$ as was a part of the proof of~\cref{lem:eigvecpr} (see~\eqref{eq:prelpf}). Now plugging in these bounds for $x_\ell$, we get:
		\begin{align}\label{eq:contrcrit2}
		x_\ell\leq &x_{\ell+1}+2C\sqrt{x_{\ell+1}}\lVert \mathbf{c}\rVert_2\sqrt{\mu_N}+C^2\lVert \mathbf{c}\rVert_2^2\mu_N.
		%+&C^2\lVert \mathbf{c}^{(\ell)}\rVert_2^2\mu_N+2C^2\lVert \mathbf{c}^{(\ell)}\rVert_2\mu_N	%\sqrt{x_{\ell+1}\GN_1\big(1+\mme[(N^{1/4}\mm)^2]\big)}+\GN_1\big(1+\mme[(N^{1/4}\mm)^2]\big)
		\end{align}
		%We now claim that there exists a constant $\lambda>0$ such that with $\ell=\lambda(\log N)^2$ we have $x_{\ell}\le  C^2\lVert \mc\rVert_2^2\mu_N$. 
		With $L=D (\log N)^2$, we will now show that the bound 
		\begin{align}\label{eq:bd_recur}
		x_{\ell}\le (L-\ell+1)^2C^2 \left[\lVert \mc\rVert_2^2 +N^{-1/2}\left(\sum_{i=1}^Nc_i\right)^2\right]
		\end{align} holds for all $\ell \in [0,L]$ by backwards induction. By \eqref{eq:induct_1} we have that \eqref{eq:bd_recur} holds for $\ell=L$. Suppose \eqref{eq:bd_recur} holds for $\ell+1$ for some $\ell\in [0,L-1]$. Using \eqref{eq:contrcrit2} gives 
		\[x_\ell \le C^2\mu_N  \lVert \mc\rVert_2^2 \Big[(L-\ell)^2+2(L-\ell)+1\Big]= (L-\ell+1)^2C^2 \mu_N  \lVert \mc\rVert_2^2, \]
		verifying \eqref{eq:bd_recur} for $\ell$, and thus verifying \eqref{eq:bd_recur} for all $\ell\in [0,L]$ by induction. Setting $\ell=0$ in \eqref{eq:bd_recur} we get the bound
		\begin{align*}
		\E\Big(\sum_{i=1}^Nc_i\sigma_i\Big)^2&\le L^2C^2\mu_N\bigg[\sum_{i=1}^Nc_i^2+N^{-1/2}\Big(\sum_{i=1}^Nc_i\Big)^2\bigg]\\ &\le C^2D^2\mu_N(\log N)^4\bigg[\sum_{i=1}^Nc_i^2+N^{-1/2}\Big(\sum_{i=1}^Nc_i\Big)^2\bigg],
		\end{align*}
		which verifies \eqref{eq:contrcrit}, as desired.
		\\
		
		It thus remains to verify \eqref{eq:claim_recur}, for which using spectral decomposition write $\tilde{A}_N=\sum_{i=1}^N\tilde{\lambda}_i\tilde{\bf q}_i\tilde{\bf q}_i^\top$, where we set $\tilde{\lambda}_i:=\lambda_i(\tilde{A}_N)$ for convenience of notation . With $L=D(\log N)^2$, this gives
		%Now suppose that $x_{\ell}\le M$
		%by suitably changing $M$. Now set $l=C(\log{N})^2$ for some fixed but large $C>0$ (free of $N$, to be determined later). Let $\tilde{\lambda}_1\geq \ldots \geq\tilde{\lambda}_N$ and $\tilde{\mq}_1,\ldots ,\tilde{\mq}_N$ be the eigenvalues and eigenvectors from the standard spectral decomposition of $\tilde{A}_N$. It is easy to check that $\tilde{\lambda}_1=1$ and $\tilde{\mq}_1=\mathbf{1}/\sqrt{N}$. Then,
		\begin{align*}
		\mc^{\top}\tilde{A}_N^L\ms&=\overline{\ms}\sum_{i=1}^N c_i+\tilde{\lambda}_N^L \mc^{\top}\tilde{\mq}_N\tilde{\mq}_N^{\top}\ms+\sum_{i=2}^{N-1}\tilde{\lambda}_i^L\mc^{\top}\tilde{\mq}_i\tilde{\mq}_i^{\top}\ms\\ &=\overline{\ms}\sum_{i=1}^N c_i+\tilde{\lambda}_N^L \mc^{\top}\tilde{\mq}_N\tilde{\mq}_N^{\top}\ms+O(N^{-cD+2}), 
		\end{align*}
		where the last equality uses~\cref{lem:tracebd}  to get  \[\max_{2\le i\le N-1}|\tilde{\lambda}_i|^{L}\le \Big(1-\frac{c}{\log N}\Big)^{\ell}\le N^{-cD}\] for some $c>0$. Consequently for $D$ large enough we have
		%The above display follows from~\cref{lem:tracebd} by choosing $C$ fixed, but large. As a result by application of~\cref{lem:eigvecpr}, we have:
		\begin{align}\label{eq:contrcritrev1}
		\mme\left[\mc^{\top}\tilde{A}_N^L\ms\right]^2\lesssim \Big[\sum_{i=1}^Nc_i\Big]^2\mme[\overline{\ms}^2]+\lVert \mathbf{c}\rVert_2^2\mme[(\tilde{\bf q}_N^{\top}{\ms})^2].
		\end{align}
		Since $\tilde{\mq}_N^{\top}\tilde{A}_N=\lambda_N \tilde{\mq}_N^{\top}$ where $\tilde{\lambda}_N$ is bounded away from $1$ by~\eqref{eq:well_connect}, we have
		\begin{align*}%\label{eq:eig_contr}
		&\;\;\;\;\notag(1-\tilde{\lambda}_N)\sum_{i=1}^N \tilde{q}_N(i)\sigma_i\\ &=\sum_{i=1}^N \tilde{q}_{N}(i)(\sigma_i-m_i(\ms))+\tilde{\mathbf q}_N^{\top}H_N\sigma\\
		&=\sum_{i=1}^N \tilde{q}_{N}(i)(\sigma_i-\tanh(m_i(\ms)))+\sum_{i=1}^N \tilde{q}_{N}(i)(\tanh(m_i(\ms))-m_i(\ms))+\tilde{\mathbf q}_N^{\top}H_N\sigma.
		\end{align*}
		This immediately gives
		\begin{align*}
		&\;\;\;\;\notag(1-\tilde{\lambda}_N)^2\mme\left[\sum_{i=1}^N \tilde{q}_{N}(i)\sigma_i\right]^2\\ &\lesssim \mme\left[\sum_{i=1}^N \tilde{q}_{N}(i)(\sigma_i-\tanh(m_i(\ms)))\right]^2+\mme\left[\sum_{i=1}^N |\tilde{q}_{N}(i)||m_i(\ms)|^3\right]^2+\E \Big[\tilde{\mathbf q}_N^\top H_N\ms\Big]^2\\
		\notag &\leq \sum_{i=1}^N \tilde{q}_{N}(i)^2+\sqrt{\sum_{i=1}^N \tilde{q}_{N}(i)^2}\sqrt{\mme\left[\sum_{i=1}^N m_i(\ms)^6\right]}+\E \Big[\tilde{\mathbf q}_N^\top H_N\ms\Big]^2\\
		&\lesssim 1+ \lVert H_N\rVert_\text{op}^2\Big[\varepsilon_N+N\E( \mm^2)\Big].
		\end{align*}
		where the last bound uses  \eqref{eq:2-6'} with ${\bf h}=\tilde{\mathbf q}_N$. Since $N\E(\mm^2)\lesssim N\E(\bar{\ms}^2)+1\lesssim \sqrt{N}\mu_N$,
		using the last bound along with \eqref{eq:contrcritrev1} gives \[\E(\mc^{\top}\tilde{A}_N^L\ms)^2\lesssim \mu_N\bigg(N^{-1/2}\Big[\sum_{i=1}^Nc_i\Big]^2+\sum_{i=1}^Nc_i^2 \Big),\]
		thus verifying \eqref{eq:claim_recur}, and hence completing the proof of the lemma.
	\end{enumerate}
\end{proof}
\begin{remark}
	As in the proofs of part (b) of Lemmas~\ref{lem:unifbd}~and~\ref{lem:linfbdgen}, the above argument can be modified to bound the moments of general linear combinations $\sum_{i=1}^Nc_i\sigma_i$ for any ${\mathbf c}\in \R^N$.
\end{remark}

\section{Supplementary lemmas and proofs}\label{sec:supp}
\subsection{Proof of~\cref{lem:tailsubG} and~\cref{lem:gentail}}
\begin{proof}[Proof of~\cref{lem:tailsubG}]
	Noting the presence of $ \mathrm{Tr}^+(D_N)$ in the RHS of the bound, it suffices to prove the result for $D_N$ with all diagonal entries set to $0$. %$\mz^{\top}:= 
	Let $(Z_1,Z_2,\ldots ,Z_N)$ be i.i.d. $\mathcal{N}(0,1)$ random variables. We claim that %Set $\mtx^{(0)}:= (\tilde{X}_1,\ldots ,\tilde{X}_N)$, $T_N:=\sum_{i,j=1}^ND_N(i,j)\tilde{X}_i\tilde{X}_j$ and $S_n=\sum_{i,j=1}^ND_N(i,j)Z_iZ_j$
	\begin{small}
	\begin{equation}\label{eq:keysteptailsubG}
	\mme\left[\exp\left(\frac{1}{2}\sum_{i,j=1}^ND_N(i,j)\tilde{X}_i\tilde{X}_j+\sum_{i=1}^N c_i\tilde{X}_i\right)\right]\le \mme\left[\exp\left(\frac{s_\mu}{2}\sum_{i,j=1}^ND_N(i,j)Z_iZ_j+\sqrt{s_\mu}\sum_{i=1}^N c_iZ_i\right)\right].
	%\mme\left[\exp\left(\frac{1}{2}T_n+\lambda_n\mc^{\top}\mtx^{(0)} \right)\right] \leq \mme\left[\exp\left(\frac{s_{\mu}}{2}S_n+\lambda_n\sqrt{s_{\mu}}\mc^{\top}\mz\right)\right].
	\end{equation}
	\end{small}
	Indeed, to see this, recall that the sub-Gaussian norm of $\tilde{X}_i$ is given by $s_{\mu}$ for $1\leq i\leq n$, (see e.g.,~\cite[Theorem 2.1]{ostrovsky2014exact}). Consequently, for every $\theta\in\R$ we have
	%\begin{equation}\label{eq:subG}
	$\mme\left[\exp\left(\theta\tilde{X}_i\right)\right]\leq \mme\left[\exp\left(\theta \sqrt{s_{\mu}} Z_i\right)\right].$
	%\end{equation}
	Using this,~\eqref{eq:keysteptailsubG} can be obtained by inductively replacing each $\tilde{X}_i$ on the left hand side of~\eqref{eq:keysteptailsubG} with $\sqrt{s_{\mu}}Z_i$.% using~\eqref{eq:subG}.
	 The RHS of \eqref{eq:keysteptailsubG} can be computed directly to get
	%\noindent Note that $s_{\mu}S_n=\sum_{i,j,i\neq j} s_{\mu}D_n(i,j)Z_iZ_j$ is a quadratic form of standard i.i.d. Gaussian random variables. By a simple change of measure, it is easy to see that,
	\begin{align*}
	&\;\;\;\;\log\left\{\mme\left[\exp\left(\frac{1}{2}\sum_{i,j=1}^Ns_{\mu}D_N(i,j)Z_iZ_j+\sqrt{s_{\mu}}\sum_{i=1}^N c_iZ_i\right)\right]\right\}\\ &= -(1/2)\log{\mathrm{det}(I_N-s_{\mu}D_N)}+(1/2)s_{\mu}\sum_{i=1}^N c_i^2,
	\end{align*}
	from which the desired bound follows on noting the existence of  $\rho \in (s_\mu\limsup_{N\rightarrow\infty}\lambda_1(D_N),1)$, and using the bound $-\log(1-x)\lesssim x$ for $x\in [0,1-\rho]$.
\end{proof}

\begin{proof}[Proof of~\cref{lem:gentail}]
	By H\"{o}lder's inequality, for any $p>0$ the left hand side of~\eqref{eq:gentailmain} can be bounded by
	\begin{align*}%\label{eq:errbound5}
	\left(\mathbb{E}^{CW}\left[\exp\left(\frac{\beta(1+p)}{2}\boldsymbol{\sigma}^{\top}\BN\boldsymbol{\sigma}\right)\right]\right)^{1/(1+p)}\P(|\tilde{W}_N-M(\ms)|\geq\varepsilon)^{\frac{p}{1+p}}.
	\end{align*}
	Since $\limsup_{N\rightarrow\infty}\frac{1}{N}\log \P(\tilde{W}_N-M(\ms)|>\varepsilon)<0$ by part (b) of~\cref{prop:CWresmain}, it suffices to show the existence of $p>0$ such that
	\begin{align}\label{eq:suffice_covid}
	\limsup\limits_{N\to\infty}\frac{1}{N}\log \mathbb{E}^{CW}\left[\exp\left(\frac{\beta(1+p)}{2}\boldsymbol{\sigma}^{\top}\BN\boldsymbol{\sigma}\right)\right]\leq 0.
	\end{align}
	To this effect, setting $$g_p(\ms):=\frac{\beta}{2}\boldsymbol{\sigma}^{\top}A_N\boldsymbol{\sigma}+\frac{\beta p}{2}\boldsymbol{\sigma}^{\top}\BN\boldsymbol{\sigma}$$ note that
	\begin{align}\label{eq:errbound6}
	\log \mathbb{E}^{CW}\left[\exp\left(\frac{\beta(1+p)}{2}\boldsymbol{\sigma}^{\top}\BN\boldsymbol{\sigma}\right)\right]=%&\log \bigg[\frac{\sum\limits_{\boldsymbol{\sigma}\in \{-1,1\}^N}e^{g_p(\ms)}}{Z_N^{CW}(\beta,B)}\bigg]
	\sup_{\ms\in [-1,1]^N}\{g_p(\ms)-\sum_{i=1}^NI(\sigma_i)\}-\log Z_N^{CW}(\beta,B)+o(N),
	\end{align}
	where the last line uses \cite[Theorem 1.1]{Basak2017} along with the observation  $\mathrm{Tr}((A_N+\BN)^2)=o(N)$.
	Using spectral theorem we have $A_N=\sum_{i=1}^N\lambda_i\mathbf{q}_i\mathbf{q}_i^\top$ with $\lambda_i=\lambda_i(A_N)$, and so
	\begin{align*}
	&\;\;\;\;\;\sup\limits_{\boldsymbol{\sigma}\in [-1,1]^N} \left(g_p(\boldsymbol{\sigma})-\frac{\beta}{2}\sum_{i=1}^N \sigma_i^2\right)\nonumber\\ 
	&{=} \sup\limits_{\boldsymbol{\sigma}\in[-1,1]^N} \Bigg[\frac{\beta}{2}\sum_{i=1}^N (\lambda_i-1)\boldsymbol{\sigma}^\top\mathbf{q}_i\mathbf{q}_i^\top\boldsymbol{\sigma}+\frac{\beta p}{2}\boldsymbol{\sigma}^\top\Bigg(\lambda_1\mathbf{q}_1\mathbf{q}_1^\top-\frac{\mathbf{1}\mathbf{1}^\top}{N}\Bigg)\boldsymbol{\sigma}+\frac{\beta p}{2}\sum_{i=2}^N \lambda_i\boldsymbol{\sigma}^\top\boldsymbol{q}_i\boldsymbol{q}_i^\top\boldsymbol{\sigma}\Bigg]\\
	\lesssim &o(N)+\sum_{i=2}^N (\boldsymbol{\sigma}^\top\boldsymbol{q}_i\boldsymbol{q}_i^\top\boldsymbol{\sigma})\left(-\frac{\beta}{2}(1-\lambda_i)+\frac{\beta p}{2}\lambda_i\right)
	%\overset{(b)}{\lesssim} o\big(N\big) + \sum_{i=2}^N (\boldsymbol{\sigma}'\boldsymbol{q}_i\boldsymbol{q}_i'\boldsymbol{\sigma})\left(-\frac{\beta}{2}(1-\lambda_i)+\frac{\beta p}{2}\lambda_i\right)\overset{(c)}{\lesssim} o(N).
	\end{align*}
	%\textit{(a)} is an algebraic identity which follows from expressing $I_N$ as $\sum_{i=1}^N \mathbf{q}_i\mathbf{q}_i'$ and \textit{(b)} follows
	where the bound in the last line uses \eqref{eq:A2}, and ~\cref{lem:maxeigvec}. Finally note that \eqref{eq:well_connect} shows the existence of $\rho<1$ such that $\max_{2\le i\le N}\lambda_i\le \rho$, and so there exists $p=p(\rho)$ such that $\max_{2\le i\le N}\left(-\frac{\beta}{2}(1-\lambda_i)+\frac{\beta p}{2}\lambda_i\right)\le 0$. Combining we have $$\sup\limits_{\boldsymbol{\sigma}\in [-1,1]^N} \left(g_p(\boldsymbol{\sigma})-\frac{\beta}{2}\sum_{i=1}^N \sigma_i^2\right)\le o(N),$$ and so 
	%For \textit{(c)}, note that, by~\eqref{eq:well_connect}~there exists $\eta>0$ such that for all large enough $N$, $1-\lambda_i\geq \eta$ for all $i\geq 2$. By choosing $p=(1/2)(\eta/(1-\eta))$, simple calculations yield $-\beta(1-\lambda_i)/2+\beta p\lambda_i/2\leq -\beta \eta/4$ which implies \textit{(c)}.\newline
	%Therefore, the above display implies,
	\begin{align*}
	\sup\limits_{\boldsymbol{\sigma}\in [-1,1]^N}(g_p(\boldsymbol{\sigma})-I(\boldsymbol{\sigma}))&\leq \sup\limits_{\boldsymbol{\sigma}\in [-1,1]^N} \left(g_p(\boldsymbol{\sigma})-\frac{\beta}{2}\sum_{i=1}^N \sigma_i^2\right)+\sup\limits_{\boldsymbol{\sigma}\in [-1,1]^N} \left(\frac{\beta}{2}\sum_{i=1}^N \sigma_i^2-I(\boldsymbol{\sigma})\right)\\ &=o(N)+\mathcal{M}_N(\beta,B),
	\end{align*}
	where $\mathcal{M}_N(\beta,B)$ is the Mean-Field prediction defined in \eqref{eq:mf_2}. Since $|\log Z_N^{CW}(\beta,B)-\mathcal{M}_N(\beta,B)|\lesssim \log N$ by part (a) of~\cref{prop:CWresmain}, \eqref{eq:suffice_covid} follows, thus completing the proof of the lemma.
	%Plugging in the above estimates from into~\eqref{eq:errbound5} and~\eqref{eq:errbound6}, the conclusion follows if $\delta>0$, $p>0$ and $\theta>0$ are chosen small enough. 
\end{proof}
\subsection{Some results on matrices}\label{sec:matlem}
\begin{lemma}\label{lem:maxeigvec}
	Let $\sum_{i=1}^N\lambda_i(A_N)\mathbf{q}_i\mathbf{q}_i^{\top}$  be the spectral decomposition  of $A_N$. Suppose that~\eqref{eq:A2} and \eqref{eq:well_connect} hold, and $\sum_{i=1}^N (R_i-1)=o(N)$. 
	\begin{enumerate}
		\item[(a)]
		Then $\lVert {\mathbf q}_1-\mathbf{e}\rVert_2=o(1)$, where $\mathbf{e}:= N^{-1/2}\vo$.%=\sum_{i=1}^N c_i\mathbf{p}_i$.
		
		\item[(b)]
		Further we have $\limsup\limits_{N\to\infty}\lambda_1(\BN)<1$, where $\BN=A_N-\frac{1}{N}{\bf 1}{\bf 1}^{\top}$. \end{enumerate}
\end{lemma}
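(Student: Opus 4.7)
The plan is to use the Rayleigh quotient of $\mathbf{e}$ against $A_N$ to show that $\mathbf{e}$ is almost an eigenvector for $\lambda_1(A_N)$, and then deduce both parts.

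For part (a), I would expand $\mathbf{e}$ in the orthonormal eigenbasis as $\mathbf{e}=\sum_{i=1}^N \alpha_i\mathbf{q}_i$, with $\alpha_i:=\langle\mathbf{e},\mathbf{q}_i\rangle$ and $\sum_i\alpha_i^2=1$. The key identity is
\[\mathbf{e}^\top A_N\mathbf{e}=\frac{1}{N}\sum_{i=1}^N R_i=1+o(1),\]
using the assumption $\sum_{i=1}^N(R_i-1)=o(N)$. On the other hand, $\mathbf{e}^\top A_N\mathbf{e}=\sum_i\lambda_i(A_N)\alpha_i^2$. Using \eqref{eq:A2} to write $\lambda_1(A_N)=1+o(1)$, together with the fact that for $i\ge 2$ the positive part of $\lambda_i(A_N)$ is at most $\lambda_2(A_N)\le \rho\,\lambda_1(A_N)$ for some $\rho<1$ eventually (by \eqref{eq:well_connect}), I can bound
\[\sum_{i\ge 2}\lambda_i(A_N)\alpha_i^2\le \rho\,\lambda_1(A_N)(1-\alpha_1^2)=\rho(1-\alpha_1^2)+o(1).\]
Combining these gives $(1-\rho)(1-\alpha_1^2)\le o(1)$, forcing $\alpha_1^2\to 1$. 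After choosing the sign of $\mathbf{q}_1$ so that $\alpha_1\ge 0$, one gets $\alpha_1\to 1$, and hence $\|\mathbf{e}-\mathbf{q}_1\|_2^2=2(1-\alpha_1)=o(1)$.

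For part (b), since $B_N=A_N-\mathbf{e}\mathbf{e}^\top$, for any unit vector $v$ I would decompose $v=\beta\mathbf{q}_1+w$ with $w\perp\mathbf{q}_1$ and $\beta^2+\|w\|_2^2=1$, and estimate
\[v^\top B_N v=v^\top A_N v-(v^\top\mathbf{e})^2\le \lambda_1(A_N)\beta^2+\lambda_2(A_N)^+\|w\|_2^2-(v^\top\mathbf{e})^2.\]
Writing $\mathbf{e}=\alpha_1\mathbf{q}_1+\mathbf{e}^\perp$ with $\|\mathbf{e}^\perp\|_2=\sqrt{1-\alpha_1^2}=o(1)$ by part (a), I get $\langle w,\mathbf{e}\rangle=\langle w,\mathbf{e}^\perp\rangle$, whence by Cauchy--Schwarz and $2|\beta|\|w\|_2\le 1$,
\[(v^\top\mathbf{e})^2=\beta^2\alpha_1^2+2\beta\alpha_1\langle w,\mathbf{e}^\perp\rangle+\langle w,\mathbf{e}^\perp\rangle^2\ge \beta^2\alpha_1^2-o(1).\]
The leading terms then combine to give $\beta^2(\lambda_1(A_N)-\alpha_1^2)=o(1)$, while $\lambda_2(A_N)^+\|w\|_2^2\le \rho+o(1)$ with $\rho<1$. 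Therefore $v^\top B_N v\le \rho+o(1)$ uniformly in $v$, which yields $\limsup_{N\to\infty}\lambda_1(B_N)\le \rho<1$.

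The only subtle point is that $\lambda_2(A_N)$ could a priori be negative, in which case the bound $\sum_{i\ge 2}\lambda_i\alpha_i^2\le \rho\lambda_1(1-\alpha_1^2)$ requires replacing $\lambda_2$ by its positive part, but this is harmless since the well-connectedness hypothesis directly controls $\lambda_2(A_N)^+$. Both parts reduce to a single eigenvalue-pinching argument driven by the identity $\mathbf{e}^\top A_N\mathbf{e}=N^{-1}\sum_i R_i\to 1$.
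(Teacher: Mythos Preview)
Your proof is correct and follows essentially the same approach as the paper: part (a) is identical (expand $\mathbf{e}$ in the eigenbasis, compute $\mathbf{e}^\top A_N\mathbf{e}=N^{-1}\sum_i R_i\to 1$, and use the spectral gap to force $\alpha_1^2\to 1$; the paper invokes Perron--Frobenius to fix the sign where you simply choose it). For part (b) the paper takes the slightly shorter route of splitting $\mathcal{A}_N=\sum_{i\ge 2}\lambda_i\mathbf{q}_i\mathbf{q}_i^\top+(\lambda_1\mathbf{q}_1\mathbf{q}_1^\top-\mathbf{e}\mathbf{e}^\top)$ and bounding each piece in operator norm, whereas you bound the Rayleigh quotient directly---both arguments are equivalent and yield $\limsup_N\lambda_1(\mathcal{A}_N)\le \lambda_2(A_N)+o(1)$.
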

\begin{proof}
	\begin{enumerate}
		\item[(a)]
		Write ${\bf e}=\sum_{i=1}^Nc_i{\bf q}_i$ with $c_1>0$ by Perron-Frobenius Theorem, and note that
		\[1+o(1)=\frac{1}{N}\sum_{i=1}^NR_i={\bf e}^\top A_N{\bf e}=\sum_{i=1}^Nc_i^2\lambda_i(A_N)\le \lambda_1(A_N)c_1^2+\lambda_2(A_N)(1-c_1^2)\]
		Along with \eqref{eq:A2} and \eqref{eq:well_connect}, this gives $c_1^2=1+o(1)$, and so $\langle {\bf q}_1,{\bf e}\rangle=c_1=1+o(1)$, thus completing the proof of part (a).
		
		\item[(b)]
		This follows on using part (a) to note that
		\[\lVert \BN\rVert_2\le \lVert\sum\limits_{i=2}^N\lambda_i(A_N){\bf q}_i{\bf q}_i^{\top}\rVert_2+\lVert \lambda_1(A_N){\bf q_1}{\bf q_1}^{\top}-{\bf e}{\bf e}^\top\rVert_2\le \lambda_2(A_N)+o(1),\]
		and using \eqref{eq:well_connect}.
	\end{enumerate}
\end{proof}

\begin{lemma}\label{lem:tracebd}
	Let $\Gamma_N$ be an $N\times N$ symmetric matrix with non-negative entries, such that $\mathbf{1}^{\top}\Gamma_N=\mathbf{1}^{\top}$ and $\Gamma_N$ satisfies \eqref{eq:well_connect}. Then the following conclusions hold: 
	\begin{enumerate}
		\item[(a)] There exists $c>0$ such that for all $\ell\ge 1$ and $N$ large we have \[ \max_{1\leq i \leq N} \Gamma_N^{\ell}(i,i)\leq \frac{2}{N}+\frac{2}{e^{c\ell}}.\]
		\item[(b)] There exists $\delta>0$ such that for all $N$ large enough we have \[\max_{2\le i\le N-1}|\lambda_i(\Gamma_N)|\le 1-\frac{\delta}{\log N}.\]% where $(\lambda_1,\lambda_2,\cdots,\lambda_N)$ are the eigenvalues of $\Gamma_N$ arranged in decreasing order.
	\end{enumerate}
\end{lemma}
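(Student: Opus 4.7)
My plan is to prove part (b) first and then derive part (a) from it. Throughout, use the spectral decomposition $\Gamma_N = \sum_{j=1}^N \lambda_j \mathbf{q}_j \mathbf{q}_j^\top$ with $\lambda_1 \geq \cdots \geq \lambda_N$ and orthonormal $\{\mathbf{q}_j\}$. Since $\Gamma_N$ is symmetric doubly stochastic with non-negative entries, Perron--Frobenius gives $\lambda_1 = 1$ with $\mathbf{q}_1 = \mathbf{e} := \mathbf{1}/\sqrt{N}$ (hence $q_1(i)^2 = 1/N$), and all $\lambda_j \in [-1,1]$; the well-connectedness assumption \eqref{eq:well_connect} yields a fixed $\rho < 1$ with $\lambda_2(\Gamma_N) \leq \rho$ for all $N$ large. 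The main subtlety in both parts is handling the most negative eigenvalue $\lambda_N$, which can approach $-1$ when $\Gamma_N$ approaches a bipartite structure.

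For part (b), the upper bound $\lambda_2 \leq 1 - \delta/\log N$ is immediate from well-connectedness (in fact with constant $\delta$). The hard direction is the lower bound $\lambda_{N-1} \geq -1 + \delta/\log N$. Qualitatively, $\lambda_N(\Gamma_N) = -1$ forces $\Gamma_N$ to be bipartite, and under well-connectedness this eigenvalue is simple, so $\lambda_{N-1} > -1$ strictly. To quantify this to a $1/\log N$ rate, I would argue by contradiction: if both $|\lambda_{N-1}|$ and $|\lambda_N|$ exceeded $1 - \delta/\log N$, the two-dimensional near-eigenspace at $-1$ (orthogonal to $\mathbf{e}$), combined with a Cheeger-type isoperimetric inequality applied to $I + \Gamma_N$, would produce two approximately disjoint, approximately $\Gamma_N^2$-invariant subsets of vertices, contradicting the constant spectral gap of $\Gamma_N^2$ around eigenvalue $1$ that is inherited from $\lambda_2(\Gamma_N) \leq \rho$. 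The $\log N$ factor enters via the standard conductance-to-eigenvalue conversion.

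For part (a), the expansion
\[
\Gamma_N^\ell(i,i) = \frac{1}{N} + \sum_{j=2}^{N-1} \lambda_j^\ell q_j(i)^2 + \lambda_N^\ell q_N(i)^2
\]
separates the dominant contributions. The middle sum is bounded using part (b) and $\sum_{j=2}^{N-1} q_j(i)^2 \leq 1$ by $(1 - \delta/\log N)^\ell$. For the $\lambda_N$ term, I would establish a near-bipartite stability estimate $q_N(i)^2 \leq 1/N + O(\sqrt{1-|\lambda_N|})$: writing $\mathbf{v} = \mathbf{q}_N$ and $\alpha := \langle |\mathbf{v}|, \mathbf{e}\rangle$, the non-negativity of $\Gamma_N$ yields $|\mathbf{v}|^\top \Gamma_N |\mathbf{v}| \geq |\mathbf{v}^\top \Gamma_N \mathbf{v}| = |\lambda_N|$, while the Rayleigh quotient bound gives $|\mathbf{v}|^\top \Gamma_N |\mathbf{v}| \leq \alpha^2 + \rho(1 - \alpha^2)$, forcing $1 - \alpha^2 \leq (1 - |\lambda_N|)/(1-\rho)$; hence $|\mathbf{v}|$ is $\ell^2$-close to $\alpha\mathbf{e}$, giving the coordinatewise bound. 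Splitting into cases based on whether $1 - |\lambda_N|$ is bounded below by a constant (in which case $\lambda_N^\ell \leq e^{-c\ell}$ directly does the work) or small (in which case the stability estimate contributes the extra $1/N$), and optimizing, yields the claimed bound $2/N + 2/e^{c\ell}$.

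The main obstacle will be the quantitative Cheeger-type argument in part (b) needed to produce the $1/\log N$ rate; the near-bipartite stability in part (a) follows from standard Rayleigh-quotient manipulations once the constant gap $\lambda_2 \leq \rho < 1$ is in hand.
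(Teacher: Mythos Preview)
Your overall strategy has a genuine gap: you propose to prove (b) first and then derive (a), but the bound in (b) is too weak to yield (a). If all you know is $|\lambda_j|\le 1-\delta/\log N$ for $2\le j\le N-1$, then your ``middle sum'' $\sum_{j=2}^{N-1}\lambda_j^\ell q_j(i)^2$ is only bounded by $(1-\delta/\log N)^\ell\approx e^{-\delta\ell/\log N}$, not by $e^{-c\ell}$ for a constant $c>0$ independent of $N$. In particular, at $\ell\sim D\log N$ (which is exactly where the lemma is applied later), your bound is the constant $e^{-\delta D}$, whereas (a) asserts $O(1/N)$. Your near-bipartite stability estimate only treats the single eigenvector $\mathbf{q}_N$; it does not help with the potentially many negative eigenvalues sitting near $-(1-\delta/\log N)$.

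The paper proceeds in the opposite order, and the key step you are missing is elementary: since $\Gamma_N$ has nonnegative entries, $\Gamma_N^\ell(i,i)\ge 0$ for every $\ell$. For odd $\ell$, the spectral expansion reads
\[
\Gamma_N^\ell(i,i)=\frac{1}{N}+\sum_{j\ge 2:\,\lambda_j>0}|\lambda_j|^\ell q_j(i)^2-\sum_{j:\,\lambda_j<0}|\lambda_j|^\ell q_j(i)^2,
\]
so nonnegativity forces $\sum_{\lambda_j<0}|\lambda_j|^\ell q_j(i)^2\le \frac{1}{N}+\lambda_2^\ell\le \frac{1}{N}+e^{-c\ell}$. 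This single line controls \emph{all} negative eigenvalues at once, with no Cheeger input or stability estimate; the even-$\ell$ case follows from the odd $\ell-1$ case since $|\lambda_j|\le 1$. Part (b) is then a short trace argument: taking $\ell=\tfrac{2}{c}\log N$ even, $\sum_i|\lambda_i|^\ell=\mathrm{Tr}(\Gamma_N^\ell)\le 2+o(1)$ by (a), while if three eigenvalues had $|\lambda_i|>1-\delta/\log N$, the trace would exceed $3e^{-2\delta/c}>2$ for $\delta$ small, a contradiction.

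A smaller issue in your sketch of (b): the assertion that $\Gamma_N^2$ inherits a ``constant spectral gap around $1$'' from $\lambda_2(\Gamma_N)\le\rho$ is false. The second-largest eigenvalue of $\Gamma_N^2$ is $\max(\lambda_2^2,\lambda_N^2)$, which is close to $1$ precisely in the regime you are analyzing.
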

\begin{proof}%[Proof of~\cref{lem:tracebd}, part (a)]
	\begin{enumerate}
		\item[(a)]
		Setting $\lambda_i:=\lambda_i(\Gamma_N)$ for simplicity of notation, let $\mathcal{J}_+:=\{j\in [2,N]:\lambda_j>0\}$ and $\mathcal{J}_-:=\{j\in [2,N]:\lambda_j<0\}$, and use spectral theorem to note that for any positive integer $\ell$ we have \[\Gamma_N^{\ell}=\frac{1}{N}{\bf 1}{\bf 1}^\top+\sum_{j\in \mathcal{J}_+}|\lambda_j|^{\ell} {\bf q}_j{\bf q}_j^\top+(-1)^{\ell}\sum_{j\in \mathcal{J}_-}|\lambda_j|^{\ell} {\bf q}_j{\bf q}_j^\top,\]
		where $({\bf q}_1,\cdots,{\bf q}_N)$ are the eigenvectors of $\Gamma_N$. To begin, use \eqref{eq:well_connect} to note the existence of $c>0$ such that for all $N$ large enough we have $\lambda_2\le e^{-c}$, which gives
		\begin{align}\label{eq:wc1}
		\sum_{j\in \mathcal{J}_+}|\lambda_j|^{\ell}q_{ij}^2\le \lambda_2^{\ell}\le e^{-c\ell},
		\end{align}
		where $q_{ij}$ denotes the $i$-th entry of the vector ${\bf q}_j$.
		
		For $\ell$ odd, noting that $\Gamma_N^\ell(i,i)\ge 0$ gives
		\begin{align*}
		\sum_{j\in \mathcal{J}_-}|\lambda_j|^{\ell}q_{ij}^2\le \frac{1}{N}+\sum_{j\in \mathcal{J}_+}|\lambda_j|^{\ell}q_{ij}^2\le \frac{1}{N}+\lambda_2^{\ell}\le \frac{1}{N}+e^{-c\ell},
		\end{align*}
		where the last inequality uses \eqref{eq:wc1}. Using the fact that $\max_{2\le i\le N}|\lambda_i|\le 1$, for $\ell\ge 2$ we have
		\begin{align*}
		\sum_{j\in \mathcal{J}_-}|\lambda_j|^{\ell}q_{ij}^2\le \sum_{j\in \mathcal{J}_-}|\lambda_j|^{\ell-1}q_{ij}^2\le \frac{1}{N}+e^{-c\ell}.
		\end{align*}
		Combining these two bounds, for all $\ell\ge 1$ we have
		\[|\Gamma_N^\ell(i,i)|\le \frac{1}{N}+\sum_{j\in \mathcal{J}_+}|\lambda_j|^{\ell}q_{ij}^2+\sum_{j\in \mathcal{J}_-}|\lambda_j|^{\ell}q_{ij}^2\le \frac{2}{N}+\frac{2}{e^{c\ell}},\]
		thus completing the proof of part (a).	
		\item[(b)]
		Let $\delta>0$ be such that $3e^{-2\delta/c}>2$. Using part (a) with $\ell=\frac{2\log N}{c}$ and even, we have
		\begin{align*}
		\sum_{i=1}^{N} |\lambda_i|^\ell=\sum_{i=1}^N \Gamma_N^{\ell}(i,i)\leq 2+2N e^{-2\log N}\rightarrow 2.
		\end{align*}
		On the other hand if $\max_{2\le i\le N-1}|\lambda_i|>1-\frac{\delta}{\log N}$, then \[\sum_{i=1}^N|\lambda_i|^{\ell}\ge 3 \Big(1-\frac{\delta}{\log N}\Big)^{\frac{2\log N}{c}}\rightarrow 3e^{-\frac{2\delta }{c}}.\]
		These two together imply $3e^{-2\delta/c}\le 2$, a contradiction.
		%	 
		%	Thus choosing $\delta$ such that $3e^{-\delta D}<2$ gives a con
		%	Consider $E_N$ as defined in the statement with $D_2>0$ to be chosen later. By choosing $k_N$ as an even number in the above display, we get:
		%	\begin{align*}
		%	2\geq \limsup\limits_{N\to\infty}\sum_{i=1}^{N} \lambda_i^{k_N}(M_N)\geq \limsup\limits_{N\to\infty} \left(1+\exp(-2\delta D_1)(|E_N|-1)\right).
		%	\end{align*}
		%	Finally, by choosing $\delta>0$ small enough, the conclusion follows.
	\end{enumerate}
\end{proof}

\begin{remark}\label{rem:mineigen}
	Note that if $\Gamma_N$ is the adjacency matrix of a $d_N$ regular bipartite graph scaled by the degree $d_N$, which satisfies the spectral gap condition, see~\eqref{eq:well_connect}), then our lemma implies%$M_N=\mathbf{1}\mathbf{1}^{\top}/N+\mathbf{q}\mathbf{q}^{\top}$ where $q_N$ is a vector of length $N$ with $1/\sqrt{N}$ in the first $N/2$ coordinates and $-1/\sqrt{N}$ in the remaining ones. Then 
	\[\lim_{N\rightarrow\infty}\max_{1\le i\le N}\Big|N\Gamma_N^{2\ell}(i,i)-2\Big|=0\] for $\ell =D\log{N}$ with $D$ large enough. This highlights the asymptotic optimality of the bound obtained in part (a) of~\cref{lem:tracebd}. Part (b) quantifies the graph theoretic fact that for a connected $d_N$ regular graph, say $G_N$, the multiplicity of the eigenvalue $-d_N$ can be at most $1$.  It is easy to check that if $-d_N$ happens to be an eigenvalue the graph must be a bipartite graph, and all other eigenvalues will be strictly larger than $-d_N$ (i.e. there is a unique bipartition for a connected bipartite graph). In fact, our proof can be modified to show the stronger conclusion that for a $d_N$ regular bipartite graphs satisfying the spectral gap condition, the second last eigenvalue is bounded away from $-1$, i.e. \[\liminf_{N\rightarrow\infty}\frac{\lambda_{N-1}(G_N)}{d_N}>-1.\]
	
	%Note that~\cref{lem:tracebd} generalizes the graph theoretic fact that a connected regular bipartite graph admits a unique bipartition, which is equivalent to the statement that there can be at most two eigenvalues of a connected
	%\end{remark}
	%\begin{remark}[Optimality of~\cref{lem:tracebd}]
\end{remark}
\begin{comment}
\begin{lemma}\label{lem:linfop}
Assume that $A_N$ satisfies assumptions~\ta~and~\thh. Define $H_N:= \beta(1-\ut^2)A_N$. Then $\limsup_{l\to\infty} \limsup_{N\to\infty}\lVert H_N^l\rVert_{\infty}=0$.
\end{lemma}
\begin{prop}\label{prop:eigen}
Suppose $A_N$ satisfies assumptions~\ta~and~\thh, then $\lim_{N\to\infty} \lambda_1(A_N)\to 1$ and $\liminf_{N\to\infty} \lambda_N(A_N)\geq -1$.. 
\end{prop}	
\end{comment}
\subsection{Some results for the Curie-Weiss model}\label{sec:addresCW}
The following proposition collects all the results for the Curie-Weiss model which we have used previously. 
%Its proof follows from~\cref{prop:CWbasic} and straight forward calculus, and is omitted.

\begin{prop}\label{prop:CWresmain}
	Suppose $\ms$ is drawn from the Curie-Weiss model. With $\tw_N$ as in~\cref{prop:CWbasic}, the following conclusions hold:
	
	\begin{enumerate}
		\item[(a)] 
		\begin{eqnarray*}
			\log{Z_N^{CW}(\beta,B)}-N\Big\{\frac{\beta }{2}t^2+Bt-I(t)\Big\}\lesssim &1&\text{ if }(\beta,B)\in \Theta_1\cup \Theta_2,\\
			\lesssim &\log N&\text{ if }(\beta,B)\in \Theta_3.
		\end{eqnarray*}
		\item[(b)] For any $\lambda>0$, we have
		\begin{eqnarray*}
			\log{\mmp^{CW}(|\tw_N-M(\ms)|\geq \lambda)}\lesssim &-N\lambda^2&\text{ if }(\beta,B)\in \Theta_1\cup \Theta_2,\\
			\lesssim& -N\min(\lambda^2,\lambda^4)&\text{ if }(\beta,B)\in \Theta_3.
		\end{eqnarray*}
		Consequently for any sequence $\delta_N=o(N)$ we have
		\begin{eqnarray*}
			\log \E^{CW} e^{\delta_N(\tw_N-M(\ms))^2}\lesssim &1&\text{ if }(\beta,B)\in \Theta_1\cup \Theta_2,\\
			\lesssim &\frac{\delta_N^2}{N}&\text{ if }(\beta,B)\in \Theta_3.
		\end{eqnarray*}
		
		\item[(c)] For $(\beta,B)\in\Theta_2$, we have:
		$$\limsup\limits_{N\to\infty}\frac{1}{N}\log{\mmp^{CW}\left(\sum_{i=1}^N \sigma_i\in \{-2,-1,0,1,2\}\right)}< 0.$$
	\end{enumerate}
\end{prop}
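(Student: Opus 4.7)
$\textbf{Plan.}$ All three parts follow from the Hubbard--Stratonovich representation of Proposition~\ref{prop:CWbasic}: $W_N$ has density proportional to $e^{-Nf(w)}$ with $f(w):=\beta w^2/2-\log\cosh(\beta w+B)$, the $\sigma_i$'s are conditionally i.i.d.\ with mean $\tw_N=\tanh(\beta W_N+B)$, and integrating them out gives
\begin{equation*}
Z_N^{CW}(\beta,B)=2^N\sqrt{N\beta/(2\pi)}\int_{-\infty}^\infty e^{-Nf(w)}\,dw.
\end{equation*}
Using $1\pm t=e^{\pm(\beta t+B)}\operatorname{sech}(\beta t+B)$ at the stationary point $t=\tanh(\beta t+B)$, one verifies the algebraic identity $N[\beta t^2/2+Bt-I(t)]=N\log 2-Nf(t)$, so every assertion reduces to a Laplace estimate for this integral and for tails of the density of $W_N$. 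Recall that in $\Theta_1\cup\Theta_2$ the minima of $f$ are non-degenerate (quadratic), while in $\Theta_3$ one has $t=0$ and $f(w)=w^4/12+O(w^6)$, a degenerate quartic minimum.

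$\textbf{Part (a).}$ In $\Theta_1$, $f$ has a unique minimum at $t$ with $f''(t)>0$; in $\Theta_2$, it has exactly two non-degenerate minima at $\pm t$. Standard Laplace's method gives $\int e^{-Nf(w)}\,dw=(1+o(1))\sqrt{2\pi/(Nf''(t))}\,e^{-Nf(t)}$ (doubled in $\Theta_2$), and combining with the $\sqrt{N}$ prefactor yields an $O(1)$ gap between $\log Z_N^{CW}$ and $N\log 2-Nf(t)=N[\beta t^2/2+Bt-I(t)]$. In $\Theta_3$, rescaling by $u=N^{1/4}w$ shows $\int e^{-Nf(w)}\,dw\asymp N^{-1/4}\int e^{-u^4/12}\,du$, so the mismatch between the $\sqrt{N}$ prefactor and the quartic $N^{-1/4}$ produces an $O(\log N)$ gap.

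$\textbf{Part (b).}$ Since $\tw_N$ is a monotone transform of $W_N$ and $M(\ms)\in\{-t,t\}$ depends only on $\sign(\oms)$, the event $\{|\tw_N-M(\ms)|\geq\lambda\}$ splits into two pieces whose probability is a ratio $\int_{|w-t^*|\geq c\lambda}e^{-Nf(w)}\,dw/\int e^{-Nf(w)}\,dw$ around each minimizer $t^*$. In $\Theta_1\cup\Theta_2$, the quadratic lower bound $f(w)-f(t^*)\gtrsim(w-t^*)^2$ near each minimizer and coercivity of $f$ at infinity yield the $e^{-cN\lambda^2}$ tail. In $\Theta_3$, the global bound $f(w)\gtrsim\min(w^2,w^4)$ yields the $e^{-cN\min(\lambda^2,\lambda^4)}$ tail. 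The moment bound then follows from $\mathbb{E}e^{\delta_N Y}=1+\delta_N\int_0^\infty e^{\delta_N s}\mathbb{P}(Y>s)\,ds$ with $Y=(\tw_N-M(\ms))^2$: the sub-Gaussian tail gives $O(1)$ when $\delta_N=o(N)$ in $\Theta_1\cup\Theta_2$, while in $\Theta_3$ the identity $\delta_N w^2-Nw^4/12=-(N/12)(w^2-6\delta_N/N)^2+3\delta_N^2/N$ (completing the square) delivers the $\delta_N^2/N$ bound.

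$\textbf{Part (c).}$ In $\Theta_2$ with $B=0$ and $t>0$, the event $\{\sum\sigma_i\in\{-1,0,1\}\}$ forces $|\oms|\leq 1/N$, hence $|\oms-M(\ms)|\geq t/2$ for all large $N$. By the triangle inequality, this event is contained in $\{|\tw_N-M(\ms)|\geq t/4\}\cup\{|\oms-\tw_N|\geq t/4\}$; the first piece is exponentially rare by part (b), and the second by Hoeffding's inequality applied to the conditionally i.i.d.\ $\sigma_i$'s given $W_N$ (a bound that is uniform in $W_N$ since the Hoeffding constant depends only on $N$). The principal delicacy across the three parts is the quartic Laplace analysis in $\Theta_3$: obtaining the correct $O(\log N)$ error in (a) and the $\delta_N^2/N$ moment constant in (b) requires careful bookkeeping of the $N^{1/4}$ rescaling and of the error when $\delta_N$ approaches $N$; away from the critical point, everything reduces to a standard, if careful, application of Laplace's method.
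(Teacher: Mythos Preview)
Your proposal is correct and follows essentially the same route as the paper: both use the Hubbard--Stratonovich representation from Proposition~\ref{prop:CWbasic}, reduce everything to Laplace-type estimates on $\int e^{-Nf(w)}\,dw$ via the quadratic expansion $f(w)-f(t)\asymp(w-t)^2$ in $\Theta_1\cup\Theta_2$ versus the quartic $f(w)\asymp\min(w^2,w^4)$ in $\Theta_3$, and derive part (c) from the concentration of $W_N$ near $\pm t$ combined with the conditional i.i.d.\ structure. Your write-up is somewhat more explicit than the paper's (the algebraic identity $N[\beta t^2/2+Bt-I(t)]=N\log 2-Nf(t)$, the completing-the-square for the $\Theta_3$ moment bound, and the Hoeffding step in (c) are all spelled out rather than left implicit), but these are elaborations rather than a different strategy.
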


\begin{proof}
	\begin{enumerate}
		\item[(a)]
		With $f(w)=\frac{\beta w^2}{2}-\log \cosh(\beta w+B)$  as in~\ref{prop:CWbasic}, a direct computation gives
		$
		Z_N^{CW}(\beta,B)=e^{-\beta/2}\sqrt{ \frac{ n\beta}{2\pi}} \int_\R e^{-nf(w)}dw,
		$
		where the function $f(w)$ has a unique global minimum at $w=t$ for $(\beta,B)\in \Theta_1\cup \Theta_3$, and two global minima at $\pm t$ for $(\beta,B)\in \Theta_2$. Also, it is easy to verify that 
		\begin{eqnarray}\label{eq:laplace}
		\begin{split}
		f(w)-f(t)\cong& (w-t)^2,\quad &\text{ for all }w\in \R, \quad \text{ if }(\beta,B)\in \Theta_1,\\
		f(w)-f(t)\cong& (w-t)^2\quad &\text{ for all }w>0, \quad \text{ if }(\beta,B)\in \Theta_2,\\
		f(w)-f(t)\cong& \min\Big[(w-t)^2,(w-t)^4\Big]\quad &\text{ for all }w\in \R,\quad  \text{ if }(\beta,B)\in \Theta_3.
		\end{split}
		\end{eqnarray}
		The desired estimates follow from these bounds and using the Laplace method for approximating integrals.
		
		\item[(b)]
		Noting that \[|\tilde{W}_N-M(\ms)|=|\tanh(\beta W_N+B)-\tanh(\beta M(\ms)+B)|\le \beta |W_N-M(\ms)|,\] it suffices to prove the desired bounds  $W_N$, which follows from straightforward computations on using \eqref{eq:laplace}.
		
		\item[(c)]
		This follows on using part (b) to note that, when $(\beta,B)\in \Theta_2$, the random variable $W_N$ has an exponential concentration near the points $\pm t$, none of which are near $0$.

	\end{enumerate}
\end{proof}
\subsection{Proof of~\eqref{eq:nonuniquetheo6}}\label{sec:revberry}
In this section, we will prove~\eqref{eq:nonuniquetheo6} using~\cite[Theorem 1.2]{Cha2011} and a soft change of measure argument. Throughout this proof, $c>0$ will denote constants free of $N$ that might change from one line to the next.

\begin{proof}
	Define the set $\tilde{\mathcal{J}}:=\{\ms\in\{-1,1\}^N:\ |\sum_{i=1}^N \sigma_i|\geq 3\}$ and $\mathcal{J}:=\{\ms\in\{-1,1\}^N:\ |\sum_{i=1}^N \sigma_i|\geq 4\}$. Recall the definition of $\P$ from~\eqref{eq:model} and note that, by part (c) of~\cref{prop:CWresmain} and part (b) of~\cref{thm:conc}, we get:
	\begin{align}\label{eq:revberry3}
	\limsup_{N\to\infty}\frac{1}{N}\log\P(\ms\in\tilde{\mathcal{J}}^c)\leq \limsup_{N\to\infty}\frac{1}{N} \log\P(\ms\in\mathcal{J}^c)<0.
	\end{align}
	Next, let $\mmq$ denote the probability measure induced by $\P$ conditioned on the event $\ms\in \tilde{\mathcal{J}}$, i.e., $\mmq(\cdot):=\P(\cdot|\ms\in\tilde{\mathcal{J}})$. Therefore, for any $B\subseteq \{-1,1\}^N$, we have
	$$\mmq(\ms\in B)=\frac{\P(\ms\in B\cap \tilde{\mathcal{J}})}{\P(\ms\in\tilde{\mathcal{J}})}.$$
	Once again, by part (c) of~\cref{prop:CWresmain} and part (b) of~\cref{thm:conc}, we get:
	\begin{align}\label{eq:revberry13}
	\limsup_{N\to\infty}\frac{1}{N}\log\mmq(\ms\in\mathcal{J}^c)\leq  \limsup_{N\to\infty}\frac{1}{N} \log\frac{\P(\ms\in\mathcal{J}^c)}{\P(\ms\in\tilde{\mathcal{J}})}<0.
	\end{align}
	Suppose that we draw $\ms_{\P}\sim\P$ and $\ms_{\mmq}\sim\mmq$. Define $\tnq:=\sqrt{N}(\bar{\boldsymbol{\ms_{\mmq}}}-M(\ms_{\mmq}))$. We will write $(\tnp,\tnpp)\equiv (T_N,T_N')$ under the law of $\P$ (recall the construction of $T_N'$ from the proof of Theorems~\ref{theo:uniq1}~and~\ref{theo:nouniq}~in~\cref{sec:pfmaintwo}).  Construct $\tnqp$ similar to $\tnpp$ as follows: Sample $I$ uniformly from the set $\{1,2,\ldots ,N\}$. Given $I=i$, replace $\sigma_{\mmq,i}$ with an independent $\pm 1$ valued random variable $\sigma_{\mmq,i}'$ with mean $\E_{\mmq}[\sigma_{\mmq,i}|(\sigma_{\mmq,j},j\neq i)]$, and set $\ms_{\mmq}':=(\sigma_{\mmq,1},\ldots ,\sigma_{\mmq,i-1},\sigma_{\mmq,i}',\sigma_{\mmq,i+1},\ldots ,\sigma_{\mmq,N})$, $\tnqp:=\sqrt{N}(\bar{\boldsymbol{\ms_{\mmq}'}}-M(\boldsymbol{\ms_{\mmq}'}))$. 
	
	By construction $(\tnqp,\tnq)$ forms an exchangeable pair under $\mmq$. 
	Moreover,
	\begin{align}\label{eq:berryclaim2}\mmq(\ms_{\mmq}'|\ms_{\mmq}=\ms)=\P(\ms_{\P}'|\ms_{\P}=\ms) \quad \mbox{for}\ \ms\in\mathcal{J},\end{align}
	and 
	$$\mmq(|\tnqp-\tnq|\leq 2N^{-1/2})=1,$$
	which follows by observing that $\max_{i=1}^N |\sigma_{\mmq,i}'-\sigma_{\mmq,i}|\leq 2$.
	
	Define $\delta:=2N^{-1/2}$. By using the above display, coupled with~\cite[Theorem 1.2]{Cha2011}, we get:
	\begin{align}\label{eq:berry1}
	\sup_{z\in\R}& \bigg|\mmq(\tnq\leq z)-\P(Z_{\tau}\leq z)\bigg|\lesssim  \E_{\mmq}\big|1-(c_0/2)\E_{\mmq}((\tnq-\tnqp)^2|\tnq)\big|\nonumber\\ & +c_1\max\{(1,c_3)\}\delta+ (c_0/c_1) \E_{\mmq}|r(\tnq)|+\delta^3 c_0\{(2+c_3/2)\E_{\mmq}|c_0 g(\tnq)|+c_1c_3/2\},
	\end{align} 
	where $Z_{\tau}$ is defined as in~\cref{lem:cw_known} for $(\beta,B)\in \Theta_2$, $r(\cdot):=\sum_{a=1}^3 H_a(\cdot)$ with $g(\cdot)$, $\{H_a(\cdot)\}_{a=1,2,3}$ from~\eqref{eq:nonuniquetheo5}.
	
	In the remainder, we will quantify the cost of moving between the probability measures $\P$ and $\mmq$ in \eqref{eq:berry1}. First, we present a claim which will be used to prove~\eqref{eq:nonuniquetheo6}. The proof of this  claim is deferred to the end of the proof.
	
	\emph{Claim:} Given any function $v(\cdot):\{-1,1\}^N\to\R$, such that $\sup_{\ms\in \{-1,1\}^n}|v(\ms)|\le a N^b$ for constants $a,b$ free of $N$, 		\begin{align}\label{eq:berryclaim1}
	\big|\E_{\mmq} v(\ms_{\mmq})-\E_{\P} v(\ms_{\P})\big|\leq \exp(-cN),
	\end{align}
	where $c$ depends only on $a,b$, and the implied constant in \eqref{eq:revberry3}. We will now complete the rest of the proof assuming Claim~\eqref{eq:berryclaim1}. For any $z\in\R$, with $v_z(\ms):=\mathbbm{1}(\sqrt{N}(\bar{\ms}-M(\ms))\leq z)$, note that $\sup_{z\in\R}\sup_{\ms\in\{-1,1\}^N} v_z(\ms)\leq 1$, which by~\eqref{eq:berryclaim1} yields:
	\begin{align}\label{eq:revberry2}
	\sup_{z\in\R}\big|\E_{\mmq} v_z(\ms_{\mmq})-\E_{\P} v_z(\ms_{\P})\big|=\sup_{z\in\R}\big|\mmq(\tnq\leq z)-\P(\tnp\leq z)\big|\leq \exp(-cN).
	\end{align}
	A similar computation as in~\eqref{eq:revberry2} further yields:		
	\begin{align}\label{eq:berry5}
	\max\left\{\E_{\mmq}|r(\tnq)|-\E_{\P}|r(\tnp)|\bigg|,\bigg|\E_{\mmq}|g(\tnq)|-\E_{\P}|g(\tnp)|\right\}\leq \exp(-cN).
	\end{align}
	Next, we will focus on the term $\E_{\mmq}((\tnq-\tnqp)^2|\tnq)$ in \eqref{eq:berry1}. By~\eqref{eq:berryclaim2}, we have $$\E_{\mmq}[1-(c_0/2)(\tnq-\tnqp)^2|\ms_{\mmq}=\ms]=\E_{\P}[1-(c_0/2)(\tnp-\tnpp)^2|\ms_{\P}=\ms]=:u(\ms), $$ for $\ms\in\mathcal{J}$. Therefore, 		
	\begin{align*} \E_{\mmq}\big|\E_{\mmq}[u(\ms)\mathbbm{1}(\ms\in\mathcal{J})|\tnq]\big|&=(\P(\ms\in\mathcal{J}))^{-1}\E_{\mmp}\big|\E_{\mmp}[u(\ms)\mathbbm{1}(\ms\in\mathcal{J})|\tnp]\big|\\ &=\E_{\mmp}\big|\E_{\mmp}[u(\ms)\mathbbm{1}(\ms\in\mathcal{J})|\tnp]\big|+r_n,
	\end{align*}
	where $|r_n|\leq \exp(-cN)$ by~\eqref{eq:revberry3}.
	
	Using the above observation with~\eqref{eq:revberry3}~and~\eqref{eq:revberry13}, we further get:
	\begin{align*}
	&\;\;\;\;\;\bigg|\E_{\mmq}\big|\E_{\mmq}[1-(c_0/2)(\tnq-\tnqp)^2|\tnq]\big|-\E_{\P}\big|\E_{\P}[1-(c_0/2)(\tnp-\tnpp)^2|\tnp]\big|\bigg|\\ &\lesssim \exp(-cN)+N\mmq(\ms\in\mathcal{J}^c)+N\P(\ms\in\mathcal{J}^c)\lesssim \exp(-cN).
	\end{align*}
	Combining the above observation with~\eqref{eq:berry5},~\eqref{eq:revberry2},~and~\eqref{eq:berry1}, completes the proof of~\eqref{eq:nonuniquetheo6}. 
	
	To complete the proof, it remains to prove~\eqref{eq:berryclaim1}, which is done below.
	
	\emph{Proof of Claim~\eqref{eq:berryclaim1}:} Observe that,
	\begin{align*}
	|\E_{\mmq} v(\ms_{\mmq})-\E_{\P} v(\ms_{\P})|&=\bigg|\frac{\E_{\P}[v(\ms_{\P})\mathbbm{1}(\ms_{\P}\in\mathcal{J})]}{\P(\ms_{\P}\in\mathcal{J})}-\E_{\P} v(\ms_{\P})\bigg|\\ &\leq \frac{\E_{\P}[|v(\ms_{\P})|\mathbbm{1}(v(\ms_{\P})\in\mathcal{J})]\P(\ms_{\P}\in\mathcal{J}^c)}{\P(\ms_{\P}\in\mathcal{J})}+\E_{\P}[|v(\ms_{\P})|\mathbbm{1}(\ms_{\P}\in\mathcal{J}^c)]\\ &\lesssim aN^b\P(\ms_{\P}\in\mathcal{J}^c)\le \exp(-cN),
	\end{align*}
	where the last line follows from~\eqref{eq:revberry3}. This establishes Claim~\eqref{eq:berryclaim1}.
\end{proof}

\section*{Acknowledgment}
The authors would like to thank the Editor, the Associate Editor and the two anonymous reviewers for their constructive suggestions that helped improve the presentation of this paper.
\bibliographystyle{imsart-nameyear}
\bibliography{template}

\begin{thebibliography}{38}
% BibTex style file: imsart-nameyear.bst, 2017-11-03
% Default style options (sort=1,type=nameyear).
% Used options (sort=1,type=nameyear).

\bibitem[\protect\citeauthoryear{Adamczak et~al.}{2019}]{Rados2019}
\begin{barticle}[author]
\bauthor{\bsnm{Adamczak},~\bfnm{Rados\l~aw}\binits{R.~a.}},
  \bauthor{\bsnm{Kotowski},~\bfnm{Micha\l}\binits{M.}},
  \bauthor{\bsnm{Polaczyk},~\bfnm{Bart\l~omiej}\binits{B.~o.}} \AND
  \bauthor{\bsnm{Strzelecki},~\bfnm{Micha\l}\binits{M.}}
(\byear{2019}).
\btitle{A note on concentration for polynomials in the {I}sing model}.
\bjournal{Electron. J. Probab.}
\bvolume{24}
\bpages{Paper No. 42, 22}.
\bdoi{10.1214/19-EJP280}
\bmrnumber{3949267}
\end{barticle}
\endbibitem

\bibitem[\protect\citeauthoryear{Augeri}{2021}]{augeri2019transportation}
\begin{barticle}[author]
\bauthor{\bsnm{Augeri},~\bfnm{Fanny}\binits{F.}}
(\byear{2021}).
\btitle{A transportation approach to the mean-field approximation}.
\bjournal{Probab. Theory Related Fields}
\bvolume{180}
\bpages{1--32}.
\bdoi{10.1007/s00440-021-01056-2}
\bmrnumber{4265016}
\end{barticle}
\endbibitem

\bibitem[\protect\citeauthoryear{Bandeira and van Handel}{2016}]{Afonso2016}
\begin{barticle}[author]
\bauthor{\bsnm{Bandeira},~\bfnm{Afonso~S.}\binits{A.~S.}} \AND
  \bauthor{\bparticle{van} \bsnm{Handel},~\bfnm{Ramon}\binits{R.}}
(\byear{2016}).
\btitle{Sharp nonasymptotic bounds on the norm of random matrices with
  independent entries}.
\bjournal{Ann. Probab.}
\bvolume{44}
\bpages{2479--2506}.
\bdoi{10.1214/15-AOP1025}
\bmrnumber{3531673}
\end{barticle}
\endbibitem

\bibitem[\protect\citeauthoryear{Basak and Mukherjee}{2017}]{Basak2017}
\begin{barticle}[author]
\bauthor{\bsnm{Basak},~\bfnm{Anirban}\binits{A.}} \AND
  \bauthor{\bsnm{Mukherjee},~\bfnm{Sumit}\binits{S.}}
(\byear{2017}).
\btitle{Universality of the mean-field for the {P}otts model}.
\bjournal{Probab. Theory Related Fields}
\bvolume{168}
\bpages{557--600}.
\bdoi{10.1007/s00440-016-0718-0}
\bmrnumber{3663625}
\end{barticle}
\endbibitem

\bibitem[\protect\citeauthoryear{Berthet, Rigollet and
  Srivastava}{2019}]{Berthet2019}
\begin{barticle}[author]
\bauthor{\bsnm{Berthet},~\bfnm{Quentin}\binits{Q.}},
  \bauthor{\bsnm{Rigollet},~\bfnm{Philippe}\binits{P.}} \AND
  \bauthor{\bsnm{Srivastava},~\bfnm{Piyush}\binits{P.}}
(\byear{2019}).
\btitle{Exact recovery in the {I}sing blockmodel}.
\bjournal{Ann. Statist.}
\bvolume{47}
\bpages{1805--1834}.
\bdoi{10.1214/17-AOS1620}
\bmrnumber{3953436}
\end{barticle}
\endbibitem

\bibitem[\protect\citeauthoryear{Borgs et~al.}{2008}]{BorgsdenseI}
\begin{barticle}[author]
\bauthor{\bsnm{Borgs},~\bfnm{C.}\binits{C.}},
  \bauthor{\bsnm{Chayes},~\bfnm{J.~T.}\binits{J.~T.}},
  \bauthor{\bsnm{Lov\'{a}sz},~\bfnm{L.}\binits{L.}},
  \bauthor{\bsnm{S\'{o}s},~\bfnm{V.~T.}\binits{V.~T.}} \AND
  \bauthor{\bsnm{Vesztergombi},~\bfnm{K.}\binits{K.}}
(\byear{2008}).
\btitle{Convergent sequences of dense graphs. {I}. {S}ubgraph frequencies,
  metric properties and testing}.
\bjournal{Adv. Math.}
\bvolume{219}
\bpages{1801--1851}.
\bdoi{10.1016/j.aim.2008.07.008}
\bmrnumber{2455626}
\end{barticle}
\endbibitem

\bibitem[\protect\citeauthoryear{Borgs et~al.}{2012}]{BorgsdenseII}
\begin{barticle}[author]
\bauthor{\bsnm{Borgs},~\bfnm{C.}\binits{C.}},
  \bauthor{\bsnm{Chayes},~\bfnm{J.~T.}\binits{J.~T.}},
  \bauthor{\bsnm{Lov\'{a}sz},~\bfnm{L.}\binits{L.}},
  \bauthor{\bsnm{S\'{o}s},~\bfnm{V.~T.}\binits{V.~T.}} \AND
  \bauthor{\bsnm{Vesztergombi},~\bfnm{K.}\binits{K.}}
(\byear{2012}).
\btitle{Convergent sequences of dense graphs {II}. {M}ultiway cuts and
  statistical physics}.
\bjournal{Ann. of Math. (2)}
\bvolume{176}
\bpages{151--219}.
\bdoi{10.4007/annals.2012.176.1.2}
\bmrnumber{2925382}
\end{barticle}
\endbibitem

\bibitem[\protect\citeauthoryear{Borgs et~al.}{2018}]{BorgsLPI}
\begin{barticle}[author]
\bauthor{\bsnm{Borgs},~\bfnm{Christian}\binits{C.}},
  \bauthor{\bsnm{Chayes},~\bfnm{Jennifer~T.}\binits{J.~T.}},
  \bauthor{\bsnm{Cohn},~\bfnm{Henry}\binits{H.}} \AND
  \bauthor{\bsnm{Zhao},~\bfnm{Yufei}\binits{Y.}}
(\byear{2018}).
\btitle{An {$L^p$} theory of sparse graph convergence {II}: {LD} convergence,
  quotients and right convergence}.
\bjournal{Ann. Probab.}
\bvolume{46}
\bpages{337--396}.
\bdoi{10.1214/17-AOP1187}
\bmrnumber{3758733}
\end{barticle}
\endbibitem

\bibitem[\protect\citeauthoryear{Borgs et~al.}{2019}]{BorgsLPII}
\begin{barticle}[author]
\bauthor{\bsnm{Borgs},~\bfnm{Christian}\binits{C.}},
  \bauthor{\bsnm{Chayes},~\bfnm{Jennifer~T.}\binits{J.~T.}},
  \bauthor{\bsnm{Cohn},~\bfnm{Henry}\binits{H.}} \AND
  \bauthor{\bsnm{Zhao},~\bfnm{Yufei}\binits{Y.}}
(\byear{2019}).
\btitle{An {$L^p$} theory of sparse graph convergence {I}: {L}imits, sparse
  random graph models, and power law distributions}.
\bjournal{Trans. Amer. Math. Soc.}
\bvolume{372}
\bpages{3019--3062}.
\bdoi{10.1090/tran/7543}
\bmrnumber{3988601}
\end{barticle}
\endbibitem

\bibitem[\protect\citeauthoryear{Bresler and Nagaraj}{2019}]{Bresler2019}
\begin{barticle}[author]
\bauthor{\bsnm{Bresler},~\bfnm{Guy}\binits{G.}} \AND
  \bauthor{\bsnm{Nagaraj},~\bfnm{Dheeraj}\binits{D.}}
(\byear{2019}).
\btitle{Stein's method for stationary distributions of {M}arkov chains and
  application to {I}sing models}.
\bjournal{Ann. Appl. Probab.}
\bvolume{29}
\bpages{3230--3265}.
\bdoi{10.1214/19-AAP1479}
\bmrnumber{4019887}
\end{barticle}
\endbibitem

\bibitem[\protect\citeauthoryear{Broder and Shamir}{1987}]{broder1987second}
\begin{binproceedings}[author]
\bauthor{\bsnm{Broder},~\bfnm{Andrei}\binits{A.}} \AND
  \bauthor{\bsnm{Shamir},~\bfnm{Eli}\binits{E.}}
(\byear{1987}).
\btitle{On the second eigenvalue of random regular graphs}.
In \bbooktitle{28th Annual Symposium on Foundations of Computer Science (sfcs
  1987)}
\bpages{286--294}.
\bpublisher{IEEE}.
\end{binproceedings}
\endbibitem

\bibitem[\protect\citeauthoryear{Chatterjee}{2005}]{Cha2005}
\begin{bbook}[author]
\bauthor{\bsnm{Chatterjee},~\bfnm{Sourav}\binits{S.}}
(\byear{2005}).
\btitle{Concentration inequalities with exchangeable pairs}.
\bpublisher{ProQuest LLC, Ann Arbor, MI}
\bnote{Thesis (Ph.D.)--Stanford University}.
\bmrnumber{2707160}
\end{bbook}
\endbibitem

\bibitem[\protect\citeauthoryear{Chatterjee and Dembo}{2016}]{ChaDembo2016}
\begin{barticle}[author]
\bauthor{\bsnm{Chatterjee},~\bfnm{Sourav}\binits{S.}} \AND
  \bauthor{\bsnm{Dembo},~\bfnm{Amir}\binits{A.}}
(\byear{2016}).
\btitle{Nonlinear large deviations}.
\bjournal{Adv. Math.}
\bvolume{299}
\bpages{396--450}.
\bdoi{10.1016/j.aim.2016.05.017}
\bmrnumber{3519474}
\end{barticle}
\endbibitem

\bibitem[\protect\citeauthoryear{Chatterjee and Shao}{2011}]{Cha2011}
\begin{barticle}[author]
\bauthor{\bsnm{Chatterjee},~\bfnm{Sourav}\binits{S.}} \AND
  \bauthor{\bsnm{Shao},~\bfnm{Qi-Man}\binits{Q.-M.}}
(\byear{2011}).
\btitle{Nonnormal approximation by {S}tein's method of exchangeable pairs with
  application to the {C}urie-{W}eiss model}.
\bjournal{Ann. Appl. Probab.}
\bvolume{21}
\bpages{464--483}.
\bdoi{10.1214/10-AAP712}
\bmrnumber{2807964}
\end{barticle}
\endbibitem

\bibitem[\protect\citeauthoryear{Chuang and Omidi}{2009}]{Chuang2009}
\begin{barticle}[author]
\bauthor{\bsnm{Chuang},~\bfnm{H.}\binits{H.}} \AND
  \bauthor{\bsnm{Omidi},~\bfnm{G.~R.}\binits{G.~R.}}
(\byear{2009}).
\btitle{Graphs with three distinct eigenvalues and largest eigenvalues less
  than 8}.
\bjournal{Linear Algebra Appl.}
\bvolume{430}
\bpages{2053--2062}.
\bdoi{10.1016/j.laa.2008.11.028}
\bmrnumber{2503952}
\end{barticle}
\endbibitem

\bibitem[\protect\citeauthoryear{Comets and Gidas}{1991}]{Comets1991}
\begin{barticle}[author]
\bauthor{\bsnm{Comets},~\bfnm{Francis}\binits{F.}} \AND
  \bauthor{\bsnm{Gidas},~\bfnm{Basilis}\binits{B.}}
(\byear{1991}).
\btitle{Asymptotics of maximum likelihood estimators for the {C}urie-{W}eiss
  model}.
\bjournal{Ann. Statist.}
\bvolume{19}
\bpages{557--578}.
\bdoi{10.1214/aos/1176348111}
\bmrnumber{1105836}
\end{barticle}
\endbibitem

\bibitem[\protect\citeauthoryear{Dembo and Montanari}{2010}]{AmirAndrea2010}
\begin{barticle}[author]
\bauthor{\bsnm{Dembo},~\bfnm{Amir}\binits{A.}} \AND
  \bauthor{\bsnm{Montanari},~\bfnm{Andrea}\binits{A.}}
(\byear{2010}).
\btitle{Gibbs measures and phase transitions on sparse random graphs}.
\bjournal{Braz. J. Probab. Stat.}
\bvolume{24}
\bpages{137--211}.
\bdoi{10.1214/09-BJPS027}
\bmrnumber{2643563}
\end{barticle}
\endbibitem

\bibitem[\protect\citeauthoryear{Deshpande
  et~al.}{2018}]{deshpande2018contextual}
\begin{binproceedings}[author]
\bauthor{\bsnm{Deshpande},~\bfnm{Yash}\binits{Y.}},
  \bauthor{\bsnm{Sen},~\bfnm{Subhabrata}\binits{S.}},
  \bauthor{\bsnm{Montanari},~\bfnm{Andrea}\binits{A.}} \AND
  \bauthor{\bsnm{Mossel},~\bfnm{Elchanan}\binits{E.}}
(\byear{2018}).
\btitle{Contextual stochastic block models}.
In \bbooktitle{Advances in Neural Information Processing Systems}
\bpages{8581--8593}.
\end{binproceedings}
\endbibitem

\bibitem[\protect\citeauthoryear{Eichelsbacher and L\"{o}we}{2010}]{Peter2010}
\begin{barticle}[author]
\bauthor{\bsnm{Eichelsbacher},~\bfnm{Peter}\binits{P.}} \AND
  \bauthor{\bsnm{L\"{o}we},~\bfnm{Matthias}\binits{M.}}
(\byear{2010}).
\btitle{Stein's method for dependent random variables occurring in statistical
  mechanics}.
\bjournal{Electron. J. Probab.}
\bvolume{15}
\bpages{no. 30, 962--988}.
\bdoi{10.1214/EJP.v15-777}
\bmrnumber{2659754}
\end{barticle}
\endbibitem

\bibitem[\protect\citeauthoryear{Eldan}{2018}]{eldan2018taming}
\begin{barticle}[author]
\bauthor{\bsnm{Eldan},~\bfnm{Ronen}\binits{R.}}
(\byear{2018}).
\btitle{Taming correlations through entropy-efficient measure decompositions
  with applications to mean-field approximation}.
\bjournal{Probability Theory and Related Fields}
\bpages{1--19}.
\end{barticle}
\endbibitem

\bibitem[\protect\citeauthoryear{Ellis and Newman}{1978}]{Ellis1978}
\begin{barticle}[author]
\bauthor{\bsnm{Ellis},~\bfnm{Richard~S.}\binits{R.~S.}} \AND
  \bauthor{\bsnm{Newman},~\bfnm{Charles~M.}\binits{C.~M.}}
(\byear{1978}).
\btitle{The statistics of {C}urie-{W}eiss models}.
\bjournal{J. Statist. Phys.}
\bvolume{19}
\bpages{149--161}.
\bdoi{10.1007/BF01012508}
\bmrnumber{0503332}
\end{barticle}
\endbibitem

\bibitem[\protect\citeauthoryear{Feige and Ofek}{2005}]{Feige2005}
\begin{barticle}[author]
\bauthor{\bsnm{Feige},~\bfnm{Uriel}\binits{U.}} \AND
  \bauthor{\bsnm{Ofek},~\bfnm{Eran}\binits{E.}}
(\byear{2005}).
\btitle{Spectral techniques applied to sparse random graphs}.
\bjournal{Random Structures Algorithms}
\bvolume{27}
\bpages{251--275}.
\bdoi{10.1002/rsa.20089}
\bmrnumber{2155709}
\end{barticle}
\endbibitem

\bibitem[\protect\citeauthoryear{Gheissari, Lubetzky and
  Peres}{2018}]{Gheissari2018}
\begin{barticle}[author]
\bauthor{\bsnm{Gheissari},~\bfnm{Reza}\binits{R.}},
  \bauthor{\bsnm{Lubetzky},~\bfnm{Eyal}\binits{E.}} \AND
  \bauthor{\bsnm{Peres},~\bfnm{Yuval}\binits{Y.}}
(\byear{2018}).
\btitle{Concentration inequalities for polynomials of contracting {I}sing
  models}.
\bjournal{Electron. Commun. Probab.}
\bvolume{23}
\bpages{Paper No. 76, 12}.
\bdoi{10.1214/18-ECP173}
\bmrnumber{3873783}
\end{barticle}
\endbibitem

\bibitem[\protect\citeauthoryear{Giardin\`a
  et~al.}{2016}]{giardina2015annealed}
\begin{barticle}[author]
\bauthor{\bsnm{Giardin\`a},~\bfnm{C.}\binits{C.}},
  \bauthor{\bsnm{Giberti},~\bfnm{C.}\binits{C.}}, \bauthor{\bparticle{van~der}
  \bsnm{Hofstad},~\bfnm{R.}\binits{R.}} \AND
  \bauthor{\bsnm{Prioriello},~\bfnm{M.~L.}\binits{M.~L.}}
(\byear{2016}).
\btitle{Annealed central limit theorems for the {I}sing model on random
  graphs}.
\bjournal{ALEA Lat. Am. J. Probab. Math. Stat.}
\bvolume{13}
\bpages{121--161}.
\bmrnumber{3476210}
\end{barticle}
\endbibitem

\bibitem[\protect\citeauthoryear{Jain, Koehler and
  Risteski}{2019}]{jain2019mean}
\begin{binproceedings}[author]
\bauthor{\bsnm{Jain},~\bfnm{Vishesh}\binits{V.}},
  \bauthor{\bsnm{Koehler},~\bfnm{Frederic}\binits{F.}} \AND
  \bauthor{\bsnm{Risteski},~\bfnm{Andrej}\binits{A.}}
(\byear{2019}).
\btitle{Mean-field approximation, convex hierarchies, and the optimality of
  correlation rounding: a unified perspective}.
In \bbooktitle{Proceedings of the 51st Annual ACM SIGACT Symposium on Theory of
  Computing}
\bpages{1226--1236}.
\end{binproceedings}
\endbibitem

\bibitem[\protect\citeauthoryear{Kabluchko, L\"{o}we and
  Schubert}{2019}]{Kabluchko2019fluctuations}
\begin{barticle}[author]
\bauthor{\bsnm{Kabluchko},~\bfnm{Zakhar}\binits{Z.}},
  \bauthor{\bsnm{L\"{o}we},~\bfnm{Matthias}\binits{M.}} \AND
  \bauthor{\bsnm{Schubert},~\bfnm{Kristina}\binits{K.}}
(\byear{2019}).
\btitle{Fluctuations of the magnetization for {I}sing models on dense
  {E}rd\H{o}s-{R}\'{e}nyi random graphs}.
\bjournal{J. Stat. Phys.}
\bvolume{177}
\bpages{78--94}.
\bdoi{10.1007/s10955-019-02358-5}
\bmrnumber{4003721}
\end{barticle}
\endbibitem

\bibitem[\protect\citeauthoryear{Kabluchko, L\"{o}we and
  Schubert}{2022}]{kabluchko2020low}
\begin{barticle}[author]
\bauthor{\bsnm{Kabluchko},~\bfnm{Zakhar}\binits{Z.}},
  \bauthor{\bsnm{L\"{o}we},~\bfnm{Matthias}\binits{M.}} \AND
  \bauthor{\bsnm{Schubert},~\bfnm{Kristina}\binits{K.}}
(\byear{2022}).
\btitle{Fluctuations of the magnetization for {I}sing models on
  {E}rd\H{o}s-{R}\'{e}nyi random graphs---the regimes of low temperature and
  external magnetic field}.
\bjournal{ALEA Lat. Am. J. Probab. Math. Stat.}
\bvolume{19}
\bpages{537--563}.
\bdoi{10.30757/alea.v19-21}
\bmrnumber{4394308}
\end{barticle}
\endbibitem

\bibitem[\protect\citeauthoryear{Kirsch and Toth}{2020}]{kirsch2020two}
\begin{barticle}[author]
\bauthor{\bsnm{Kirsch},~\bfnm{Werner}\binits{W.}} \AND
  \bauthor{\bsnm{Toth},~\bfnm{Gabor}\binits{G.}}
(\byear{2020}).
\btitle{Two groups in a Curie--Weiss model with heterogeneous coupling}.
\bjournal{Journal of Theoretical Probability}
\bvolume{33}
\bpages{2001--2026}.
\end{barticle}
\endbibitem

\bibitem[\protect\citeauthoryear{Liu}{2017}]{liu2017log}
\begin{barticle}[author]
\bauthor{\bsnm{Liu},~\bfnm{Lu}\binits{L.}}
(\byear{2017}).
\btitle{On the Log Partition Function of Ising Model on Stochastic Block
  Model}.
\bjournal{arXiv preprint arXiv:1710.05287}.
\end{barticle}
\endbibitem

\bibitem[\protect\citeauthoryear{Lov\'{a}sz}{2012}]{Lovasz2012}
\begin{bbook}[author]
\bauthor{\bsnm{Lov\'{a}sz},~\bfnm{L\'{a}szl\'{o}}\binits{L.}}
(\byear{2012}).
\btitle{Large networks and graph limits}.
\bseries{American Mathematical Society Colloquium Publications}
\bvolume{60}.
\bpublisher{American Mathematical Society, Providence, RI}.
\bdoi{10.1090/coll/060}
\bmrnumber{3012035}
\end{bbook}
\endbibitem

\bibitem[\protect\citeauthoryear{L\"{o}we and Schubert}{2018}]{Lowe2018}
\begin{barticle}[author]
\bauthor{\bsnm{L\"{o}we},~\bfnm{Matthias}\binits{M.}} \AND
  \bauthor{\bsnm{Schubert},~\bfnm{Kristina}\binits{K.}}
(\byear{2018}).
\btitle{Fluctuations for block spin {I}sing models}.
\bjournal{Electron. Commun. Probab.}
\bvolume{23}
\bpages{Paper No. 53, 12}.
\bdoi{10.1214/18-ECP161}
\bmrnumber{3852267}
\end{barticle}
\endbibitem

\bibitem[\protect\citeauthoryear{Mossel, Neeman and
  Sly}{2012}]{mossel2012stochastic}
\begin{barticle}[author]
\bauthor{\bsnm{Mossel},~\bfnm{Elchanan}\binits{E.}},
  \bauthor{\bsnm{Neeman},~\bfnm{Joe}\binits{J.}} \AND
  \bauthor{\bsnm{Sly},~\bfnm{Allan}\binits{A.}}
(\byear{2012}).
\btitle{Stochastic block models and reconstruction}.
\bjournal{arXiv preprint arXiv:1202.1499}.
\end{barticle}
\endbibitem

\bibitem[\protect\citeauthoryear{Mukherjee, Mukherjee and
  Yuan}{2018}]{Mukherjee2018}
\begin{barticle}[author]
\bauthor{\bsnm{Mukherjee},~\bfnm{Rajarshi}\binits{R.}},
  \bauthor{\bsnm{Mukherjee},~\bfnm{Sumit}\binits{S.}} \AND
  \bauthor{\bsnm{Yuan},~\bfnm{Ming}\binits{M.}}
(\byear{2018}).
\btitle{Global testing against sparse alternatives under {I}sing models}.
\bjournal{Ann. Statist.}
\bvolume{46}
\bpages{2062--2093}.
\bdoi{10.1214/17-AOS1612}
\bmrnumber{3845011}
\end{barticle}
\endbibitem

\bibitem[\protect\citeauthoryear{Mukherjee and
  Ray}{2022}]{mukherjee2019testing}
\begin{barticle}[author]
\bauthor{\bsnm{Mukherjee},~\bfnm{Rajarshi}\binits{R.}} \AND
  \bauthor{\bsnm{Ray},~\bfnm{Gourab}\binits{G.}}
(\byear{2022}).
\btitle{On testing for parameters in {I}sing models}.
\bjournal{Ann. Inst. Henri Poincar\'{e} Probab. Stat.}
\bvolume{58}
\bpages{164--187}.
\bdoi{10.1214/21-aihp1157}
\bmrnumber{4375621}
\end{barticle}
\endbibitem

\bibitem[\protect\citeauthoryear{Mukherjee and Xu}{2013}]{Manuel2020}
\begin{barticle}[author]
\bauthor{\bsnm{Mukherjee},~\bfnm{Sumit}\binits{S.}} \AND
  \bauthor{\bsnm{Xu},~\bfnm{Yuanzhe}\binits{Y.}}
(\byear{2013}).
\btitle{Statistics of the two-star ERGM}.
\bjournal{arXiv preprint arXiv:1310.4526}.
\end{barticle}
\endbibitem

\bibitem[\protect\citeauthoryear{{Ostrovsky} and
  {Sirota}}{2014}]{ostrovsky2014exact}
\begin{barticle}[author]
\bauthor{\bsnm{{Ostrovsky}},~\bfnm{Eugene}\binits{E.}} \AND
  \bauthor{\bsnm{{Sirota}},~\bfnm{Leonid}\binits{L.}}
(\byear{2014}).
\btitle{{Exact value for subgaussian norm of centered indicator random
  variable}}.
\bjournal{arXiv e-prints}
\bpages{arXiv:1405.6749}.
\end{barticle}
\endbibitem

\bibitem[\protect\citeauthoryear{Ravikumar, Wainwright and
  Lafferty}{2010}]{Ravikumar2010}
\begin{barticle}[author]
\bauthor{\bsnm{Ravikumar},~\bfnm{Pradeep}\binits{P.}},
  \bauthor{\bsnm{Wainwright},~\bfnm{Martin~J.}\binits{M.~J.}} \AND
  \bauthor{\bsnm{Lafferty},~\bfnm{John~D.}\binits{J.~D.}}
(\byear{2010}).
\btitle{High-dimensional {I}sing model selection using {$\ell_1$}-regularized
  logistic regression}.
\bjournal{Ann. Statist.}
\bvolume{38}
\bpages{1287--1319}.
\bdoi{10.1214/09-AOS691}
\bmrnumber{2662343}
\end{barticle}
\endbibitem

\bibitem[\protect\citeauthoryear{Sly and Sun}{2014}]{Sly2014}
\begin{barticle}[author]
\bauthor{\bsnm{Sly},~\bfnm{Allan}\binits{A.}} \AND
  \bauthor{\bsnm{Sun},~\bfnm{Nike}\binits{N.}}
(\byear{2014}).
\btitle{Counting in two-spin models on {$d$}-regular graphs}.
\bjournal{Ann. Probab.}
\bvolume{42}
\bpages{2383--2416}.
\bdoi{10.1214/13-AOP888}
\bmrnumber{3265170}
\end{barticle}
\endbibitem

\end{thebibliography}

\end{document}